\documentclass[a4paper,12pt]{article}
\usepackage{latexsym}
\usepackage{graphics} % for pdf, bitmapped graphics files
\usepackage{epsfig} % for postscript graphics files
\usepackage{amsmath} %for mathbb
\usepackage{amsfonts, amssymb} %for mathbb
\usepackage{amsthm,enumerate}
\usepackage{color}
\usepackage{mathrsfs}

\theoremstyle{theorem}
\newtheorem{thm}{Theorem}[section]
\newtheorem{prop}[thm]{Proposition}
\newtheorem{lemma}[thm]{Lemma}
\newtheorem{coro}[thm]{Corollary}

\theoremstyle{definition}

\newtheorem{rem}[thm]{Remark}

\newtheorem{ass}[thm]{Assumption}

\numberwithin{equation}{section}

\newcommand{\bi}[1]{\ensuremath{\boldsymbol{#1}}}   %%% ボールドイタリック %%%%

\begin{document}
\title{\LARGE{\bf A Discrete-Time Clark-Ocone Formula  
and its Application to an Error Analysis 
}}
\author{
Jir\^o Akahori\footnote{This work was supported by JSPS KAKENHI Grant Numbers 
23330109, 24340022, 23654056 and 25285102.},
Takafumi Amaba\footnote{This work was supported by JSPS KAKENHI Grant Number 24$\cdot$5772}
and Kaori Okuma \\
%Kaori Okuma \\
\\[8pt]
Department of Mathematical Sciences，Ritsumeikan University \\
1-1-1 Nojihigashi，Kusatsu，Shiga, 525-8577，Japan \\
%E-mail: kaori.okuma@gmail.com
}

\date{June 2013}
\maketitle
%\thispagestyle{empty}
%ABSTRACT
\abstract{
In this paper, we will establish 
a discrete-time version of Clark(-Ocone-Haussmann) formula, which can be 
seen as an asymptotic expansion in a weak sense.  
The formula is 
applied to the estimation of the error caused by the 
martingale representation.  
In the way, we use another distribution theory 
with respect to Gaussian rather than Lebesgue measure, 
which can be seen as a discrete Malliavin calculus.  
}
%\tableofcontents

\section{Introduction}
Let $ T > 0 $, $(W_t)_{0 \leq t \leq T}) $ be a Brownian motion
starting from $ 0 $, and 
$ (\mathcal{F}_t)_{0 \leq t \leq T} $ be its natural filtration. 
Let $ X \in L^2 (\mathcal{F}_T) $ be differentiable 
in the sense of Malliavin, for which we may write 
$ X \in \mathbb{D}_{2,1}$ (see e.g.\cite{IW}).
Then, it holds that 
\begin{equation}\label{Th:CO}
X=E[X]+\int_{0}^{T} E[D_{s}X|{\mathcal{F}}_{s}]dW_s,
\end{equation}
where $D_{s}$ means the Malliavin derivative (evaluated at $ s $). 

The formula (\ref{Th:CO})
is known as Clark-Ocone formula  
though there are many variants; Clark \cite{clark} obtained (\ref{Th:CO})
for some Fr\'echet differentiable functionals and 
Ocone \cite{ocone} %,\cite{ocone karatzas} 
related it to Malliavin derivatives, while 
Haussmann \cite{Haussmann2} extended it to functionals of a solution
to a stochastic differential equation. 
There are yet much more contexts, which we omit here. 

In the context of mathematical finance, 
the formula gives an alternative description of 
the hedging portfolio in terms of Malliavin derivatives. 
However, explicit expressions of 
the Malliavin derivatives 
of a Wiener functional are not available in general 
(except for some special cases: see \cite{jean}).
In the paper we will introduce a {\em finite dimensional approximation} of
(\ref{Th:CO}) and discuss the ``order of the convergence"
in a finance-oriented mode\footnote{Actually, this kind of finite-dimensional 
approximation or something similar is commonly used in financial practice. 
Hence the results presented in this paper 
might be more insightful and useful for the practitioners in the field.}. 

Let us be more precise. 
Put $ \Delta W_k =W_{k \Delta t} - W_{(k-1)\Delta t} $
for $ k \in \mathbb{N} $, where $ \Delta t $ %$ T> 0 $ 
is a fixed constant. 
Then, for fixed $ n $, the random variable 
$ (\Delta W_1,\cdots, \Delta W_n) $  
is distributed as $ N (0, \Delta t I ) $. 
Let $ \mathcal{G}_k $, $ k=1, \cdots N $,
be the sigma-algebra generated by
$ (\Delta W_1, \cdots, \Delta W_k) $. 
Note that $ \mathbb{G}:=\{ \mathcal{G}_k \}_{k=0}^{N} $ 
is a filtration, and 
\begin{equation*}
L^2 (\mathcal{G}_{N} , P) \simeq L^{2}(\mathbb{R}^{N},\mu ^{N}), 
\end{equation*}
where 
\begin{equation*}
\mu^{N}(dx) = \frac{1}{{(2\pi \Delta t)}^{\frac{N}{2}}}
e^{-\frac{{\left|x\right|}^{2}}{2\Delta t}}dx. 
\end{equation*}

With the filtration $ \mathbb{G} $, we can discuss 
``stochastic integral" (which is in fact a Riemannian sum)
with respect to 
the process (random walk) $ W^{\Delta t} = \sum \Delta W $.
On the other hand, we can naturally define (a precise formulation
will  be given in section \ref{GWF}) 
a finite dimensional version of the Malliavin derivative $ D_s $
by the weak partial derivatives such as 
\begin{equation*}
\partial_l X ( x_1 , \cdots , x_N ) |_{x_k = \Delta W_k, k=1, \cdots, N}. 
\end{equation*}
Then one might well guess that 
a discrete version of the Clark-Ocone formula could be
\begin{equation*}
X \overset{?}{=} \mathbf{E} [X] + \sum_{l=1}^{N} 
\mathbf{E} [\partial_l X |\mathcal{G}_{l-1}] \Delta W_l %^{\Delta t}
\end{equation*}
but this is not true since the random walk $ W^{\Delta t} $
does not have the martingale representation property\footnote{
If the martingale representation property holds for a random walk, 
then we can establish a precise discrete-time Clark-Ocone formula
if we define ``differentiation" properly. For the binary case, 
N. Privault \cite{Privault1} has made a detailed study on 
the discrete Clark-Ocone formula and related discrete 
Malliavin calculus. }. 

We should instead ask how much the (martingale representation) error,  
\begin{equation*}
\mathrm{Mart.Err}:=  
X -\mathbf{E} [X] - \sum_{l=1}^{N} \mathbf{E} [\partial_l X |\mathcal{G}_{l-1}] \Delta W_l, 
\end{equation*}
measured by a norm, 
(in this paper we concentrate on the estimation 
with respect to $ L^2 (\mathbb{R}^{N}) $-one), is. 
Further, %in view of applications to finance, we also study 
its asymptotic behaviour as $ N \to \infty $ with $ N \Delta t = \text{time horizon $T$} $. 
This is closely related to the problem of so-called {\em tracking error 
of the delta hedge}. 
If one has a nice finite dimensional approximation $ X^{N} $ of 
a Wiener functional $ X $, both defined on the same probability space, 
then the tracking error can be controlled 
by the (supremum in $ N $ of) $ \mathrm{Mart.Err} $ 
plus the error caused by the discretization (finite-dimensional approximation)
as we see from
%\footnote{The decomposition can also be 
%applied to the error caused by some discretization schemes of Backward differential equations 
%(\cite{Zhang}, \cite{Bouchard-Touzi}, \cite{Gobet-Labar}
%\cite{Gobet-Azmi1}, etc), and thus the topic 
%is also closely related to our ``discretization of Clark formula".
%The connection will be separately studied in another paper.}
:
\begin{equation*}
\begin{split}
\mathrm{Tra.Err}&:= 
X - \mathbf{E} [X] - \sum_{l=1}^{N} \mathbf{E} [\partial_{l \Delta t}
X |\mathcal{F}_{l\Delta t} ] \Delta W_l \\
%\right\Vert_{L^2 (P)}
& = X- X^{N} + \mathbf{E} [ X - X^{N}] \\
& - \sum_{l=1}^{N} \left( 
\mathbf{E} [D_{(l \Delta t)}
X |\mathcal{F}_{(l \Delta t)} ] 
-\mathbf{E} [\partial_l X^{N} |\mathcal{G}_{l-1}] \right)
\Delta W_l + \mathrm{Mart.Err} \\
&=: \mathrm{Disc. Err} + \mathrm{Mart.Err}. 
\end{split}
\end{equation*}

There are considerably many studies on the subject of the tracking error
as well. 
It at least dates back to the paper by Rootzen \cite{Roo}, 
where the weak convergence of the scaled error was studied.    
The problem is reformulated as ``tracking error of the delta hedge" 
in Bertsimas, Kogan, and Lo \cite{BKL}, where the error was also 
measured by $ L^2 $-norm. Hayashi and Mykland \cite{Hayashi-Mykland} further developed the argument from financial perspectives. 

%Roughly speaking, the known 
Notable results in this topic are summarized as follows. 
\begin{enumerate}
\item The scaled tracking error $ N^{-1/2} \mathrm{Tra.Err} $ converges
weakly to $ B_\tau $ with\footnote{Here actually the differentiability is not required. The expression 
$ \mathbf{E}[D_s X|\mathcal{F}_s] $ should be understood as 
simply the integrand of the martingale representation of $ X $ and 
the meaning of $ \mathbf{E}[D^2_s X|\mathcal{F}_s] $ will be clarified later.}
$$ \tau = \frac{1}{2} \int |\mathbf{E}[D^2_s X|\mathcal{F}_s]|^2 ds, $$
where $ B $ is a Brownian motion independent of $ \tau $. 

\item The tracking error estimated with $ L^2 $-norm is in $ O ( N^{-1/2} ) $
in the cases of $ X = F (S) $ with ``ordinary pay-off" $ F $ and
the solution $ S $ of an SDE, while it is in $ O ( N^{-1/4} ) $ when
$ F $ is ``irregular"like Heaviside function (Gobet and Temam \cite{Gobet-Tenam}, Temam \cite{Temam}). 
Later the irregularity is associated with differentiability in the {\em fractional order} $ s \in (0,1)$ by Geiss and Geiss \cite{GeGe};
it is in $ O ( N^{-s/2} ) $ for $ s $-differentiable $ F $. 

%\item Geiss and Geiss \cite{GeGe} further revealed that 
%the order can be improved by taking proper time discretization 
%which is not necessarily uniform. 
%Actually, it can be always made $ O ( N^{-1/2} ) $-convergent. (嘘かも)
\end{enumerate}

In this paper, we shall establish the corresponding results 
for the $ \mathrm{Mart.Err} $, which %are in fact %completely 
almost parallel with the above.  

After introducing the Discrete Clark-Ocone formula (Theorem \ref{ACO2},
section \ref{MW}), we will show, by using the formula, 
a multi-level central limit theorem 
for the error (Theorem \ref{WEAK}). This corresponds to the result 1 above. 
Since we will be working on a \underline{sequence}
of {\em discrete} Wiener functionals 
unlike the situations concerning tracking error, 
we need to some discussions on the finite-dimensionality. 
An answer is given in section \ref{ASYMP}, and under the condition
it is proven that the convergence order is related to a
fractional smoothness(Theorem \ref{ErrEst}). 
This corresponds to the result 2 above. 
Section \ref{ADDsec} is devoted to a study of 
the asymptotics of the error of the additive functionals. 
As a case study, we give a detailed estimate of the 
martingale representation error of the Riemann-sum approximation 
of Brownian occupation time (Theorem \ref{Occ}). 

The proofs given in this paper is largely based on elementary calculus 
with a bit of 
classical Fourier analysis and distribution theory, but nonetheless 
our methods can be, in spirit, a finite-dimensional reduction of 
Malliavin-Watanabe's distribution
theory. Some detailed discussions on this point of view will be given
in sections \ref{GWF}, \ref{Comm}, and \ref{Consis}. 
%(see e.g. \cite[Chapter V-8]{IW}). 
%as we shall see in section \ref{MW}. 
We have restricted ourselves to one-dimensional Wiener space case, 
but this is only for simplicity for the notations. 

\section{A Discrete Version of Clark-Ocone Formula}
\subsection{Generalized Wiener Functional in Discrete Time}\label{GWF}
Throughout this section we fix $ N \in \mathbb{N} $ and 
work on the canonical probability space 
$ (\mathbb{R}^{N}, \mathfrak{B}(\mathbb{R}^{N}), \mu^{N} ) $
though we will abuse the notations like $ \Delta W $ 
as the coordinate map. 

Let $ \mathcal{S}_{N} \equiv \mathcal{S} (\mathbb{R}^{N} ) $ be 
the Schwartz space; the space of all rapidly decreasing functions
and $ \mathcal{S}'_{N} $ be its dual; the space of all tempered distributions
(see, e.g. \cite{Ru}). We (may) call $ X \in \mathcal{S}'_{N} $
a ``discrete generalized Wiener functional" 
and its generalized expectation 
is defined to be the coupling 
$ {}_{\mathcal{S}'_{N}} \langle X, p^{N} \rangle_{\mathcal{S}_{N}} $,
where $ p^{N} $ is the density of $ \mu^{N} $,
which is of course in $ \mathcal{S}_{N} $. 

The conditional expectation $ \mathbf{E} [X|\mathcal{G}_k] $
for $ X \in \mathcal{S}'_{N} $ is then defined as follows.
We first note that the inclusion 
$ \mathcal{G}_k \subset \mathcal{G}_{N} $ 
induces those of $ \mathcal{S} (\mathbb{R}^k) \
\subset \mathcal{S} (\mathbb{R}^{N} ) $ and
$ \mathcal{S}' (\mathbb{R}^k) \subset \mathcal{S}' (\mathbb{R}^{N} ) $. 
In this sense we write $ \mathcal{S}_k $ and $ \mathcal{S}'_k $
for the Schwartz space and the space of generalized
Wiener functionals with respect to $ \mathcal{G}_k $, $k=1, \cdots, N $.
Then 
$ Y =  \mathbf{E} [X|\mathcal{G}_k] $ in $ \mathcal{S}'_k $ is
defined in terms of the relation
\begin{equation*}
\mathbf{E} [ X Z ] = \mathbf{E}[ Y Z ],\quad \forall Z \in \mathcal{S}_k, 
\end{equation*}
which should be understood as 
\begin{equation*}
{}_{ \mathcal{S}'_{N} } \langle X, Z p^{N} \rangle_{ \mathcal{S}_{N} } 
= {}_{\mathcal{S}'_k} \langle Y, Z p^k 
\rangle_{\mathcal{S}_k},\quad \forall Z \in \mathcal{S}_k.
\end{equation*}
In particular, we see that the conditional expectation is well-defined
by du Bois-Reymond lemma (see e.g. \cite{Ru}). 
Note that this generalized conditional expectation reduces
to the standard one on $ L^1 ( \mu^{N} ) $, 
which is included in $ \mathcal{S}'_{N} $ unlike the $ L^1 $ space
with respect to the Lebesgue measure.
Furthermore, differentiations of $ X \in \mathcal{S}'_{N} $ 
are defined as usual, namely, 
\begin{equation*}
\partial_k X = Y \iff {}_{ \mathcal{S}'_{N} } 
\langle Y, Z \rangle_{ \mathcal{S}_{N} } 
= -  {}_{ \mathcal{S}'_{N} } 
\langle X, \partial_k Z \rangle_{ \mathcal{S}_{N} }
\quad \forall Z \in \mathcal{S}_{N},
\end{equation*}
which imply
\begin{equation*}
E [\partial_k X] = E [X \partial_k \log p^N ],
\end{equation*}
and so on.

\subsection{Clark-Ocone Formula in Discrete Time}\label{MW}
We have the following series expansion in $ \Delta t $:
\begin{thm}[A Discrete Version of Clark-Ocone Formula]\label{ACO2}%%%%
For $ X \in L^2 ( \mathcal{G}_{N} ) \simeq L^2 ( \mu^{N} ) $,
we have the following $ L^2 $-convergent series expansion:
\begin{equation}\label{AE1}
X - E[X]= \sum_{m=1}^\infty \sum_{l=1}^{N}
\frac{(\Delta t)^{m/2}}{\sqrt{m!}} 
\mathbf{E} [\partial^m_l X|\mathcal{G}_{l-1}] 
H_m \left(\frac{\Delta W_l}{\sqrt{\Delta t}}\right)
\end{equation}\label{Her}
where $ H_m $ is the $ m $-th Hermite polynomial for
$ m \in \mathbb{Z}_+ $; 
\begin{equation}
H_{m}(x)=\frac{(-1)^m}{\sqrt{m!}}e^{\frac{x^{2}}{2}}
\frac{d^{m}}{dx^{m}}e^{-\frac{x^{2}}{2}}\quad (m\in \mathbb{Z}_+). 
\end{equation}
Here the differentiations are understood in the distribution sense, 
as explained in the previous section. 
\end{thm}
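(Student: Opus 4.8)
The plan is to diagonalize the problem using the Wiener chaos (Hermite) decomposition of $L^2(\mu^N)$, which is the exact finite-dimensional analogue of the Wiener-It\^o chaos expansion, and then to recognize the claimed series as a rearrangement of that expansion ordered by the \emph{last} coordinate on which each chaos monomial depends. First I would recall that, since $\mu^N$ is the product of $N$ one-dimensional Gaussians of variance $\Delta t$, the normalized Hermite polynomials $\bigl\{\prod_{l=1}^N H_{m_l}(\Delta W_l/\sqrt{\Delta t})\bigr\}_{(m_1,\dots,m_N)\in\mathbb{Z}_+^N}$ form a complete orthonormal system in $L^2(\mu^N)$. Hence any $X\in L^2(\mu^N)$ has an $L^2$-convergent expansion $X=\sum_{\bi{m}} c_{\bi{m}}\prod_l H_{m_l}(\Delta W_l/\sqrt{\Delta t})$ with $c_{\bi{0}}=E[X]$, and $X-E[X]$ is the sum over $\bi{m}\neq\bi{0}$.

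The second step is bookkeeping: group the multi-indices $\bi{m}\neq\bi{0}$ by the largest index $l$ with $m_l\geq 1$ and by the value $m=m_l$ at that index. For a fixed such block, the partial sum is a function of $(\Delta W_1,\dots,\Delta W_l)$ that is a linear combination of $H_m(\Delta W_l/\sqrt{\Delta t})$ with coefficients depending only on $(\Delta W_1,\dots,\Delta W_{l-1})$; I claim that coefficient equals $\frac{(\Delta t)^{m/2}}{\sqrt{m!}}\,\mathbf{E}[\partial_l^m X\,|\,\mathcal{G}_{l-1}]$. To see this I would use two elementary facts about the one-dimensional Gaussian of variance $\Delta t$: (i) the derivative identity $\partial_x H_m(x/\sqrt{\Delta t}) = \sqrt{m/\Delta t}\,H_{m-1}(x/\sqrt{\Delta t})$, so applying $\partial_l^m$ to a Hermite monomial in $\Delta W_l$ of degree $\geq m$ lowers that degree by $m$, with the degree-$m$ term contributing the constant $\sqrt{m!/(\Delta t)^m}$; and (ii) taking $\mathbf{E}[\,\cdot\,|\mathcal{G}_{l-1}]$ kills every monomial whose $\Delta W_l$-degree (after differentiation) is positive, as well as every monomial depending on $\Delta W_{l+1},\dots,\Delta W_N$, by orthogonality of Hermite polynomials to constants. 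Combining (i) and (ii), $\mathbf{E}[\partial_l^m X|\mathcal{G}_{l-1}]$ extracts exactly the coefficient of the degree-$m$-in-$\Delta W_l$, degree-$0$-in-later-variables part of $X$, scaled by $\sqrt{m!/(\Delta t)^m}$, which is precisely what is needed. Because differentiation here is taken in $\mathcal{S}'_N$ as set up in Section~\ref{GWF}, and because Hermite expansions converge in $\mathcal{S}'$ as well as in $L^2$, the formal manipulations with $\partial_l^m$ and $\mathbf{E}[\,\cdot\,|\mathcal{G}_{l-1}]$ are legitimate termwise.

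The third step is to assemble the identity: summing the block contributions over all $l\in\{1,\dots,N\}$ and all $m\geq 1$ reproduces the sum over all $\bi{m}\neq\bi{0}$, giving \eqref{AE1}; and the $L^2$-convergence of the double series follows from Parseval, since the terms are mutually orthogonal in $L^2(\mu^N)$ (monomials in different blocks differ in some $m_l$, hence are orthogonal) and their squared norms sum to $\|X-E[X]\|_{L^2(\mu^N)}^2<\infty$.

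I expect the main obstacle to be the rigorous justification that the distributional derivatives $\partial_l^m X$ and the generalized conditional expectations behave as in the $L^2$ heuristic when $X$ is merely in $L^2(\mu^N)$ rather than smooth: one must check that $\mathbf{E}[\partial_l^m X|\mathcal{G}_{l-1}]$ is a well-defined element of $L^2(\mathcal{G}_{l-1})$ (not just of $\mathcal{S}'_{l-1}$) and that the chaos expansion may be differentiated term by term in $\mathcal{S}'_N$ with the partial sums still converging appropriately. This is handled by first proving the identity for $X$ in a dense nice subclass (finite Hermite expansions, or $\mathcal{S}_N$), where everything is classical, and then passing to the limit using the orthogonality/Parseval bound from the third step to control all the terms simultaneously; the continuity of $X\mapsto\mathbf{E}[\partial_l^m X|\mathcal{G}_{l-1}]$ in the relevant topologies is what makes the limit pass through.
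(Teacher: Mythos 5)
Your proposal is correct and follows essentially the same route as the paper: Hermite chaos expansion of $L^2(\mu^N)$, regrouping of the multi-indices by the last coordinate $l$ with $m_l\geq 1$, identification of each block coefficient with $\frac{(\Delta t)^{m/2}}{\sqrt{m!}}\mathbf{E}[\partial_l^m X|\mathcal{G}_{l-1}]$, and Parseval for the $L^2$-convergence. The only real difference is in how that coefficient identity is justified: the paper first identifies the block coefficient as $\mathbf{E}\bigl[X\,H_m\bigl(\Delta W_l/\sqrt{\Delta t}\bigr)\big|\mathcal{G}_{l-1}\bigr]$ by orthogonality and then converts it by a one-line distributional integration by parts, pairing $X$ against $H_m(x_l/\sqrt{\Delta t})\,f\,p^N=(-1)^m\frac{(\Delta t)^{m/2}}{\sqrt{m!}}f\,\partial_l^m p^N$ as in \eqref{DIBP}, which is valid for every $X\in L^2(\mu^N)$ at once and thereby dispenses with the termwise differentiation plus density/continuity argument that you flag as the main obstacle.
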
%%%%%%%%%%%%%%%%%%%%%%%%%%%%%%%%%%%%%%%%%%%%%%%%%%%%%%%%%%%%%
\begin{proof}%%%%%%%%%%%%%%%%%%%%%%%%%%%%%%%%%%%%%%%%%%%%%%%%%%%%%%%%%
Since 
$
\Big\{
\prod _{i=1}^{N} H_{{k}_{i}}(\frac{\Delta {W_{i}}}{\sqrt{\Delta t}})
\Big\} _{k_{1},\dots,k_{n}\in \mathbb{Z}_+}
$
is an orthonormal basis of $ L^{2} ( \mathbb{R}^{N}, \mu^{N} ) $, 
we have the following orthogonal expansion of 
$ X \in L^{2}(\mathbb{R}^{N}, \mu^{N}) $:
\begin{equation}\label{exp1}
X(\Delta W_{1},\dots,\Delta W_{N})\\
= \sum _{k_{1},\dots,k_{N}} %\langle X, \prod _{i=1}^{n}H_{{k}_{i}} \rangle 
c_{(k_{1},\dots,k_{N})}
\prod _{i=1}^{N}
H_{{k}_{i}}
\left( \frac{\Delta {W_{i}}}{\sqrt{\Delta t}} \right). 
\end{equation}
where we denote 
\begin{equation*}
c_{ (k_{1},\dots ,k_{N}) }
:=
\big\langle
	X,
	\prod _{i=1}^{N} H_{{k}_{i}} \left( \frac{ \Delta W_{i} }{\sqrt{\Delta t}} \right)
\big\rangle
=
\mathbf{E} \big[
	X
	\prod _{i=1}^{N} H_{{k}_{i}}
	\left( \frac{ \Delta W_{i} }{\sqrt{\Delta t}} \right)
\big].
\end{equation*}
Let us ``sort" the series according as the ``highest" non-zero $ k_i $;
\begin{multline}\label{PCO2}
X ( \Delta W_{1}, \dots , \Delta W_{N} ) \\
=E[X]+
%\sum _{l=1}^{n} \sum _{k_{1},\dots,k_{l-1}} c_{(k_{1},\dots,k_{l-1},1,0,\dots,0)} \prod _{i=1}^{l-1}H_{{k}_{i}}(\frac{\Delta {W_{i}}}{\sqrt{\Delta t}})\cdot
%H_{1}(\frac{\Delta {W_{l}}}{\sqrt{\Delta t}}) 
\sum _{l=1}^{N} \sum _{k_{1},\dots,k_{l-1}} \sum _{k_{l} \geq 1} c_{(k_{1},\dots , k_{l}, 0, \dots , 0)} 
\prod _{i=1}^{l}H_{{k}_{i}}
\left(\frac{\Delta {W_{i}}}{\sqrt{\Delta t}}\right).
\end{multline}

Here we claim that 
\begin{equation}\label{cond1}
\begin{split}
& \sum _{l=1}^{N} 
\sum _{k_{1},\dots,k_{l-1}} c_{(k_{1},\dots , k_{l}, 0, \dots , 0)} 
\prod _{i=1}^{l-1}H_{{k}_{i}}\left(\frac{\Delta {W_{i}}}{\sqrt{\Delta t}}
\right) \\
& \qquad \qquad 
= \mathbf{E} \left[ X H_{k_l} \left(\frac{\Delta {W_{i}}}{\sqrt{\Delta t}}
\right) \bigg| \mathcal{G}_{l-1}
\right]. 
\end{split}
\end{equation}
In fact, from the expansion (\ref{exp1}) we have 
\begin{equation*}
\begin{split}
&
\mathbf{E}[ X H_{k_l} \left( \frac{ \Delta W_{l} }{ \sqrt{\Delta t} } \right) |\mathcal{G}_{l-1}] \\
&=
\mathbf{E}\big[ \sum_{k'_{1}, \dots , k'_{N}}
c_{(k'_{1},\dots,k'_{N})}
H_{k_l} \left( \frac{ \Delta W_{l} }{ \sqrt{\Delta t} } \right)
\prod_{i=1}^{N} H_{ k_{i}^{\prime} } \left( \frac{ \Delta W_{i} }{ \sqrt{\Delta t} } \right)
\big| \mathcal{G}_{l-1} \big] \\
&= \sum_{k'_{1},\cdots, k'_N;} 
c_{(k'_{1},\dots,k'_{N}) }
\prod _{i=1}^{l-1}
H_{ k_{i}^{\prime} } \left( \frac{ \Delta W_{i} }{ \sqrt{\Delta t} } \right)
\mathbf{E} \big[
H_{k_l} \left( \frac{ \Delta W_{l} }{ \sqrt{\Delta t} } \right)
\prod _{i=l}^{N} H_{ k_{i}^{\prime} } \left( \frac{ \Delta W_{i} }{ \sqrt{\Delta t} } \right)
\big],
\end{split}
\end{equation*}
and we confirm the claim since 
$ 
\mathbf{E}[ 
H_{k_l} ( \frac{ \Delta W_{l} }{\sqrt{\Delta t}} )
\prod _{i=l}^{n} H_{ k_{i}^{\prime} } ( \frac{ \Delta W_{i} }{\sqrt{\Delta t}} )
]= 0
$ %\end{equation*}
unless $ k'_l= k_l $ and $ k'_i = 0 $ for $ i > l $. 

We further claim that
\begin{equation} \label{DIBP}
\mathbf{E} \left[ X H_{k_l} \left(\frac{\Delta {W_{i}}}{\sqrt{\Delta t}}
\right) \bigg| \mathcal{G}_{l-1}
\right] = \frac{(\Delta)^{k/2}}{\sqrt{k!}}
\mathbf{E} \left[ \partial_l^k X \big| \mathcal{G}_{l-1}
\right],
\end{equation}
which, together with (\ref{PCO2}) and (\ref{cond1}),
will prove the expansion (\ref{AE1}) in the $ L^2 $ case.
Here, 
the conditional expectation %in the right-hand-side 
should be understood in the generalized sense.
%but since the right-hand-side is the classical one, 
Following the definition we have made, it suffices to show that
\begin{equation*}
\mathbf{E}
\left[
	X
	H_{k_l} \left(\frac{\Delta {W_{l}}}{\sqrt{\Delta t}} \right)
	f ( \Delta W_1 , \cdots , \Delta W_{l-1} )
\right]
=
\frac{ (\Delta t)^{k/2} }{ \sqrt{k!} }
\mathbf{E}[ \partial_l^k X f ]
\end{equation*}
for any $ f \in \mathcal{S}_{l-1} $ but this is easy to see 
if we write down the generalized expectation as 
the coupling of $ \mathcal{S} $ and $ \mathcal{S}'$:
\begin{equation*}
\begin{split}
{}_{\mathcal{S}'} \langle X  , H_k ( x / \sqrt{\Delta t} ) f
p^{N} \rangle_{\mathcal{S}}
&= {}_{\mathcal{S}'} \langle X , 
f (-1)^k \frac{ (\Delta t)^{k/2} }{ \sqrt{k!} }
\partial_l^k p^{N} \rangle_{\mathcal{S}} \\
&= \frac{ (\Delta t)^{k/2} }{ \sqrt{k!} }
{}_{\mathcal{S}'} \langle \partial
^k_l X, f p^{N} \rangle_{\mathcal{S}}.
\end{split}
\end{equation*} 
\end{proof}%%%%%%%%%%%%%%%%%%%%%%%%%%%%%%%%%%%%%%%%%%%%%%%%%%%%%%%%%%%

%%%%%%%%%%%%%%%%%%%%%%%%%%%%%%%%%%%%%%%%%%%%%%%%%%%%%%%%%%%%%%%%%%%%%%
\subsection{Comment on Discrete Generalized Wiener Functionals}\label{Comm}
%%%%%%%%%%%%%%%%%%%%%%%%%%%%%%%%%%%%%%%%%%%%%%%%%%%%%%%%%%%%%%%%%%%%%%
In this subsection, we remark that our discrete generalized 
Wiener functionals is slightly broader than that of the direct 
finite dimensional reduction; %of the generalized Wiener functionals
there is a gap. For simplicity, we let $ \Delta t = 1$ in this subsection. 

We know that (see e.g. \cite[Appendix to V.3]{ReedSimon})  
the orthogonal expansion in $ L^2 (\mathbb{R}^{N}, \mathrm{Leb}) $
with respect to the Hermite functions:
\begin{equation*}
\phi_{N} (x) := \frac{1}{\sqrt{N!}} H_{N} (x%/\sqrt{\Delta t}
) ( p^N )^{1/2} 
\end{equation*}
gives so-called 
$ \mathcal{N} $-representation of $ \mathcal{S} $ and $ \mathcal{S}'$;
the series for $ f \in \mathcal{S} $  (resp. 
$ \in \mathcal{S}' $)
\begin{equation*}
\sum {}_{\mathcal{S}'} \langle f, \phi_N \rangle_{\mathcal{S}} \phi_N
\end{equation*}
converges to $ f $ in $ \mathcal{S} $ (resp. 
in $ \mathcal{S}' $). 
In our context, it then follows that 
if $ X ( p^N )^{1/2} \in \mathcal{S} $ (resp. 
$ \in \mathcal{S}' $), then the convergence of the expansion (\ref{AE1}) 
is in $ \mathcal{S} $ (resp. in $ \mathcal{S}' $) as well. 
It should be further noted that we have the following equivalences:
\begin{prop}
It holds that 
\begin{equation}\label{DDinfty}
X ( p^N )^{1/2} \in \mathcal{S}
\iff
X \in \mathbb{D}^{(N)}_{2,\infty} 
= \cap_{s>0} \mathbb{D}^{(N)}_{2,s} 
\end{equation}
and 
\begin{equation}\label{DGWF}
X ( p^N )^{1/2} \in \mathcal{S}'
\iff
X \in \mathbb{D}^{(N)}_{2,-\infty} 
= \cup_{s<0} \mathbb{D}^{(N)}_{2,s},
\end{equation}
where $ \mathbb{D}^{(N)}_{2,s} $ is the completion of $ L^2 ( \mu^N ) $
by the norm 
$ \Vert f \Vert_{2,s} = \Vert (1+L)^{s/2} f \Vert_{ L^2 ( \mu^N ) } $. Here
$ L $ is the Ornstein-Uhlenbeck operator on $ \mathbb{R}^N $;
\begin{equation*}
L =- \sum_{i=1}^N \frac{\partial^2}{\partial x_i^2 } 
+ \sum_{i=1}^N x_i \frac{\partial}{\partial x_i}. 
\end{equation*}
\end{prop}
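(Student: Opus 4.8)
The plan is to prove the two equivalences \eqref{DDinfty} and \eqref{DGWF} by exploiting the unitary correspondence $X \mapsto X(p^N)^{1/2}$ between $L^2(\mu^N)$ and $L^2(\mathbb{R}^N,\mathrm{Leb})$, under which the Hermite--function basis $\{\prod_i \phi_{k_i}\}$ of $L^2(\mathbb{R}^N,\mathrm{Leb})$ corresponds to the Hermite--polynomial basis $\{\prod_i H_{k_i}\}$ of $L^2(\mu^N)$. The key structural fact is that this unitary map intertwines the Ornstein--Uhlenbeck operator $L$ on $L^2(\mu^N)$ with the (shifted) Hermite operator $\mathcal{H} := -\sum_i \partial_i^2 + \sum_i x_i^2$ on $L^2(\mathbb{R}^N,\mathrm{Leb})$ --- more precisely, one checks by a direct conjugation computation that $(p^N)^{1/2}\,(L f)\,$ and $\,(\mathcal{H} - N)\big((p^N)^{1/2} f\big)$ coincide, since the Hermite polynomial $H_k(x)$ is an eigenfunction of $L$ (one dimension) with eigenvalue $k$ while $\phi_k$ is an eigenfunction of $\mathcal{H}$ with eigenvalue $2k+1$. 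Hence on the joint eigenspace indexed by a multi-index $\alpha=(k_1,\dots,k_N)$ the operator $L$ acts as $|\alpha|:=\sum k_i$ and $\mathcal{H}$ acts as $2|\alpha|+N$; in particular $(1+L)$ acts as $1+|\alpha|$ and is comparable, uniformly in $\alpha$, to $(1+\mathcal{H})$ which acts as $1+N+2|\alpha|$.

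First I would record the spectral description of the norms: if $X = \sum_\alpha c_\alpha \prod_i H_{k_i}(x_i)$ in $L^2(\mu^N)$, then $\|X\|_{2,s}^2 = \sum_\alpha (1+|\alpha|)^s |c_\alpha|^2$, while $X(p^N)^{1/2} = \sum_\alpha c_\alpha \sqrt{\alpha!}\,\prod_i \phi_{k_i}(x_i)$ has, in the $\mathcal{N}$-representation of Reed--Simon, Schwartz-space membership characterized by the rapid decay of its Hermite coefficients: $X(p^N)^{1/2}\in\mathcal{S}$ iff $\sum_\alpha (1+|\alpha|)^{r}\,\alpha!\,|c_\alpha|^2 < \infty$ for every $r>0$, and $X(p^N)^{1/2}\in\mathcal{S}'$ iff this holds for some $r<0$ (equivalently some negative power). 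Here one must be a little careful about the combinatorial factor $\alpha! = \prod_i k_i!$ coming from the normalization difference between $H_k$ as an $L^2(\mu)$-orthonormal polynomial and $\phi_k$ as an $L^2(\mathrm{Leb})$-orthonormal function; in the paper's normalization \eqref{Her} the Hermite polynomials already carry the $1/\sqrt{m!}$, so in fact $\phi_k = H_k \cdot (p^N)^{1/2}$ with no extra factor and the coefficient identity is simply $c_\alpha \leftrightarrow c_\alpha$. I would verify this normalization bookkeeping explicitly, since getting it wrong is the one place the argument could silently break.

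Granting the coefficient dictionary, the two equivalences reduce to an elementary statement about sequences: the conditions ``$\sum_\alpha (1+|\alpha|)^s|c_\alpha|^2<\infty$ for all $s>0$'' and ``the $\mathcal{N}$-representation sum converges in $\mathcal{S}$'' are literally the same by the Reed--Simon characterization, because $(1+L)^{s/2}$ and the Hermite-operator weight $(1+\mathcal{H})^{s/2}$ differ only by a factor uniformly bounded above and below by a constant times $(1+|\alpha|)^{s/2}$ on each eigenspace (for each fixed $s$, with the constant depending on $s$ and $N$). Thus $X\in\cap_{s>0}\mathbb{D}^{(N)}_{2,s}$ exactly when $X(p^N)^{1/2}$ lies in every Hermite--Sobolev space, which is exactly $\mathcal{S}$; and dually $X\in\cup_{s<0}\mathbb{D}^{(N)}_{2,s}$ exactly when $X(p^N)^{1/2}$ lies in some negatively-indexed Hermite--Sobolev space, which is exactly $\mathcal{S}'$. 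The duality pairing consistency (that $\mathbb{D}^{(N)}_{2,-s}$ is the dual of $\mathbb{D}^{(N)}_{2,s}$ via the $L^2(\mu^N)$ pairing, matching the $\mathcal{S}'$--$\mathcal{S}$ pairing after multiplying by $(p^N)^{1/2}$) should be noted but is routine once the eigenspace picture is in place.

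The main obstacle is the operator-intertwining step: one must confirm that conjugating $L$ by multiplication by $(p^N)^{1/2}$ really produces (a shift of) the Hermite oscillator $\mathcal{H}$, rather than something with first-order terms left over. The clean way is not to manipulate the differential operators directly but to argue spectrally --- both $L$ on $L^2(\mu^N)$ and $\mathcal{H}$ on $L^2(\mathrm{Leb})$ are diagonalized by the respective product-Hermite bases, which correspond under the unitary map, so the intertwining is forced by comparing eigenvalues $|\alpha|$ and $2|\alpha|+N$. Once that is accepted, everything else is the sequence-space bookkeeping above; the only other place for care is tracking the $N$-dependent constants, which is harmless since $N$ is fixed throughout the section.
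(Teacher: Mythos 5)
Your overall architecture is the same as the paper's: diagonalize everything on the product--Hermite basis, identify $\Vert\cdot\Vert_{2,s}$ with an $(1+|\alpha|)^{s}$-weighted coefficient norm, invoke the $\mathcal{N}$-representation of $\mathcal{S}$ and $\mathcal{S}'$ for the coefficient-decay characterization, and handle \eqref{DGWF} by duality. The paper does exactly this, except that it works with the Schr\"odinger operator $S=-\sum_i\partial_i^2+\tfrac14|x|^2-\tfrac12$ and the exact intertwining $L(f)(p^N)^{1/2}=S\bigl(f(p^N)^{1/2}\bigr)$, giving the identity $\Vert f\Vert_{2,n}=\Vert(1+S)^n f(p^N)^{1/2}\Vert_{L^2(\mathrm{Leb})}$.

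The one step of yours that fails as stated is the intertwining/eigenbasis identification. Conjugating $L$ by multiplication with $(p^N)^{1/2}$ does \emph{not} produce a shift of $\mathcal{H}=-\sum_i\partial_i^2+\sum_i x_i^2$; it produces $-\sum_i\partial_i^2+\tfrac14|x|^2-\tfrac{N}{2}$ (note the quarter, coming from $(p^N)^{1/2}\propto e^{-|x|^2/4}$). Consequently the functions $\prod_i H_{k_i}(x_i)\,(p^N)^{1/2}$, which are the images of your $L$-eigenbasis, are eigenfunctions of this quarter-scaled oscillator with eigenvalue $|\alpha|$, and they are \emph{not} eigenfunctions of $\mathcal{H}$ at all --- the $\mathcal{H}$-eigenbasis is the dilated family obtained by $x\mapsto\sqrt{2}\,x$. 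So your ``clean spectral way'' rests on a false premise (``the respective product-Hermite bases correspond under the unitary map''), and the claim that $(1+\mathcal{H})$ acts as $1+N+2|\alpha|$ on $X(p^N)^{1/2}$ is not meaningful. The damage is repairable: either replace $\mathcal{H}$ by the correctly scaled operator $S$ (as the paper does, which even upgrades your ``comparable'' to an exact norm identity), or keep the standard Hermite-function version of the Reed--Simon $\mathcal{N}$-representation and add the remark that the dilation $x\mapsto\sqrt{2}\,x$ is an automorphism of $\mathcal{S}$ and $\mathcal{S}'$, so coefficient decay with respect to the scaled basis still characterizes membership. Without one of these fixes, the passage from your coefficient dictionary to $\mathcal{S}$/$\mathcal{S}'$ membership is not justified. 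The duality step for \eqref{DGWF} and the comparability-of-weights bookkeeping are fine once this is patched.
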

\begin{proof}
Let $ \{\phi_n : n \in \mathbb{Z} \} $ be norms defined by
\begin{equation*}
\phi_n (f) = \Vert (1+S)^n f \Vert_{L^2 (\mathrm{Leb})},
\end{equation*}
where $ S $ is the following 
Schr\"odinger operator of the harmonic oscillator:
\begin{equation*}
S := - \sum_{i=1}^N \frac{\partial^2}{\partial x_i^2 } + \frac{1}{4}
%\frac{1}{2} 
|x|^2 - \frac{1}{2}. 
\end{equation*}
We know that $ \mathcal{S} $ is a Fr\'echet space 
by the seminorms $ \{ \phi_n \} $. 
In fact, both $ L $ and $ S $ are the number operators respectively
in that;
\begin{equation*}
L \prod_{i=1}^N H_{k_i} (x_i) 
= (\sum_{i=1}^N k_i ) \prod_{i=1}^N H_{k_i} (x_i) 
\end{equation*}
and 
\begin{equation*}
S \prod_{i=1}^N \phi_{k_i} (x_i) 
= (\sum_{i=1}^N k_i ) \prod_{i=1}^N \phi_{k_i} (x_i). 
\end{equation*}
We also have 
\begin{equation*}
L(f) ( p^N )^{1/2} = S (f ( p^N )^{1/2}), 
\end{equation*}
which implies 
\begin{equation*}
\Vert f \Vert_{2,n} = \phi_n (f). 
\end{equation*}
This proves (\ref{DDinfty}). 

The equivalence (\ref{DGWF}) follows from the 
following equivalence of the duality:
\begin{equation*}
{}_{\mathbb{D}^{(N)}_{2, -\infty}}\langle X, 
Y \rangle_{\mathbb{D}^{(N)}_{2, \infty}}
= {}_{\mathcal{S}'} 
\langle X ( p^N )^{1/2}, Y ( p^N )^{1/2} \rangle_{\mathcal{S}}. 
\end{equation*}
\end{proof}

\begin{coro}
For $ X \in \mathbb{D}^{(N)}_{2,s} $, $ s \in \mathbb{R} $, 
the convergence of (\ref{AE1}) 
is also attained in $ \mathbb{D}^{(N)}_{2,s} $.
\end{coro}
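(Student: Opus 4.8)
The plan is to leverage the structural identity established in the proof of the proposition, namely that the map $f \mapsto f(p^N)^{1/2}$ carries the Ornstein--Uhlenbeck number operator $L$ on $L^2(\mu^N)$ to the harmonic oscillator $S$ on $L^2(\mathrm{Leb})$, and hence carries the Sobolev-type norm $\Vert\cdot\Vert_{2,s}$ isometrically onto the norm $\phi_s(\cdot) = \Vert(1+S)^{s/2}\cdot\Vert_{L^2(\mathrm{Leb})}$, for \emph{every} real $s$, not merely integer $s$. Thus $\mathbb{D}^{(N)}_{2,s}$ is, via this unitary-type correspondence, identified with the completion of the Schwartz space (or rather of $L^2(\mathrm{Leb})$) under $\phi_s$. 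Since the Hermite products $\prod_i H_{k_i}(\cdot/\sqrt{\Delta t})$ are simultaneous eigenfunctions of $L$ with eigenvalue $\sum_i k_i$, the norm $\Vert\cdot\Vert_{2,s}$ acts diagonally on the chaos expansion \eqref{exp1}: squaring, $\Vert X\Vert_{2,s}^2 = \sum_{k_1,\dots,k_N} (1+\sum_i k_i)^s\, c_{(k_1,\dots,k_N)}^2$.

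First I would record that the right-hand side of \eqref{AE1} is precisely a regrouping of the orthogonal expansion \eqref{exp1}: by \eqref{PCO2}, \eqref{cond1} and \eqref{DIBP}, the term indexed by $(m,l)$ collects exactly those basis functions $\prod_{i=1}^l H_{k_i}(\Delta W_i/\sqrt{\Delta t})$ whose highest nonzero index is at position $l$ with $k_l = m$. Consequently the partial sums of \eqref{AE1} (truncated at $m \le M$, say, or at any cofinal family of finite index sets) are partial sums of \eqref{exp1} over finite subsets of the index set $\mathbb{Z}_+^N \setminus\{0\}$. Then I would invoke the diagonal formula above: for any finite set $A$ of multi-indices, $\big\Vert \sum_{k\in A} c_k \prod_i H_{k_i} \big\Vert_{2,s}^2 = \sum_{k\in A} (1+|k|)^s c_k^2$ where $|k| := \sum_i k_i$. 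Since $X \in \mathbb{D}^{(N)}_{2,s}$ means exactly that $\sum_{k} (1+|k|)^s c_k^2 < \infty$, the tail sums over the complement of $A$ tend to $0$ as $A$ exhausts the index set, which is the required $\mathbb{D}^{(N)}_{2,s}$-convergence of \eqref{AE1} to $X - E[X]$.

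The one genuine point requiring care — and what I expect to be the main obstacle — is justifying the fractional-order version of the identification $\Vert f\Vert_{2,s} = \phi_s(f)$ for non-integer $s$, and more importantly making sense of $\mathbb{D}^{(N)}_{2,s}$ for negative or fractional $s$ as a space in which \eqref{AE1} can be said to converge. For $s \ge 0$ this is unproblematic: $(1+L)^{s/2}$ is defined by the spectral/functional calculus on the self-adjoint positive operator $L$, acts diagonally on Hermite products, and the completion is the weighted $\ell^2$-space of coefficient sequences. For $s < 0$ one takes $\mathbb{D}^{(N)}_{2,s}$ to be the dual of $\mathbb{D}^{(N)}_{2,-s}$, equivalently again the weighted $\ell^2$-space with weight $(1+|k|)^s$; the pairing of a truncated sum of \eqref{AE1} against a test functional $Y \in \mathbb{D}^{(N)}_{2,-s}$ is still a finite sum of the products $(1+|k|)^s\langle X,\cdot\rangle$-type terms, and Cauchy--Schwarz on the weighted sequence spaces gives the convergence. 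So the proof reduces to: (i) observe $L$ and $S$ intertwine as in the proposition, hence the eigenvalue/diagonal structure persists under $(1+\cdot)^{s/2}$ for all real $s$ by functional calculus; (ii) observe that the partial sums of \eqref{AE1} are partial sums of the chaos expansion \eqref{exp1}; (iii) conclude by the weighted-$\ell^2$ description of $\mathbb{D}^{(N)}_{2,s}$ that these partial sums form a Cauchy net converging to $X-E[X]$.

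I would write this up in three short steps — first the diagonalisation lemma $\Vert\sum_{k\in A}c_k \prod_i H_{k_i}\Vert_{2,s}^2 = \sum_{k\in A}(1+|k|)^s c_k^2$, valid for all $s\in\mathbb{R}$ and extended to negative $s$ by duality as in the proposition's proof; then the remark that \eqref{AE1} is a reordering of \eqref{exp1}; then the tail estimate. Since everything is diagonal in the Hermite basis the estimates are one-line Cauchy criteria on weighted sequences, so no lengthy computation is needed; the only subtlety worth a sentence in the final text is that reordering a sum is harmless here because convergence in $\mathbb{D}^{(N)}_{2,s}$ is convergence of the net of finite partial sums in the weighted $\ell^2$-norm, which is unconditional.
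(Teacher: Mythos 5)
Your argument is correct and is essentially the paper's own proof: the paper likewise notes that the $\mathbb{D}^{(N)}_{2,s}$-norm acts diagonally on the Hermite chaos coefficients, so that the partial sums $X_n=\sum_{k_1+\cdots+k_N\le n}c_{(k_1,\dots,k_N)}\prod_i H_{k_i}$ form a Cauchy sequence in $\mathbb{D}^{(N)}_{2,s}$, which is exactly your weighted-$\ell^2$ tail estimate. Your additional remarks on the fractional/negative $s$ case and on the unconditionality of the reordering of \eqref{exp1} into \eqref{AE1} only make explicit what the paper leaves implicit.
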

\begin{proof}
It follows from the fact that, by the assumption, the partial sums 
\begin{equation*}
X_n := \sum_{ k_1 + \cdots +k_N \leq n } c_{ ( k_1, \cdots, k_N ) }
\prod_{i=1}^N H_{k_i} (x_i), \quad n \in \mathbb{N} 
\end{equation*}
form a Cauchy sequence in $ \mathbb{D}^{(N)}_{2,s} $. 
\end{proof}

%%%%%%%%%%%%%%%%%%%%%%%%%%%%%%%%%%%%%%%%%%%%%%%%%%%%%%%%%%%%%%%%%%%%%%%%%%%%%%%%
\section{Asymptotic Analysis of Martingale Representation Errors}%%%%%%%%%%%%%%%%%%%%%%%%%%%%%%%%%%%%%%%%%%%%%%%%%%%
%%%%%%%%%%%%%%%%%%%%%%%%%%%%%%%%%%%%%%%%%%%%%%%%%%%%%%%%%%%%%%%%%%%%%%%%%%%%%%%%
%Let $W=(W_{t})_{0\leq t\leq T}$ be a one-dimensional Brownian motion.
%Let $(\mathcal{G}_{t})_{0\leq t \leq T}$ be the filtration generated by $W$.
In this section, we will consider the asymptotic behavior
of the error term when $ N \to \infty $ with $ N \Delta t = T $.
For this purpose, 
to make explicit 
the dependence on $ N $ 
we redefine some of the notations. 
$t_{k}:=:t_{k}^{(N)} := \frac{kT}{N}$ for each $k=0,1,\cdots ,N$.
We also write $ \Delta W^N_k = W_{t^{(N)}_k} - W_{t^{(N)}_{k-1}} $ for 
each $ k $ and $ N $, and $ \mathcal{G}^N_k 
:= \sigma (\Delta W^N_l; l=1, \cdots,k) $. 
Further, to facilitate the discussion in the limit, 
we implement our discrete Malliavin-Watanabe calculus into 
the classical one in the first subsection.

\if0%@@@@@@@@@@@@@@@@@@@@@@@@@ COMMENTED OUT BEGIN @@@@@@@@@
We further assume that $ n = 2^N $ for some $ N $. Then
$ \{ \mathcal{G}^n:= \sigma ( W_{k/n}; k=1, \cdots, n ) ; n \in \mathbb{N} \} $
forms a filtration.
\fi%@@@@@@@@@@@@@@@@@@@@@@@@@@ COMMENTED OUT END @@@@@@@@@@@

%%%%%%%%%%%%%%%%%%%%%%%%%%%%%%%%%%%%%%%%%%%%%%%%%%%%%%%%%%%%%%%%%%%%%%%%%%%%%%%%
\subsection{Consistency with the Classical Malliavin Calculus}\label{Consis}%%
%%%%%%%%%%%%%%%%%%%%%%%%%%%%%%%%%%%%%%%%%%%%%%%%%%%%%%%%%%%%%%%%%%%%%%%%%%%%%%%%
First, we review briefly the Malliavin calculus
over the one-dimensional classical Wiener space to introduce notations
which we will use in the following sections devoted to asymptotic analyses,
and then will show how our framework, 
established in the previous sections,
is ``embedded" to the classical Malliavin calculus (Proposition \ref{COM}).

Let $(\mathscr{W}, \mathbf{P})$ be the one-dimensional Wiener space on $[0,T]$.
We consider the canonical process $w=(w(t))_{0\leq t\leq T}$ starting from zero a.s.
In this context, the %Sobolev 
Hilbert
space
$$
H = \Big\{
h \in \mathscr{W}:
\begin{array}{l}
\text{$h(0)=0$ and $h$ is absolutely continuous} \\
\text{with square-integrable derivative}
\end{array}
\Big\}
$$
equipped with the inner product defined by
$$
\langle h_{1}, h_{2} \rangle_{H}
=
\int_{0}^{T} \dot{h}_{1}(t) \dot{h}_{2}(t) dt,
\quad h_{1}, h_{2} \in H
$$
is called the {\it Cameron-Martin subspace} of $\mathscr{W}$.
%The space $H$ becomes a separable Hilbert space under this inner product.
For each complete orthonormal system (CONS, in short) $\{ h_{i} \}_{i=1}^{\infty}$ of $H$,
it is known that
$\displaystyle
\big\{
\prod_{i=1}^{\infty}
H_{a_{i}} \big( \int_{0}^{T} \dot{h}_{i}(t) dw(t) \big)
: \bi{a} \in \bi{\Lambda}
\big\}
$
forms a CONS in $L^{2}(\mathscr{W})$ (see e.g., \cite{IW} Proposition 8.1), where
$\bi{\Lambda}$ is the set of all sequence $\bi{a}=(a_{i})_{i=1}^{\infty}$
of nonnegative integers except for a finite number of $i$'s and $H_{n}$ is the
$n$-th Hermite polynomial defined in (\ref{Her}).
We also denote by
$J_{n}:L^{2}(\mathscr{W}) \to \mathcal{C}_{n}$
the orthogonal projection, where $\mathcal{C}_{n}$ is the $L^{2}(\mathscr{W})$-closure of the subspace
spanned by
$\displaystyle
\big\{
\prod_{i=1}^{\infty}
H_{a_{i}} \big( \int_{0}^{T} \dot{h}_{i}(t) dw(t) \big)
: \sum_{i=1}^{\infty} a_{i} = n
\big\}
$
over $\mathbb{R}$.
Each $\mathcal{C}_{n}$ is called the subspace of 
$ n $-th {\it Wiener's homogeneous chaos}.

For each $s \in \mathbb{R}$, a Sobolev-type Hilbert space $\mathbb{D}_{2,s}=\mathbb{D}_{2,s}(\mathbb{R})$
is defined as the completion of
$
\{ F \in L^{2}(\mathscr{W}): \Vert F \Vert_{\mathbb{D}_{2,s}} < \infty \}
$
under the seminorm $\Vert \cdot  \Vert_{\mathbb{D}_{2,s}}$
on $L^{2}(\mathscr{W})$ defined by
$$
\Vert F \Vert_{\mathbb{D}_{2,s}}^{2}
=
\sum_{n=0}^{\infty} (1+n)^{s} \Vert J_{n}F \Vert_{L^{2}}^{2},
\quad F \in L^{2}(\mathscr{W})
$$
which may be infinite in general.

In the following, for any two separable Hilbert space $H_{1}$ and $H_{2}$,
we denote by $H_{1} \otimes H_{2}$ the completion of the algebraic tensor
product of $H_{1}$ and $H_{2}$ under the Hilbert-Schmidt norm.

It is known that one can define a (continuous) linear operator $D:\mathbb{D}_{2,1} \to L^{2}(\mathscr{W}) \otimes H$
such that
$$
\langle DF ,h \rangle_{H} = D_{h}F \in L^{2}(\mathscr{W})
$$
for every $h\in H$ and $F \in \mathbb{D}_{2,1}$, where $D_{h}F$ is defined by
$$
(D_{h}F)(w)
=
\lim_{\varepsilon \to 0}
\frac{1}{\varepsilon}
\Big\{
F( w + \varepsilon h ) - F(w)
\Big\}
\quad\text{for a.e. $w\in \mathscr{W}$,}
$$
which is well-defined due to the Cameron-Martin theorem (see e.g., \cite{IW} Theorem 8.5).
For each $t \in [0,T]$, let $e_{t}:\mathscr{W} \to \mathbb{R}$ denote the evaluation map
defined by $e_{t}(w) = w(t)$. Then a linear operator
$D_{t}:\mathbb{D}_{2,1} \to L^{2}(\mathscr{W})$ is defined by
\begin{equation}
D_{t}F
=
\frac{d}{dt} \big( \mathrm{id}_{L^{2}(\mathscr{W})} \otimes e_{t} \big)
( DF ),
\quad F \in \mathbb{D}_{2,1}
\label{Mal-Der1}
\end{equation}
for a.a. $t \in [0,T]$.

Under these notations, we can state the relationship between our framework
established in section 2 and that of Malliavin calculus.
We omit the proof because it is immediate from the definition.
\begin{prop}\label{COM}%%%%%%%%%%%%%%%%%%%%%%%%%%%%%%%%%%%%%
For each $X \in \mathbb{D}_{2,1}^{(N)}$, we have
$$
(D_{t} X)(w) = \sum_{l=1}^{N} 1_{ \{ t_{l-1} \leq t < t_{l} \}} ( \partial_{l} X  )(w)
$$
for a.a. $(t,w) \in [0,T] \times \mathscr{W}$.
\end{prop}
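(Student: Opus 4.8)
The plan is to work directly from the definitions of the discrete derivative $\partial_l$ and the classical directional derivative $D_h$, and to exploit the freedom to choose a convenient CONS of the Cameron-Martin space $H$ adapted to the partition $0 = t_0 < t_1 < \cdots < t_N = T$. First I would fix the CONS $\{h_i\}_{i=1}^\infty$ of $H$ whose first $N$ elements are the normalized indicator-integral functions supported on the successive intervals, i.e. $\dot h_l(t) = (T/N)^{-1/2} \mathbf{1}_{\{t_{l-1} \le t < t_l\}}$ for $l = 1, \dots, N$, completed arbitrarily. With this choice $\int_0^T \dot h_l(t)\,dw(t) = (T/N)^{-1/2}\,\Delta W^N_l$, so that a functional $X \in \mathbb{D}^{(N)}_{2,1}$ — which by definition depends only on $(\Delta W^N_1, \dots, \Delta W^N_N)$ — is expressed through the first $N$ chaos variables only, and the finite-dimensional objects $\partial_l X$ of Section 2 are literally partial derivatives in these variables.

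Next I would compute $D_{h_l} X$ for $l = 1, \dots, N$ using the Cameron-Martin difference quotient. Since $X(w + \varepsilon h_l)$ replaces $\Delta W^N_l$ by $\Delta W^N_l + \varepsilon (T/N)^{1/2}$ (the shift $h_l$ increments only the $l$-th block and leaves the other increments unchanged), the chain rule gives $D_{h_l} X = (T/N)^{1/2}\,\partial_l X$, while $D_{h_i} X = 0$ for $i > N$ because $X$ does not involve those chaos variables. Unwinding the identification $\langle DF, h\rangle_H = D_h F$ together with the expansion $DF = \sum_i (D_{h_i}F)\, h_i$ valid in $L^2(\mathscr{W}) \otimes H$, I get $DX = \sum_{l=1}^N (T/N)^{1/2}(\partial_l X)\, h_l$. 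Finally, applying the definition (\ref{Mal-Der1}) of $D_t$, namely differentiating $\big(\mathrm{id}\otimes e_t\big)(DX) = \sum_{l=1}^N (T/N)^{1/2}(\partial_l X)\, h_l(t)$ in $t$, and using $\frac{d}{dt} h_l(t) = \dot h_l(t) = (T/N)^{-1/2}\mathbf{1}_{\{t_{l-1}\le t<t_l\}}$, the factors $(T/N)^{\pm 1/2}$ cancel and one obtains $D_t X = \sum_{l=1}^N \mathbf{1}_{\{t_{l-1}\le t<t_l\}}(\partial_l X)$ for a.a. $(t,w)$, which is the claim.

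The one point that needs a little care rather than being purely formal is the justification that $\partial_l X$ in the classical (a.s.\ defined, $L^2$) sense agrees with $\partial_l X$ in the distributional sense of Section 2.1, and that the difference quotient defining $D_{h_l} X$ actually converges in $L^2(\mathscr{W})$ to this object for $X \in \mathbb{D}^{(N)}_{2,1}$. This is handled by the density of smooth cylindrical functionals: for $X$ a smooth function of $(\Delta W^N_1, \dots, \Delta W^N_N)$ with derivatives of polynomial growth the identity is the elementary chain rule, and both sides are continuous in the $\mathbb{D}^{(N)}_{2,1}$-norm (the right-hand side by Proposition \ref{COM}'s own assertion about $D$, the left-hand side because $\Vert \partial_l X \Vert_{L^2(\mu^N)}$ is controlled by $\Vert X\Vert_{2,1}$ via the number-operator characterization from the Proposition preceding the Corollary in Section 2.3), so the identity extends by approximation. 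As the authors note, once the CONS is chosen adapted to the partition this is essentially immediate, so I expect no real obstacle — the main thing is simply to make the adapted choice of $\{h_i\}$ explicit so that the two notions of "$l$-th variable" coincide on the nose.
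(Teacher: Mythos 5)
Your proof is correct, and it coincides with what the authors intend: the paper omits the proof of Proposition \ref{COM} as ``immediate from the definition,'' and your argument --- choosing the CONS of $H$ adapted to the partition so that $\int_0^T \dot h_l\,dw=(T/N)^{-1/2}\Delta W^N_l$, computing $D_{h_l}X=(T/N)^{1/2}\partial_l X$ from the Cameron--Martin shift, reading off $D_tX$ from the definition (\ref{Mal-Der1}), and extending from smooth cylindrical functionals by continuity in the $\mathbb{D}^{(N)}_{2,1}$-norm --- is exactly that direct verification spelled out. No gap to report.
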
%%%%%%%%%%%%%%%%%%%%%%%%%%%%%%%%%%%%%%%%%%%%%%%%%%

For each $F \in \mathbb{D}_{2,1}$, one can prove that
$\mathbf{E} [ F\vert \mathcal{G}_{N}^{N} ] \in \mathbb{D}_{2,1}^{(N)}$
and $\lim_{N\to\infty}\mathbf{E} [ F\vert \mathcal{G}_{N}^{N} ] = F$ in $\mathbb{D}_{2,1}$
(consult e.g., \cite{MaTh} Theorem 1.10).
By using also the fact that $e_{t}(h) = \langle 1_{[0,t)}, h\rangle_{H}$
for each $h \in H$, one can obtain
\begin{equation}
( D_{t}F )(w)
=
\lim_{N \to \infty}
\sum_{l=1}^{N}
1_{ \{ t_{l-1} \leq t < t_{l} \} }
\partial_{l} \mathbf{E} [ F \vert \mathcal{G}_{N}^{N} ] (w)
\label{Mal-D}
\end{equation}
for a.a. $(t,w) \in [0,T] \times \mathscr{W}$.
Note that in \cite{MaTh}, the derivative $ D $ 
on the path space $ \mathscr{W} $ 
is defined directly by (\ref{Mal-D}) with $N=2^{n}$. 
%We would like to point out that 
%Although we have employed the standard approach
%(in the current literature) in introducing the derivative in (\ref{Mal-Der1}),
%the formulation given in (\ref{Mal-D}) with $N=2^{n}$ was in fact an idea
%by Malliavin to define a derivative on the path space 
%(see \cite{MaTh}, subsection 1.3). 
Following this approach in \cite{MaTh}, 
we define $D_{\cdot}^{k} X \in L^{2}[0,T] \otimes L^{2}(\mathbf{P})$
as the $L^{2}$-limit of the sequence
$(D_{\cdot}^{k} \mathbf{E}[ X \vert \mathcal{G}_{N}^{N} ])_{N=1}^{\infty}$
if it exists (see \cite{MaTh}, Theorem 1.10 to consult what condition is enough to get this limit).

\if
we define for each $F\in\mathbb{D}_{2,n}$,
$$
( D_{t}^{n}F )(w)
=
\limsup_{N \to \infty}
\sum_{l=1}^{N}
1_{ \{ t_{l-1} \leq t < t_{l} \} }
\partial_{l}^{n} \mathbf{E} [ F \vert \mathcal{G}_{N}^{N} ] (w)
$$
for a.a. $(t,w) \in [0,T] \times \mathscr{W}$.
In the following, we will only consider 
such $F \in \mathbb{D}_{2,n}$ for which
$D_{\cdot}^{n}F$ is an element in $L^{2}[0,T] \otimes L^{2} (\mathscr{W})$.
One may consult Theorem 1.10 in \cite{MaTh} for sufficient condition for which
$D_{\cdot}^{n}F$ is in fact the $L^{2}[0,T] \otimes L^{2}(\mathscr{W})$-limit
of
$
\sum_{l=1}^{N}
1_{ [ t^{(N)}_{l-1} , t^{(N)}_{l} ) }
\partial_{l}^{n} \mathbf{E} [ F \vert \mathcal{G}_{N}^{N} ]
$.
\fi
%Finally we would like to remark that Malliavin calculus can be carried on
%general Gaussian probability spaces rather than the classical Wiener space,
%via a representation of the Heisenberg algebra (see e.g., \cite{Nu}, \cite{Privault1} etc).

By the above discussions, we may write 
$$
D_{t}^{k}X := \partial_{l}^{k} X
\quad\text{if $t_{l-1} \leq t < t_{l}$}
$$
for $X \in \mathbb{D}_{2,n}^{(N)}$, $t\in [0,T]$, and
$k=1,2,\cdots ,n $.  

\if
$ X \in
\{
	Y \in L^{2}(\mathcal{G}_{T}) :
	\mathbf{E}[ Y \vert \mathcal{G}_{N}^{N} ] \in \mathbb{D}_{2,n}^{(N)}
	\quad\text{for each $N$}
\}
$
and $k=1,2,\cdots ,n$, we define $D_{\cdot}^{k} X \in L^{2}[0,T] \otimes L^{2}(\mathbf{P})$
as the $L^{2}$-limit of the sequence
$(D_{\cdot}^{k} \mathbf{E}[ X \vert \mathcal{G}_{N}^{N} ])_{N=1}^{\infty}$
if it exists (see \cite{MaTh}, Theorem 1.10 to consult what condition is enough to get this limit).
\fi

%%%%%%%%%%%%%%%%%%%%%%%%%%%%%%%%%%%%%%%%%%%%%%%%%%%%%%%%%%%%
\subsection{A Central Limit Theorem for the Errors}%%%%%%%%%%%%%%%%%%%%%%%%%%%%%%%%%%%%%
%%%%%%%%%%%%%%%%%%%%%%%%%%%%%%%%%%%%%%%%%%%%%%%%%%%%%%%%%%%%

\if0%@@@@@@@@@@@@@@@@@@@@@@@@@ COMMENTED OUT BEGIN @@@@@@@@@
In this subsection, we specify our partition to be
$$
\Delta : 0 =t_{0} < t_{1}=\frac{T}{N} < \cdots < t_{n} = \frac{nT}{N} < \cdots t_{N}=T
$$
and write $\Delta t \equiv \Delta t_{n} = t_{n} - t_{n-1}$.
\fi%@@@@@@@@@@@@@@@@@@@@@@@@@@ COMMENTED OUT END @@@@@@@@@@@

Suppose that we are given a sequence $(X^{N})_{N=1}^{\infty}$
of finite dimensional Wiener functionals $X^{N} \in L^{2}( \mathcal{G}^{N}_N )$
for each $ N $.

We put, for $n\geq 0$, 
$$
\mathrm{Err}_{N}(n)
:=
X^{N}
-
\sum_{m=0}^{n} \sum_{l=1}^{N}
\frac{ (\Delta t)^{m/2} }{ \sqrt{m!} }
\mathbf{E} \big[
%\partial_{l}^{m} 
D_{lT/N}^m X^{N} \big\vert \mathcal{G}_{l-1}^{N}
\big]
H_{m} \left( \frac{ \Delta W^N_{l} }{ \sqrt{\Delta t} } \right) .
$$

\begin{thm}\label{WEAK}%%%%%%%%%%%%%%%%%%%%%%%%%%%%%%%%%%%%%
Let $n \in \mathbb{N}$.
Suppose that $X^{N} \in \mathbb{D}_{2,n+2}^{(N)}$ for each $N=1,2,\cdots$
and for some Wiener functional $X \in \mathbb{D}_{2,n+1}( \mathbb{R} )$, we have
\begin{itemize}
\item $X^{N} \to X$ in $L^{2}(\mathbf{P})$,

\item $\displaystyle
\int_{0}^{T} \hspace{-2mm} \Vert D_{t}^{p+1} X^{N} - D_{t}^{p+1} X \Vert_{L^{2}}^{2} dt
\to
0
$
\end{itemize}
as %$| \Delta | \to 0$ 
$ N \to \infty $ for each $p=0,1,\cdots ,n$ and
\begin{itemize}
\item $\displaystyle
\sup_{N} %=1,2,\cdots} 
\int_{0}^{T} \hspace{-2mm} \Vert D_{t}^{n+2} X^{N} \Vert_{L^{2}}^{2} dt < \infty .
$
\end{itemize}
Then we have
\begin{equation*}
\begin{split}
&
\left(\begin{array}{c}
\mathrm{Err}_{N}(0) \\
( \Delta t )^{-1/2} \mathrm{Err}_{N}(1) \\
\vdots \\
( \Delta t )^{-n/2} \mathrm{Err}_{N}(n)
\end{array}\right)
\to
\left(\begin{array}{c}
\displaystyle
\int_{0}^{T} \hspace{-2mm} \mathbf{E} \big[
D_{t} X \big\vert \mathcal{G}_{t}
\big]
dW_{t} \\
\displaystyle
\frac{1}{\sqrt{ 2 }}
\int_{0}^{T} \hspace{-2mm} \mathbf{E} \big[
D_{t}^{2} X \big\vert \mathcal{G}_{t}
\big]
dB_{t}^{1} \\
\vdots \\
\displaystyle
\frac{1}{\sqrt{ (n+1)! }}
\int_{0}^{T} \hspace{-2mm} \mathbf{E} \big[
D_{t}^{n+1} X \big\vert \mathcal{G}_{t}
\big]
dB_{t}^{n}
\end{array}\right)
\end{split}
\end{equation*}
in probability on an extended probability space as $N \to \infty $, where
$(B^{1},\cdots , B^{n})=(B_{t}^{1},\cdots , B_{t}^{n})_{0\leq t\leq T}$
is an $n$-dimensional Brownian motion independent of 
$W=(W_{t})_{0\leq t\leq T}$.
\end{thm}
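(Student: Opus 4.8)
The plan is to decompose each $\mathrm{Err}_N(n)$, using the Discrete Clark--Ocone formula (Theorem~\ref{ACO2}), into a main ``leading chaos'' term of order $(\Delta t)^{n/2}$ plus a remainder of strictly higher order, and then to identify the limit of the normalized leading term as a mixed Gaussian (Brownian) integral via a martingale central limit theorem. Concretely, applying \eqref{AE1} to $X=X^N$ and subtracting the partial sum up to $m=n$, one gets
\begin{equation*}
\mathrm{Err}_N(n) = \sum_{m=n+1}^{\infty} \sum_{l=1}^{N}
\frac{(\Delta t)^{m/2}}{\sqrt{m!}}\,
\mathbf{E}[\partial_l^m X^N \vert \mathcal{G}_{l-1}^N]\,
H_m\!\left(\frac{\Delta W_l^N}{\sqrt{\Delta t}}\right).
\end{equation*}
So $(\Delta t)^{-n/2}\mathrm{Err}_N(n)$ is $(\Delta t)^{1/2}$ times the $m=n+1$ slice plus terms carrying an extra factor $(\Delta t)$ or more. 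First I would show the tail (all $m\geq n+2$ slices, summed and multiplied by $(\Delta t)^{-n/2}$) is negligible in $L^2$: by orthonormality of the Hermite basis its squared $L^2$-norm is $\sum_{m\geq n+2}\frac{(\Delta t)^{m-n}}{m!}\sum_l \Vert \mathbf{E}[\partial_l^m X^N\vert\mathcal{G}_{l-1}^N]\Vert_{L^2}^2$, which using $\Vert \mathbf{E}[\cdot\vert\mathcal{G}]\Vert_{L^2}\leq\Vert\cdot\Vert_{L^2}$, the identity $\sum_l \Vert D^m_{\cdot}X^N\Vert_{L^2}^2 = (\Delta t)^{-1}\int_0^T\Vert D_t^m X^N\Vert_{L^2}^2\,dt$ (from Proposition~\ref{COM}, $D_t^m X^N$ being piecewise constant), and $\Delta t=T/N$, is dominated (after summing the geometric-type series and using the chaos-norm bound $\sum_l\Vert\partial_l^m X^N\Vert_{L^2}^2\lesssim \Vert X^N\Vert_{\mathbb{D}_{2,?}}^2$ together with the uniform $\mathbb{D}_{2,n+2}^{(N)}$ bound) by $O(\Delta t)\cdot\sup_N\int_0^T\Vert D_t^{n+2}X^N\Vert^2\,dt\to 0$.

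It then remains to treat the leading slice
\begin{equation*}
M_N(n) := (\Delta t)^{1/2}\sum_{l=1}^{N}
\frac{(\Delta t)^{(n+1)/2}}{\sqrt{(n+1)!}}\,(\Delta t)^{-n/2}
\,\mathbf{E}[\partial_l^{n+1} X^N\vert\mathcal{G}_{l-1}^N]\,
H_{n+1}\!\left(\frac{\Delta W_l^N}{\sqrt{\Delta t}}\right)
= \frac{1}{\sqrt{(n+1)!}}\sum_{l=1}^N \xi_l^{(N)}\,\eta_l^{(N)},
\end{equation*}
where $\xi_l^{(N)}=\mathbf{E}[\partial_l^{n+1}X^N\vert\mathcal{G}_{l-1}^N]$ is $\mathcal{G}_{l-1}^N$-measurable and $\eta_l^{(N)}=\Delta t\cdot H_{n+1}(\Delta W_l^N/\sqrt{\Delta t})$ has $\mathbf{E}[\eta_l^{(N)}\vert\mathcal{G}_{l-1}^N]=0$, $\mathbf{E}[(\eta_l^{(N)})^2\vert\mathcal{G}_{l-1}^N]=(\Delta t)^2$ (since $H_{n+1}$ is orthonormal of unit $L^2(\mu)$-norm), and is independent of $\mathcal{G}_{l-1}^N$. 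Thus $M_N(n)$ is a sum of martingale differences with predictable quadratic variation $\frac{1}{(n+1)!}\sum_l (\xi_l^{(N)})^2(\Delta t)^2 = \frac{\Delta t}{(n+1)!}\int_0^T\Vert \text{(piecewise constant version of }\mathbf{E}[D^{n+1}X^N\vert\mathcal{G}^N])\Vert^2$\,... — more precisely the bracket process at time $t=t_k$ is $\frac{1}{(n+1)!}\sum_{l\leq k}(\Delta t)\cdot \Delta t\,(\xi_l^{(N)})^2$, and by the $L^2$-convergence hypotheses $D_t^{n+1}X^N\to D_t^{n+1}X$ it converges in probability to $\frac{1}{(n+1)!}\int_0^t\mathbf{E}[D_s^{n+1}X\vert\mathcal{G}_s]^2\,ds$. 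To get the \emph{joint} limit over $n$, I would moreover check the cross-brackets $\langle M_\cdot(p), M_\cdot(q)\rangle$ for $p\neq q$ vanish in the limit — this follows from $\mathbf{E}[H_{p+1}(\Delta W_l/\sqrt{\Delta t})H_{q+1}(\Delta W_l/\sqrt{\Delta t})\vert\mathcal{G}_{l-1}]=0$ for $p\neq q$, so the discrete cross-bracket is identically zero — and the bracket of each $M_\cdot(n)$ with the underlying $W$ vanishes for $n\geq 1$ (again by Hermite orthogonality, $\mathbf{E}[H_{n+1}\cdot H_1]=0$), while for $n=0$, $M_N(0)=\mathrm{Err}_N(0)$ has bracket with $W$ converging to $\int_0^\cdot\mathbf{E}[D_sX\vert\mathcal{G}_s]\,ds$ and is itself the martingale-representation remainder, so it converges to $\int_0^T\mathbf{E}[D_tX\vert\mathcal{G}_t]\,dW_t$. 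Applying the martingale CLT for multidimensional local-martingale triangular arrays (in the stable / extended-space form, e.g. Jacod--Shiryaev), with limiting bracket $\mathrm{diag}\big(\langle\text{$W$-part}\rangle, \frac{1}{2!}\int\mathbf{E}[D^2X\vert\mathcal{G}]^2, \ldots, \frac{1}{(n+1)!}\int\mathbf{E}[D^{n+1}X\vert\mathcal{G}]^2\big)$ and a Lindeberg/negligibility condition on the jumps (each jump is $O(\Delta t\cdot H_{m}(\Delta W_l/\sqrt{\Delta t}))$, whose conditional $(2+\delta)$-moment is $O((\Delta t)^{2+\delta/2})$, summing to $O((\Delta t)^{\delta/2})\to 0$), yields convergence to the stated vector of independent-Brownian integrals $\frac{1}{\sqrt{(n+1)!}}\int_0^T\mathbf{E}[D_t^{n+1}X\vert\mathcal{G}_t]\,dB_t^n$; the diagonal bracket of the $n$-th component is $\frac{1}{(n+1)!}\int_0^T\mathbf{E}[D_t^{n+1}X\vert\mathcal{G}_t]^2\,dt$, which matches $\mathrm{Var}\big(\frac{1}{\sqrt{(n+1)!}}\int\mathbf{E}[D^{n+1}X\vert\mathcal{G}]\,dB^n\big)$, and the independence of $(B^1,\ldots,B^n)$ from $W$ and from each other is exactly the vanishing of the off-diagonal brackets.

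The main obstacle I anticipate is \textbf{not} the martingale CLT itself (which is standard once the bracket convergence is in hand) but rather the \emph{passage from the discrete Malliavin derivatives to the classical ones} in the bracket computation: one needs that $\sum_{l}(\Delta t)\,\mathbf{E}[\partial_l^{n+1}X^N\vert\mathcal{G}_{l-1}^N]^2 \mathbf{1}_{[t_{l-1},t_l)}(t)\to \mathbf{E}[D_t^{n+1}X\vert\mathcal{G}_t]^2$ in an $L^1([0,T]\times\Omega)$ (or in-probability, integrated) sense, which requires combining: (i) the hypothesis $\int_0^T\Vert D_t^{n+1}X^N - D_t^{n+1}X\Vert_{L^2}^2\,dt\to 0$; (ii) the fact that conditioning on $\mathcal{G}_{l-1}^N$ versus on $\mathcal{G}_t$ for $t\in[t_{l-1},t_l)$ agree in the limit because the mesh $\to 0$ and $\mathbf{E}[\cdot\vert\mathcal{G}_k^N]\to\mathbf{E}[\cdot\vert\mathcal{F}_{t}]$ as established in \cite{MaTh}; (iii) that the map $Y\mapsto \mathbf{E}[Y\vert\mathcal{G}]$ is an $L^2$-contraction so convergence of the unconditioned derivatives transfers to the conditioned ones. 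Handling the interplay of these three limits simultaneously — mesh refinement, $X^N\to X$, and the conditioning — is the delicate bookkeeping step, and is exactly where the extra regularity hypothesis $X^N\in\mathbb{D}_{2,n+2}^{(N)}$ with the uniform bound on $\int\Vert D^{n+2}X^N\Vert^2$ is consumed (it both kills the higher tail and provides the uniform integrability needed to upgrade the in-probability bracket convergence and control the one remaining order in $\Delta t$).
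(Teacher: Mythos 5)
Your overall architecture (split $(\Delta t)^{-p/2}\mathrm{Err}_N(p)$ via Theorem \ref{ACO2} into the slice $m=p+1$ plus a tail, kill the tail in $L^2$, prove a joint CLT for the leading slices) matches the paper, but your tail estimate has a genuine gap. You bound $\Vert\mathbf{E}[\partial_l^m X^N\mid\mathcal{G}_{l-1}^N]\Vert_{L^2}$ by $\Vert\partial_l^m X^N\Vert_{L^2}$ for \emph{every} $m\ge p+2$ and then appeal to an unspecified ``chaos-norm bound'' $\sum_l\Vert\partial_l^m X^N\Vert^2\lesssim\Vert X^N\Vert_{\mathbb{D}_{2,?}}^2$; but the hypotheses control only derivatives up to order $n+2$, and membership $X^N\in\mathbb{D}_{2,n+2}^{(N)}$ gives no $L^2$ bound (indeed not even finiteness) for $\partial_l^m X^N$ with $m>n+2$, let alone one summable in $m$ uniformly in $N$. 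The step that makes the tail work with only the $(p+2)$-nd derivative is the integration-by-parts identity (\ref{DIBP}): writing $m=k+p+2$, one has $\frac{(\Delta t)^{m/2}}{\sqrt{m!}}\mathbf{E}[\partial_l^m X^N\mid\mathcal{G}_{l-1}^N]=\frac{(\Delta t)^{(p+2)/2}\sqrt{k!}}{\sqrt{m!}}\,\mathbf{E}\big[(\partial_l^{p+2}X^N)\,H_k(\Delta W_l^N/\sqrt{\Delta t})\mid\mathcal{G}_{l-1}^N\big]$, which by conditional Cauchy--Schwarz is dominated by $\Vert D^{p+2}_{lT/N}X^N\Vert_{L^2}$ times the summable factor $\sqrt{k!/(k+p+2)!}\le k^{-(p+2)/2}$; summing over $k$ and $l$ gives exactly the paper's bound (\ref{INE-COR}), i.e.\ $O(\Delta t)\int_0^T\Vert D_t^{p+2}X^N\Vert_{L^2}^2dt$. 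Without this reduction your asserted domination by $O(\Delta t)\sup_N\int_0^T\Vert D_t^{n+2}X^N\Vert_{L^2}^2dt$ does not follow.

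For the leading slice you take a genuinely different route from the paper: a multidimensional martingale CLT for triangular arrays in stable form (Jacod--Shiryaev), identifying the limit through the brackets $\frac{1}{(p+1)!}\sum_l\Delta t\,\xi_l^2\to\frac{1}{(p+1)!}\int_0^\cdot\mathbf{E}[D_s^{p+1}X\mid\mathcal{G}_s]^2ds$, the vanishing cross-brackets (Hermite orthogonality) and the bracket with $W$ (nonzero only for $p=0$). The paper instead proves a functional CLT for the Hermite random walks $L^{p,N}$, upgrades to a.s.\ convergence via the Skorohod representation theorem on an extension, and then passes to the limit in the discrete integrals; your stable-convergence formulation is an acceptable, arguably cleaner, substitute for the paper's ``in probability on an extended space'', and the delicate bracket-identification step you flag is treated with comparable brevity in the paper itself. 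Two repairs are needed, though: your normalization of $M_N(n)$ carries a spurious factor $(\Delta t)^{1/2}$ (the martingale differences are $(\Delta t)^{1/2}\xi_l H_{p+1}/\sqrt{(p+1)!}$, with bracket increment $\Delta t\,\xi_l^2/(p+1)!$, not $(\Delta t)^2\xi_l^2$, which would vanish in the limit), and your Lindeberg check ignores the random factor $\xi_l$, which is only square-integrable, so the claimed $O((\Delta t)^{2+\delta/2})$ conditional moment is not available; instead one should verify the conditional Lindeberg condition using the uniform integrability of $\{|\mathbf{E}[D^{p+1}_\cdot X^N\mid\mathcal{G}^N_\cdot]|^2\}_N$ on $[0,T]\times\Omega$, which follows from the assumed $L^2(dt\otimes d\mathbf{P})$-convergence of $D^{p+1}X^N$ to $D^{p+1}X$.
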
%%%%%%%%%%%%%%%%%%%%%%%%%%%%%%%%%%%%%%%%%%%%%%%%%%%
\begin{rem}%%%%%%%%%%%%%%%%%%%%%%%%%%%%%%%%%%%%%%%%%%%%%%%%%
Although the Brownian motion $B=(B^{1},\cdots ,B^{n})$ above is not adapted to the filtration
$(\mathcal{F}_{t})_{0 \leq t \leq T}$, the above stochastic integrals make
sense because it is an
$\big( \mathcal{F}_{t} \vee \sigma ( B_{s}: 0 \leq s \leq t ) \big)_{0 \leq t \leq T}$-Brownian motion.
\end{rem}%%%%%%%%%%%%%%%%%%%%%%%%%%%%%%%%%%%%%%%%%%%%%%%%%%%
\begin{proof}%%%%%%%%%%%%%%%%%%%%%%%%%%%%%%%%%%%%%%%%%%%%%%%
By Theorem \ref{ACO2}, we have, 
\begin{equation*}
\begin{split}
&( \Delta t )^{-p/2} \mathrm{Err}_{N}(p) \\
& =
\sum_{m=p+1}^{\infty} \sum_{l=1}^{N}
\frac{ (\Delta t)^{(m-p)/2} }{ \sqrt{m!} }
\mathbf{E} \big[
D_{lT/N}^{m} X^{N} \big\vert \mathcal{G}_{l-1}^{N}
\big]
H_{m} \left( \frac{\Delta W^N_{l}}{ \sqrt{ \Delta t } } \right) .
\end{split}
\end{equation*}
For $m\geq p+2$, by using the integration by parts formula (\ref{DIBP}),
we see that
\begin{equation}
\begin{split}
&
\left\Vert
\sum_{m=p+2}^{\infty} \sum_{l=1}^{N}
\frac{ (\Delta t)^{(m-p)/2} }{ \sqrt{m!} }
\mathbf{E} \big[
D_{lT/N}^{m} X^{N} \big\vert \mathcal{G}_{l-1}^{N}
\big]
H_{m} \left( \frac{\Delta W^N_{l}}{ \sqrt{ \Delta t } } \right)
\right\Vert_{L^{2}}^{2} \\
&=%%%%%%%%
( \Delta t )^{2}
\sum_{k=0}^{\infty} \sum_{l=1}^{N}
\frac{k!}{ (k+p+2)! }
\left\Vert\mathbf{E} \big[
\big( D_{lT/N}^{p+2} X^{N} \big)
H_{k} \left( \frac{\Delta W^N_{l}}{ \sqrt{ \Delta t } } \right)
\big\vert \mathcal{G}_{l-1}^{N}
\big]
%H_{m} \left( \frac{\Delta W^N_{l}}{ \sqrt{ \Delta t } } \right)
\right\Vert_{L^{2}}^{2} \\
&\leq%%%%%
(\Delta t)
\sum_{k=1}^{\infty} \frac{ 1 }{ k^{p+2} }
\times
\sum_{l=1}^{N}
\big\Vert
D_{lT/N}^{p+2} X^{N}
\big\Vert_{L^{2}}^{2} \Delta t \\
& =
(\Delta t)
\sum_{k=1}^{\infty} \frac{ 1 }{ k^{p+2} }
\times
\int_{0}^{T} \hspace{-2mm} \Vert D_{t}^{p+2} X^{N} \Vert_{L^{2}}^{2} dt
\label{INE-COR}%@@@@@@@@@@@@@@@@@@@@@@@@@@@@@@@@@@@@@@@@@@@@ INE-COR
\end{split}
\end{equation}
which goes to zero as $N \to \infty$ for each $p=0,1,\cdots ,n$ by the assumption.

Let us consider the case $m=p+1$. For each $p=0,1,\cdots ,n$, we define a right-continuous
process $L^{p,N}=(L_{t}^{p,N})_{0\leq t\leq T}$
with left-hand side limits by
$$
L_{t}^{p,N} := \sum_{l=1}^{k} H_{p+1} \left( \frac{\Delta W^N_{l}}{ \sqrt{\Delta t} } \right)
\quad \text{if $t_{k-1} \leq t < t_{k}$}
$$
for $k=1,2, \cdots ,N$, and $L_{T}^{p,N}:=L_{t_{N-1}}^{p,N}$.

Since $H_{p+1} \left( \frac{\Delta W^N_{l}}{ \sqrt{\Delta t} } \right)$, $l=1,2,\cdots ,N$
are i.i.d. random variables and
$H_{p+1} \left( \frac{\Delta W^N_{l}}{ \sqrt{\Delta t} } \right)$, $p=0,1, \cdots ,n$
are orthogonal to each other for each $l=1,2,\cdots , N$,
the central limit theorem of finite dimensional distributions of $(\Delta t)^{1/2} L^{p,N}$,
$N=1,2,\cdots$ follows as
for each $0 \leq s < t$, with taking $t_{j-1} \leq s < t_{j}$ and $t_{k-1} \leq t < t_{k}$,
\begin{equation*}
\begin{split}
&
\lim_{N \to \infty} \mathbf{E} \big[
e^{\text{{\scriptsize$\displaystyle
i \sum_{p=0}^{n} \xi_{p} \left\{ (\Delta t)^{1/2} L_{t}^{p,N} - (\Delta t)^{1/2} L_{s}^{p,N} \right\}
$}}
}
\big\vert \mathcal{F}_{s}^{ L^{0,N} } \vee \mathcal{F}_{s}^{ L^{1,N} } \vee \cdots \mathcal{F}_{s}^{ L^{n,N} }
\big] \\
&=%%%%%%%%
\lim_{N \to \infty} \prod_{l=j+1}^{k} \mathbf{E} \big[
e^{
	i
	\text{{\scriptsize$\displaystyle
	\sum_{p=0}^{n} \left( \xi_{p} \sqrt{ t_{k} - t_{j} } \right) \cdot
	(k-j)^{-1/2} H_{p+1} \left( \frac{\Delta W^N_{l}}{\sqrt{\Delta t}} \right)
	$}}
}
\big] \\
&=%%%%%%%%
\lim_{N \to \infty} \prod_{l=j+1}^{k}
\Big\{
1 - \frac{ |\xi |^{2} }{2(k-j)}(t_{k}-t_{j})
+ o \left( \frac{ |\xi |^{2} }{ k-j } \right)
\Big\}
=
e^{ - \frac{ \xi^{2} }{ 2 } (t-s) }.
\end{split}
\end{equation*}
for each $\xi = ( \xi_{0}, \xi_{1}, \cdots , \xi_{n} ) \in \mathbb{R}^{n+1}$,
where $(\mathcal{F}_{t}^{Z})_{0 \leq t \leq T}$ denotes the filtration generated
by a stochastic process $Z=(Z_{t})_{0\leq t \leq T}$ and the little-o-notation 
is with respect to the asymptotics 
when $N \to \infty$(so that $k-j \to \infty$).
This implies that every finite dimensional distribution of $(n+1)$-dimensional process
$( (\Delta t)^{1/2} L^{p,N})_{p=0}^{n}$ converges to that of an $(n+1)$-dimensional
Brownian motion $(B^{0}, B^{1}, \cdots , B^{n})=(B_{t}^{0},B_{t}^{1}, \cdots , B_{t}^{n})_{0\leq t\leq T}$.

Besides, using Kolmogorov's inequality,
we have for each $p=0,1,\cdots ,n$,
\begin{equation*}
\begin{split}
& \lim_{K \to \infty} \limsup_{N \to \infty}
\mathbf{P} \Big(
\sup_{ 0 \leq t \leq T}  \big\vert ( \Delta t )^{1/2} L_{t}^{p,N} \big\vert
\geq K
\Big) \\
& \qquad \leq
\lim_{K\to \infty}
\frac{
	( \Delta t ) \Vert L_{T}^{p,N} \Vert_{L^{2}}^{2}
}{
	K^{2}
}
=0
\end{split}
\end{equation*}
and for each $\varepsilon >0$,
\begin{equation*}
\begin{split}
&
\lim_{\delta \to 0} \limsup_{N \to \infty}
\mathbf{P} \Big(
\inf_{ \substack{ \{ s_{j} \}_{j} \subset [0,T] :\\ | s_{j} -s_{j+1} | > \delta }}
\max_{j} \sup_{t,s \in [s_{j-1},s_{j})} (\Delta t )^{1/2}| L_{t}^{p,N} - L_{s}^{p,N}  |
\geq \varepsilon
\Big) \\
&\leq
\limsup_{N \to \infty}
\mathbf{P} \Big(
\max_{j=1,2,\cdots ,N} \sup_{t,s \in [s_{j-1},s_{j})} (\Delta t )^{1/2}| L_{t}^{p,N} - L_{s}^{p,N}  |
\geq \varepsilon
\Big) \\
&=
\limsup_{N \to \infty}
\mathbf{P} \big( 0 \geq \varepsilon \big) =0.
\end{split}
\end{equation*}
They imply the tightness of $\{ (\Delta t)^{1/2} L^{p,N} \}_{N=1}^{\infty}$(see Billingsley \cite{Bi},
Theorem 13.2). Therefore,
\begin{equation*}
\big\{ \big(
(\Delta t)^{1/2} L^{0,N}, (\Delta t)^{1/2} L^{1,N}, \cdots , (\Delta t)^{1/2} L^{n,N}
\big) \big\}_{N=1}^{\infty}
\end{equation*}
also forms a tight family. Hence we have
$$
(\sqrt{\Delta t} L^{0,N}, \sqrt{\Delta t} L^{1,N}, \cdots , \sqrt{\Delta t} L^{n,N})
\to
(B^{0},B^{1}, \cdots ,B^{n})
$$
in law as $N \to \infty$.
By the Skorohod representation theorem (see Ikeda-Watanabe \cite{IW}, Theorem 2.7\footnote{
On the space of all right-continuous functions with left-hand side limits,
one can endow so-called the {\it Skorohod topology} which is metrizable and makes the space
a complete separable metric space. For details, see Billingsley \cite{Bi}, Chapter 5.
}), we may assume that the above convergence is realized as an almost sure convergence
on an extended probability space. Note that on the probability space we still have $B^{0}=W$ a.s.

Hence we have
\begin{equation*}
\begin{split}
&
\frac{ (\Delta t)^{((p+1)-p)/2} }{ \sqrt{(p+1)!} }
\sum_{l=1}^{N}
\mathbf{E} \big[
D_{lT/N}^{p+1} X^{N} \big\vert \mathcal{G}_{l-1}^{N}
\big]
H_{p+1} \left( \frac{\Delta W^N_{l}}{ \sqrt{ \Delta t } } \right) \\
&=
\frac{ 1 }{ \sqrt{(p+1)!} }
\sum_{l=1}^{N} \mathbf{E} \big[
D_{t_{l}}^{p+1} X^{N} \big\vert \mathcal{G}_{t_{l-1}}
\big]
\Big\{ (\Delta t)^{1/2} L_{t_{l}}^{p,N} - (\Delta t)^{1/2} L_{t_{l-1}}^{p,N} \Big\} \\
%&=
%\frac{ 1 }{ \sqrt{(p+1)!} }
%\int_{0}^{T} \hspace{-2mm}
%\mathbf{E} \big[
%D_{t}^{p+1} X^{N} \big\vert \mathcal{G}_{t}
%\big]
%dB_{t}^{p}
%\quad \text{in law} \\
&\to
\frac{ 1 }{ \sqrt{(p+1)!} }
\int_{0}^{T} \hspace{-2mm}
\mathbf{E} \big[
D_{t}^{p+1} X \big\vert \mathcal{G}_{t}
\big]
dB_{t}^{p}
\quad\text{in probability as $ N \to \infty $}
\end{split}
\end{equation*}
simultaneously for $p=0,1,\cdots ,n$.
\end{proof}%%%%%%%%%%%%%%%%%%%%%%%%%%%%%%%%%%%%%%%%%%%%%%%%%

Substituting $p=0$ into the inequality (\ref{INE-COR}) in the proof of Theorem \ref{WEAK},
we also obtain the following
\begin{coro}\label{1stEE}%%%%%%%%%%%%%%%%%%%%%%%%%%%%%%%%%%%%%%%%%%%%%%%%
If
$\displaystyle
\sup_{N}
\int_{0}^{T} \hspace{-2mm}
\Vert D_{t}^{2} X^{N} \Vert_{L^{2}}^{2}
dt
<\infty
$
then we have
$$
\big\Vert
X^{N}
-
\Big\{
	\mathbf{E} [X^{N}]
	+
	\mathbf{E} [ D_{lT/N} X^{N} \vert \mathcal{G}_{l-1}^{N} ] \Delta W^N_{l}
\Big\}
\big\Vert_{L^{2}}
=
O(N^{-1/2})
$$
as $N\to \infty$.
\end{coro}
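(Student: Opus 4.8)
The plan is to recognise the random variable whose $L^{2}(\mathbf{P})$-norm is to be estimated as the error term $\mathrm{Err}_{N}(1)$ introduced just before Theorem~\ref{WEAK}, and then to read off the bound directly from the chain of inequalities (\ref{INE-COR}) already established in that proof. Concretely, by the definition of $\mathrm{Err}_{N}(\cdot)$, and since $H_{0}\equiv 1$ and $H_{1}(x)=x$ so that $(\Delta t)^{1/2}H_{1}\!\big(\Delta W^{N}_{l}/\sqrt{\Delta t}\big)=\Delta W^{N}_{l}$, the $m=0$ and $m=1$ contributions are $\mathbf{E}[X^{N}]$ and $\sum_{l=1}^{N}\mathbf{E}[D_{lT/N}X^{N}\mid\mathcal{G}_{l-1}^{N}]\Delta W^{N}_{l}$ respectively, whence
$$
\mathrm{Err}_{N}(1)
=
X^{N}-\Big\{\mathbf{E}[X^{N}]+\sum_{l=1}^{N}\mathbf{E}\big[D_{lT/N}X^{N}\mid\mathcal{G}_{l-1}^{N}\big]\Delta W^{N}_{l}\Big\},
$$
which is exactly the expression appearing in the statement.

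Next I would apply the Discrete Clark--Ocone formula (Theorem~\ref{ACO2}) to $X^{N}$ to obtain
$$
\mathrm{Err}_{N}(1)
=
\sum_{m=2}^{\infty}\sum_{l=1}^{N}\frac{(\Delta t)^{m/2}}{\sqrt{m!}}\,
\mathbf{E}\big[D^{m}_{lT/N}X^{N}\mid\mathcal{G}_{l-1}^{N}\big]\,
H_{m}\!\left(\frac{\Delta W^{N}_{l}}{\sqrt{\Delta t}}\right),
$$
which is precisely the quantity inside the $L^{2}$-norm on the left of (\ref{INE-COR}) specialised to $p=0$ (for $p=0$ one has $m\ge p+2\iff m\ge 2$ and $(\Delta t)^{(m-p)/2}=(\Delta t)^{m/2}$). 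The right-hand side of (\ref{INE-COR}) with $p=0$ then gives
$$
\big\Vert\mathrm{Err}_{N}(1)\big\Vert_{L^{2}}^{2}
\le
(\Delta t)\Big(\sum_{k=1}^{\infty}\frac{1}{k^{2}}\Big)\int_{0}^{T}\Vert D_{t}^{2}X^{N}\Vert_{L^{2}}^{2}\,dt
=
\frac{\pi^{2}T}{6N}\int_{0}^{T}\Vert D_{t}^{2}X^{N}\Vert_{L^{2}}^{2}\,dt,
$$
using $\Delta t=T/N$ and $\sum_{k\ge 1}k^{-2}=\pi^{2}/6$. Bounding the integral by its supremum over $N$, finite by hypothesis, and taking square roots yields $\Vert\mathrm{Err}_{N}(1)\Vert_{L^{2}}=O(N^{-1/2})$, as asserted.

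Since the estimate is extracted verbatim from a bound derived inside the proof of Theorem~\ref{WEAK}, there is no genuine analytic obstacle; the only point demanding care is the bookkeeping — confirming that $p=0$ is an admissible index in (\ref{INE-COR}) and, more importantly, that the derivation of (\ref{INE-COR}) for this index uses nothing beyond the integration-by-parts identity (\ref{DIBP}), the orthonormality of the Hermite polynomials, conditional Jensen's inequality, and the finiteness of $\int_{0}^{T}\Vert D_{t}^{2}X^{N}\Vert_{L^{2}}^{2}\,dt$, so that none of the convergence hypotheses of Theorem~\ref{WEAK} are invoked. Should a fully self-contained argument be preferred, the displayed inequality for $\Vert\mathrm{Err}_{N}(1)\Vert_{L^{2}}^{2}$ can be re-proved in a few lines along exactly those lines, independently of Theorem~\ref{WEAK}.
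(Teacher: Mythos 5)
Your proposal is correct and is essentially identical to the paper's own argument: the paper proves this corollary precisely by identifying the quantity as $\mathrm{Err}_{N}(1)$ and substituting $p=0$ into the inequality (\ref{INE-COR}) from the proof of Theorem \ref{WEAK}, whose derivation (via Theorem \ref{ACO2} and (\ref{DIBP})) uses only the stated second-derivative bound. Your bookkeeping of the $m=0,1$ terms and the resulting constant $\pi^{2}T/(6N)$ is accurate.
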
%%%%%%%%%%%%%%%%%%%%%%%%%%%%%%%%%%%%%%%%%%%%%%%%%%

%%%%%%%%%%%%%%%%%%%%%%%%%%%%%%%%%%%%%%%%%%%%%%%%%%%%%%%%%%%%
%\input{old-ASYMP.tex}*** OLD VERSION of the subsection ASYMP
%\input{old-ASYMP2.tex}*** OLD VERSION of the subsection 2nd version of ASYMP
%%%%%%%%%%%%%%%%%%%%%%%%%%%%%%%%%%%%%%%%%%%%%%%%%%%%%%%%%%%%
%%%%%%%%%%%%%%%%%%%%%%%%%%%%%%%%%%%%%%%%%%%%%%%%%%%%%%%%%%%%

\subsection{The Cases with ``Finite Dimensional" Functionals}\label{ASYMP}
%%%%%%%%%%%%%%%%%%%%%%%%%%%%%%%%%%%%%%%%%%%%%%%%%%%%%%%%%%%%
We have seen that the martingale representation error is of an  
order $ 1/2 $ for a smooth functional. In this section, 
we will observe that 
for a non-smooth functional, the order is related to  
its fractional differentiability if it behaves eventually like a
finite dimensional functional. This parallels with 
the corresponding results in the cases of the tracking error
as we have pointed out in Introduction. 

\if Suppose that $ X $ is a finite dimensional functional; namely,  
$ X = \mathbf{E} [X | \mathcal{G}^m ] $ for some $ m $.
Then we can identify $ X $ with some $ F \in L^2 (\mathbb{R}^m, \mu^m ) $. 
We note that 
the error estimation of such a finite dimensional functional reduces 
to the one dimensional case. In fact, for $ N > m $ we have
\begin{equation*}
\begin{split}
&X - \mathbf{E} [X] 
- \sum_{s} \mathbf{E} [\partial_s X | \mathcal{G}^N_s] 
\Delta W^N_s \\
& \qquad =
\sum_{l=1}^m \big( X_l - X_{l-1}  
- \sum_{\frac{(l-1)}{m} \leq s < \frac{l}{m} } 
\mathbf{E} [\partial_s X_l | \mathcal{G}^N_s] \Delta W^N_s \big) \\
& \qquad =: \sum_{l=1}^m ( \mbox{$ l $-th Mart.Error}),
\end{split}
\end{equation*}
where $ X_l :=  \mathbf{E} [ X | \mathcal{G}^N_{l/m}] 
=  \mathbf{E} [ X | \mathcal{G}^m_{l/m}] $, and notice that 
the $ L^2 $-norm of each ``$ l $-th Mart.Error" is the error with respect to 
a one dimensional functional of $ \Delta W^m_{l/m} $. 
Here we denote $ \Delta W^{M}_s = W_{s+M^{-1}} - W_s $
with $ s = k/M $ for some positive integers $ M $ and $ k \leq M $.  
\fi

Let us start with one-dimensional cases. 
Let $ F \in L^2 (\mathbb{R}, \mu_T) $, where $ \mu_T $ is the Gaussian measure 
with variance $ T > 0 $.  
%Here we rearrange $ N \Delta t = T $ and 
%study asymptotics when $ N \to \infty $.  
Then, since 
\begin{equation*}
\frac{\partial^k}{\partial x_l^k} F (x_1 + \cdots + x_N ) 
= F^{(k)} (x_1 + \cdots + x_N ), 
\end{equation*}
we have, for $ k_1 + \cdots +k_N = n $, 
\begin{equation*}
\begin{split}
&
\mathbf{E} \big[
D_{t_1^{(N)}}^{k_{1}} \cdots D_{t_l^{(N)}}^{k_{l}} F (W_T)
\big]^{2}
=
\mathbf{E} \big[ F^{(n)}(W_{T}) \big]^{2} \\
&=
\frac{n!}{T^{n}}
\mathbf{E} \big[ F(W_{T}) H_{n}\left( \frac{W_{T}}{\sqrt{T}} \right) \big]^{2}
=
\frac{n!}{T^{n}} \Vert J_{n}F (W_T) \Vert_{L^{2}}^{2},
\end{split}
\end{equation*}
irrespective of $ l $ and $ N $. 
Here $ J_n $ is the projection to the $ n $-th chaos. 
With this observation in mind, we understand 
the following property as a finite-dimensionality of a sequence;
let $ \{ F^N \} $ be such that each $ F^N $ being $ \mathcal{G}^N $-measurable
and that
\begin{equation}\label{fdp}
\begin{split}
& \sup_{ k_{1}+\cdots + k_{N} = n }
(E [D_{t_1^{(N)}}^{k_1} \cdots D^{k_N}_{t_N^{(N)}}F])^2
=
O \left( \frac{ n! \Vert J_{n}F^{N} \Vert^{2} }{ T^{n} } \right) \\
& \hspace{2cm} \text{uniformly in $n=2,3,\cdots$ as $N \to \infty$.}
\end{split}
\end{equation}
Note that a sequence composed of a one dimensional functional $ F (W_T) $
satisfies the above property trivially.
Furthermore, the multi dimensional case where 
$ F^N \equiv F, N=1,2,\cdots $ for 
some $ F \in L^2 (\mathcal{G}^m_m) $, $ 2 \leq m <\infty $
satisfies (\ref{fdp}) as well. 
In fact, for arbitrary non-negative integers $ k_1, \cdots, k_N $
with $ k_1 + \cdots + k_N =n $, 
the relation  
\begin{equation*}
(E [D_{t_1^{(N)}}^{k_1} \cdots D^{k_N}_{t_N^{(N)}}F])^2
= (E[D_{t_1^{(m)}}^{l_1} \cdots D_{t_m^{(m)}}^{l_m} F] )^2,
\end{equation*}
where $ l_j = k_{N'(j-1)+1} + \cdots +  k_{N'(j-1)+N'} $,  
implies 
\begin{equation*}
\begin{split}
& (E [D_{t_1^{(N)}}^{k_1} \cdots D^{k_N}_{t_N^{(N)}}F])^2 
 \leq \frac{l_1 !\cdots l_m !}{T^n}  m^n  \Vert J_n F \Vert^2 \\
& = \frac{l_1 !\cdots l_m !}{T^n}
\sum_{l'_1+\cdots+l'_m=n} \frac{n!}{l'_1 ! \cdots l'_m !} \Vert J_n F \Vert^2 \\
&\leq \frac{n!}{T^n} \Vert  J_n F \Vert^2. 
\end{split}
\end{equation*}

\begin{thm}\label{ErrEst}%%%%%%%%%%%%%%%%%%%%%%%%%%%%%%%%%%%
Suppose that we are 
given a sequence of $F^{N} \in \mathbb{D}_{2,-\infty}^{(N)}$, $N=1,2,\cdots$
satisfying 
$$
\sup_{N} \Vert F^{N} \Vert_{\mathbb{D}_{2,s}}^{2} < \infty
$$
for some $0 \leq s \leq 1$ and 
the ``finite-dimensional property" (\ref{fdp}). 
Then
$$
\big\Vert \mbox{{\rm 1-Mart.Err}}(F^{N}) \big\Vert_{L^{2}}^{2}
=
O(N^{-s/2})
\quad \text{as $N \to \infty$.}
$$
\end{thm}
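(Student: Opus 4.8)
Write $\mbox{{\rm 1-Mart.Err}}(F^{N})=F^{N}-\mathbf{E}[F^{N}]-\sum_{l=1}^{N}\mathbf{E}[\partial_{l}F^{N}\mid\mathcal{G}^{N}_{l-1}]\Delta W^{N}_{l}$, i.e. $\mathrm{Err}_{N}(1)$ for $X^{N}=F^{N}$. Since $s\ge 0$ forces $\|F^{N}\|_{L^{2}(\mu^{N})}\le\|F^{N}\|_{\mathbb{D}_{2,s}}<\infty$, Theorem \ref{ACO2} applies to $F^{N}$; as the $m=1$ term of the right-hand side of (\ref{AE1}) is exactly $\sum_{l}\mathbf{E}[\partial_{l}F^{N}\mid\mathcal{G}^{N}_{l-1}]\Delta W^{N}_{l}$, subtracting it leaves
$$
\mbox{{\rm 1-Mart.Err}}(F^{N})=\sum_{m\ge 2}\sum_{l=1}^{N}\frac{(\Delta t)^{m/2}}{\sqrt{m!}}\,\mathbf{E}\big[\partial^{m}_{l}F^{N}\,\big|\,\mathcal{G}^{N}_{l-1}\big]\,H_{m}\!\left(\frac{\Delta W^{N}_{l}}{\sqrt{\Delta t}}\right).
$$
By (\ref{cond1}) and (\ref{DIBP}) the $(l,m)$-summand is the $L^{2}(\mu^{N})$-orthogonal projection of $F^{N}$ onto the span of the orthonormal vectors $\prod_{i<l}H_{k_{i}}(\Delta W^{N}_{i}/\sqrt{\Delta t})\cdot H_{m}(\Delta W^{N}_{l}/\sqrt{\Delta t})$, and distinct $(l,m)$ with $m\ge 2$ span mutually orthogonal subspaces. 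Hence, writing the chaos expansion $F^{N}=\sum_{\alpha}c_{\alpha}\prod_{i}H_{k_{i}}(\Delta W^{N}_{i}/\sqrt{\Delta t})$ over multi-indices $\alpha=(k_{1},\dots,k_{N})$ and using Parseval,
$$
\big\|\mbox{{\rm 1-Mart.Err}}(F^{N})\big\|_{L^{2}}^{2}=\sum_{\alpha\in A}c_{\alpha}^{2},\qquad A:=\Big\{\alpha\ne 0:\ k_{l^{\ast}(\alpha)}\ge 2,\ l^{\ast}(\alpha):=\max\{i:k_{i}\ne 0\}\Big\}.
$$

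To bring in the finite-dimensional property (\ref{fdp}) I would rewrite the coefficients through Malliavin derivatives. Iterating the Gaussian Rodrigues identity $H_{k}(x_{l}/\sqrt{\Delta t})\,p^{N}=(-1)^{k}(\Delta t)^{k/2}(k!)^{-1/2}\partial^{k}_{l}p^{N}$ --- precisely the computation that closes the proof of Theorem \ref{ACO2} --- gives, for $|\alpha|:=k_{1}+\cdots+k_{N}=n$,
$$
c_{\alpha}=\frac{(\Delta t)^{n/2}}{\sqrt{k_{1}!\cdots k_{N}!}}\,\mathbf{E}\big[D^{k_{1}}_{t^{(N)}_{1}}\cdots D^{k_{N}}_{t^{(N)}_{N}}F^{N}\big].
$$
Squaring, using $\Delta t=T/N$, and invoking (\ref{fdp}) --- whose uniformity in $n$ provides $C>0$ and $N_{0}$ with $(\mathbf{E}[D^{k_{1}}_{t_{1}}\cdots D^{k_{N}}_{t_{N}}F^{N}])^{2}\le C\,n!\,\|J_{n}F^{N}\|^{2}T^{-n}$ for all $N\ge N_{0}$ and all $|\alpha|=n$ --- yields
$$
c_{\alpha}^{2}\ \le\ \frac{C\,n!\,\|J_{n}F^{N}\|^{2}}{N^{n}\,k_{1}!\cdots k_{N}!}\qquad(|\alpha|=n\ge 2),\qquad\text{where }\ \|J_{n}F^{N}\|^{2}=\sum_{|\alpha|=n}c_{\alpha}^{2}.
$$

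Finally I would sum over $A$ degree by degree. Splitting $\{\alpha:|\alpha|=n\}$ into $A$ and its complement (the multi-indices whose last nonzero entry is $1$), the multinomial theorem gives $\sum_{|\alpha|=n}(\prod k_{i}!)^{-1}=N^{n}/n!$ and $\sum_{\alpha\notin A,\,|\alpha|=n}(\prod k_{i}!)^{-1}=\tfrac1{(n-1)!}\sum_{l=0}^{N-1}l^{n-1}\ge(N-1)^{n}/n!$ (comparing $\sum_{l=1}^{N-1}l^{n-1}$ with $\int_{0}^{N-1}x^{n-1}\,dx$), so that
$$
\sum_{\alpha\in A,\,|\alpha|=n}\frac{1}{k_{1}!\cdots k_{N}!}\ \le\ \frac{N^{n}-(N-1)^{n}}{n!}\ =\ \frac{N^{n}}{n!}\Big(1-\big(1-\tfrac1N\big)^{n}\Big)\ \le\ \frac{N^{n}}{n!}\min\!\Big(1,\frac nN\Big)
$$
by Bernoulli's inequality. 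Combining with the bound on $c_{\alpha}^{2}$ and using $\min(1,t)\le t^{s/2}$ ($0\le s\le 1$, $t\ge 0$) together with $n^{s/2}\le(1+n)^{s}$,
$$
\big\|\mbox{{\rm 1-Mart.Err}}(F^{N})\big\|_{L^{2}}^{2}\ \le\ C\sum_{n\ge 2}\|J_{n}F^{N}\|^{2}\min\!\Big(1,\frac nN\Big)\ \le\ \frac{C}{N^{s/2}}\sum_{n}(1+n)^{s}\|J_{n}F^{N}\|^{2}\ =\ \frac{C\,\|F^{N}\|_{\mathbb{D}_{2,s}}^{2}}{N^{s/2}}\ \le\ \frac{C\,\sup_{N}\|F^{N}\|_{\mathbb{D}_{2,s}}^{2}}{N^{s/2}},
$$
which is $O(N^{-s/2})$. (Using $\min(1,n/N)\le(n/N)^{s}$ instead, the same computation even gives $O(N^{-s})$.)

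\medskip

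\noindent The step I expect to be the crux is the bookkeeping: identifying the error precisely as $\sum_{\alpha\in A}c_{\alpha}^{2}$ over the multi-indices whose \emph{last} nonzero component is $\ge 2$, and then recognizing that restricting the multinomial identity to $A$ produces exactly the factor $1-(1-1/N)^{n}=O(n/N)$ per chaos level --- this is what converts the subtraction of the order-$0$ and order-$1$ terms of (\ref{AE1}) into genuine decay in $N$. Once that is in place the fractional smoothness enters only through the elementary interpolation $\min(1,n/N)\le(n/N)^{s/2}$, and all that remains is to be sure the constant in (\ref{fdp}) is uniform in $n$, so that the series in $n$ is summable and controlled by $\|F^{N}\|_{\mathbb{D}_{2,s}}^{2}$.
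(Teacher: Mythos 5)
Your proposal is correct and follows essentially the same route as the paper's proof: expand the error over the discrete chaos indexed by multi-indices whose last nonzero entry is at least $2$, bound each squared coefficient via (\ref{fdp}), sum each chaos level with the multinomial theorem to get the weight $1-\tfrac nN\sum_{l=0}^{N-1}(l/N)^{n-1}\le\min(1,n/N)$, and interpolate with the fractional exponent; your Bernoulli/integral-comparison bookkeeping is just a cosmetic variant of the paper's exact estimate $I_{n,N}\le\min(1/n,1/N)$ followed by $I_{n,N}=I_{n,N}^{s}I_{n,N}^{1-s}$. Your parenthetical observation that the argument in fact gives $O(N^{-s})$ for the squared $L^{2}$-norm matches what the paper's own final inequality delivers.
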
%%%%%%%%%%%%%%%%%%%%%%%%%%%%%%%%%%%%%%%%%%%%%%%%%%%
\if
The proof is based on the following lemma, which is  
fundamental for the estimation of the martingale error
since it involves only equalities. 
\begin{lemma}\label{Er-Norm}%%%%%%%%%%%%%%%%%%%%%%%%%%%%%%
For every $F \in \mathbb{D}_{2,-\infty}^{(N)}$, we have
$$
\Vert \mbox{{\rm 1-Mart.Err}}(F) \Vert_{L^{2}}^{2}
=
\sum_{k=2}^{\infty} \frac{ (\Delta t)^{ k } }{ k! }
\Big\{
I_{N}^{k} ( F )
-
k \sum_{l=1}^{N}
I_{l-1}^{ k-1 } ( \partial_{l} F )
\Big\}
$$
where
$\displaystyle
I_{l}^{n}(F)
:=
\sum_{ k_{1} + \cdots + k_{l} = n }^{\infty}
\frac{ n! }{ k_{1}! \cdots k_{l-1}! k_{l}! }
\mathbf{E} \big[
\partial_{1}^{k_{1}} \cdots \partial_{l}^{k_{l}} F
\big]^{2}.
$
\end{lemma}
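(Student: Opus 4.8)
The plan is to prove the identity purely by expanding everything in the orthonormal Hermite-product basis and reading off squared coefficients, so that no cancellation of possibly infinite quantities is ever needed — this is important because for a generalized $F \in \mathbb{D}_{2,-\infty}^{(N)}$ the left-hand side may be $+\infty$. First I would record the multi-index integration-by-parts formula obtained by iterating the generalized integration by parts of Section~\ref{GWF} (exactly as in the final display of the proof of Theorem~\ref{ACO2}): writing $c_{\mathbf{k}} := \mathbf{E}\big[F \prod_{i=1}^{N} H_{k_i}(\Delta W_i/\sqrt{\Delta t})\big]$ for a multi-index $\mathbf{k}=(k_1,\dots,k_N)$, one has $c_{\mathbf{k}} = \frac{(\Delta t)^{|\mathbf{k}|/2}}{\sqrt{\mathbf{k}!}}\,\mathbf{E}[\partial^{\mathbf{k}} F]$, where $|\mathbf{k}|=\sum_i k_i$, $\mathbf{k}!=\prod_i k_i!$, and $\partial^{\mathbf{k}}=\partial_1^{k_1}\cdots\partial_N^{k_N}$, all expectations read in the generalized sense.

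Next I would invoke the ``sorting according to the highest non-zero index'' from the proof of Theorem~\ref{ACO2}. That argument shows the basis elements split according to the value $m$ of the entry at their top non-zero position, and that the resulting $(m,l)$-terms are mutually orthogonal in $L^2(\mu^N)$. The part with $m=1$ is exactly $M := \sum_{l=1}^{N} \mathbf{E}[\partial_l F \,|\, \mathcal{G}_{l-1}]\,\Delta W_l$ (note $H_1(x)=x$, so the $m=1,l$ term is $\sqrt{\Delta t}\,\mathbf{E}[\partial_l F|\mathcal{G}_{l-1}]\,\Delta W_l/\sqrt{\Delta t}$), and the part with $m\geq 2$ is exactly $\text{1-Mart.Err}(F)=F-\mathbf{E}[F]-M$. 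Hence $\Vert \text{1-Mart.Err}(F)\Vert_{L^2}^2$ is the sum of $c_{\mathbf{k}}^2$ over all $\mathbf{k}$ whose top non-zero entry is $\geq 2$; grouping by total order $n=|\mathbf{k}|$,
\[
\Vert \text{1-Mart.Err}(F) \Vert_{L^{2}}^{2}
= \sum_{n \geq 1} \Big(
\sum_{|\mathbf{k}|=n} c_{\mathbf{k}}^{2}
- \sum_{\substack{|\mathbf{k}|=n \\ \text{top entry}=1}} c_{\mathbf{k}}^{2}
\Big),
\]
a difference of non-negative quantities within each fixed $n$ (since for $n\geq 1$ the top entry is always $\geq 1$).

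Then I would match the two inner sums to the $I$-functionals via the coefficient formula. The first gives $\sum_{|\mathbf{k}|=n} c_{\mathbf{k}}^2 = (\Delta t)^n \sum_{|\mathbf{k}|=n} \frac{1}{\mathbf{k}!}\mathbf{E}[\partial^{\mathbf{k}}F]^2 = \frac{(\Delta t)^n}{n!} I_N^n(F)$. For the second, a multi-index with top entry $1$ and $|\mathbf{k}|=n$ is $\mathbf{k}=(k_1,\dots,k_{l-1},1,0,\dots,0)$ with $k_1+\cdots+k_{l-1}=n-1$ for some $l$; then $\mathbf{k}!=\prod_{i<l}k_i!$ and $\partial^{\mathbf{k}}F=\partial_1^{k_1}\cdots\partial_{l-1}^{k_{l-1}}\partial_l F$, so summing over $l$ and over $\{k_i\}_{i<l}$ yields $\frac{(\Delta t)^n}{(n-1)!}\sum_{l=1}^N I_{l-1}^{n-1}(\partial_l F)=\frac{(\Delta t)^n}{n!}\,n\sum_{l=1}^N I_{l-1}^{n-1}(\partial_l F)$. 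Substituting produces the summand $\frac{(\Delta t)^n}{n!}\big\{I_N^n(F)-n\sum_l I_{l-1}^{n-1}(\partial_l F)\big\}$ for each $n$. Finally I would check that the $n=1$ term vanishes: $I_N^1(F)=\sum_l \mathbf{E}[\partial_l F]^2$ and $\sum_l I_{l-1}^0(\partial_l F)=\sum_l \mathbf{E}[\partial_l F]^2$ coincide, so the sum may start at $n=2$, matching the statement.

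The main difficulty is bookkeeping rather than analysis: one must ensure every regrouping is legitimate in the generalized setting. Since each term on both sides is a non-negative multiple of some $\mathbf{E}[\partial^{\mathbf{k}}F]^2$, Tonelli's theorem licenses all rearrangements and the identity holds as an equality in $[0,+\infty]$; crucially, the error norm is computed directly as the sum of the surviving squared coefficients, so no subtraction $\Vert F-\mathbf{E}[F]\Vert^2-\Vert M\Vert^2$ of possibly infinite quantities is ever performed. The only genuinely delicate point is the validity of the iterated coefficient formula for $c_{\mathbf{k}}$ when $F$ is merely a tempered distribution, which I expect to follow from the definitions in Section~\ref{GWF} exactly as in the proof of Theorem~\ref{ACO2}.
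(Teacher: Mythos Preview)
Your argument is correct and rests on the same foundation as the paper's proof: Parseval in the Hermite product basis together with the observation from Theorem~\ref{ACO2} that the error picks out exactly the multi-indices whose highest non-zero entry is at least $2$, followed by the coefficient identity $c_{\mathbf{k}}^{2}=\frac{(\Delta t)^{|\mathbf{k}|}}{\mathbf{k}!}\,\mathbf{E}[\partial^{\mathbf{k}}F]^{2}$.

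Where you differ is in the combinatorial packaging. The paper first groups by the top position $l$ and the value $k_{l}\geq 2$ of that top entry, rewrites the remaining sum as $I_{l-1}^{m-k_{l}}(\partial_{l}^{k_{l}}F)$, invokes the binomial-type recursion
\[
I_{l}^{m}(F)=\sum_{k=0}^{m}\binom{m}{k}I_{l-1}^{m-k}(\partial_{l}^{k}F)
\]
to recast $\sum_{k_{l}\geq 2}$ as $I_{l}^{m}(F)-I_{l-1}^{m}(F)-mI_{l-1}^{m-1}(\partial_{l}F)$, and then telescopes in $l$. You instead group by total degree $n$ and split ``top entry $\geq 2$'' $=$ ``all degree-$n$ multi-indices'' minus ``top entry $=1$'', recognizing the first piece directly as $\frac{(\Delta t)^{n}}{n!}I_{N}^{n}(F)$ and the second as $\frac{(\Delta t)^{n}}{n!}\,n\sum_{l}I_{l-1}^{n-1}(\partial_{l}F)$. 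This bypasses both the recursion and the telescoping, and your check that the $n=1$ summand vanishes is exactly what lets the sum start at $n=2$. Your remark that each $n$-summand is a difference of two non-negative series with the second contained termwise in the first is a nice touch: it makes the identity meaningful in $[0,+\infty]$ for general $F\in\mathbb{D}_{2,-\infty}^{(N)}$ without ever subtracting potentially infinite quantities, a point the paper's proof passes over.
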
%%%%%%%%%%%%%%%%%%%%%%%%%%%%%%%%%%%%%%%%%%%%%%%%%%
\begin{proof}%%%%%%%%%%%%%%%%%%%%%%%%%%%%%%%%%%%%%%%%%%%%%%%
By Theorem \ref{ACO2}, we see that
\begin{equation*}
\begin{split}
&
\Vert \mbox{{\rm 1-Mart.Err}}(F) \Vert_{L^{2}}^{2} \\
&=
\sum_{l=1}^{N} \sum_{k_{l}=2}^{\infty}
\sum_{ k_{1}, \cdots , k_{l-1} = 0 }^{\infty}
\frac{ (\Delta t)^{ k_{1} + \cdots + k_{l-1} + k_{l}} }{ k_{1}! \cdots k_{l-1}! k_{l}! }
\mathbf{E} \big[
\partial_{1}^{k_{1}} \cdots \partial_{l-1}^{k_{l-1}} \partial_{l}^{k_{l}} F
\big]^{2} \\
&=
\sum_{l=1}^{N} \sum_{k_{l}=2}^{\infty}
\sum_{n=0}^{\infty}
\frac{ (\Delta t)^{ n+k_{l} } }{ n! k_{l}! }
I_{l-1}^{n} \big( \partial_{l}^{k_{l}} F \big) \\
&=
\sum_{l=1}^{N} \sum_{m=2}^{\infty} \frac{ (\Delta t)^{ m } }{ m! }
\sum_{k_{l}=2}^{m}
\binom{ m }{ k_{l} }
I_{l-1}^{ m-k_{l} } \big( \partial_{l}^{k_{l}} F \big) .
\end{split}
\end{equation*}
One can check easily the following binomial theorem-like formula:
$$
I_{l}^{n}(F)
=
\sum_{k=0}^{n} \binom{n}{k} I_{l-1}^{n-k} \big( \partial_{l}^{k}F \big) .
$$
Using this, we see that
\begin{equation*}
\begin{split}
&
\Vert \mbox{{\rm 1-Mart.Err}}(F) \Vert_{L^{2}}^{2} \\
&=
\sum_{m=2}^{\infty} \frac{ (\Delta t)^{ m } }{ m! }
\sum_{l=1}^{N}
\Big\{
I_{l}^{m} \big( F \big)
-
I_{l-1}^{m} \big( F \big)
-
m I_{l-1}^{ m-1 } \big( \partial_{l} F \big)
\Big\} \\
&=
\sum_{m=2}^{\infty} \frac{ (\Delta t)^{ m } }{ m! }
\Big\{
I_{N}^{m} \big( F \big)
-
m \sum_{l=1}^{N}
I_{l-1}^{ m-1 } \big( \partial_{l} F \big)
\Big\} .
\end{split}
\end{equation*}

\end{proof}%%%%%%%%%%%%%%%%%%%%%%%%%%%%%%%%%%%%%%%%%%%%%%%%%
\fi

\begin{proof}%[Proof of Theorem \ref{ErrEst}]%%%%%%%%%%%%%%%%%%%%%%%%%%%%%%%%%%%%%%%%%%%%%%%
By observing (\ref{PCO2}), we notice that 
\begin{equation*}
\begin{split}
& \Vert \mbox{{\rm 1-Mart.Err}}(F^{N}) \Vert_{L^{2}}^{2} \\
&=
\sum_{l=1}^{N}
\sum_{\substack{k_{1} + \cdots + k_{l}=n \\ k_{l} \geq 2}}
\frac{ n! }{ k_{1}! \cdots k_{l}! } (\Delta t)^{n}
\mathbf{E} \big[
\partial_{1}^{k_{1}} \cdots \partial_{l}^{k_{l}} F^{N}
\big]^{2}
\end{split}
\end{equation*}
for each $n=2,3,\cdots$.
By the assumption, there is a constant $C>0$ such that
$$
\sup_{ k_{1}+\cdots + k_{l} = n }
\mathbf{E} \big[ \partial_{1}^{k_{1}} \cdots \partial_{l}^{k_{l}} F^{N} \big]^{2}
\leq
C \frac{ n! \Vert J_{n}F^{N} \Vert^{2} }{ T^{n} }
$$
for each $n=2,3,\cdots$ and $N=1,2,\cdots$ and 
the multinomial theorem yields that
\begin{equation*}
\begin{split}
&\sum_{ \substack{k_{1} + k_{2} + \cdots + k_{l} = n \\ k_{l} \geq 2} }
\frac{ n! }{ k_{1}! \cdots k_{l}! } ( \Delta t )^{n} \\
&=
\left( \frac{lT}{N} \right)^{n}
-
\left( \frac{ (l-1)T }{ N } \right)^{n}
-
n \frac{T}{N} \left( \frac{ (l-1)T }{ N } \right)^{n-1}.
\end{split}
\end{equation*}
Putting them together, we have
\begin{equation}\label{ineq1}
\begin{split}
&
\big\Vert \mbox{{\rm 1-Mart.Err}}(F^{N}) \big\Vert_{L^{2}}^{2} \\
&\leq
C
\sum_{n=2}^{\infty}
\Big\{
1 - n \frac{1}{N} \sum_{l=0}^{N-1} \left( \frac{ l }{ N } \right)^{n-1}
\Big\}
\Vert J_{n} F^{N} \Vert_{L^{2}}^{2} \\
&=
CN^{-s}
\sum_{n=2}^{\infty} \frac{ N^{s} }{ n^{s-1} }
\Big\{
\frac{1}{n} - \frac{1}{N} \sum_{l=0}^{N-1} \left( \frac{ l }{ N } \right)^{n-1}
\Big\}
n^{s} \Vert J_{n} F^{N} \Vert_{L^{2}}^{2}
\end{split}
\end{equation}
for each $s \in \mathbb{R}$. 

On the other hand, since we have 
\begin{equation*}
\begin{split}
&I_{n,N}
:=
\frac{1}{n} - \frac{1}{N} 
\sum_{l=0}^{N-1} \left( \frac{ l }{ N } \right)^{n-1} \\
&=
\sum_{l=0}^{N-1} \int_{ l/N }^{ (l+1)/N }
\Big\{ x^{n-1} - \Big( \frac{l}{N} \Big)^{n-1} \Big\}
dx
> 0,
\end{split}
\end{equation*}
$ I_{n,N} \leq 1/n  $, and 
\begin{equation*}
\begin{split}
I_{n,N}
\leq
\frac{1}{N} \sum_{l=0}^{N-1}
\Big\{
\left( \frac{l+1}{N} \right)^{n-1} - \left( \frac{l}{N} \right)^{n-1}
\Big\}
=
\frac{1}{N},
\end{split}
\end{equation*}
we notice that
\begin{equation}\label{ineq2}
I_{n,N}
=
I_{n,N}^{s} I_{n,N}^{1-s}
\leq
\left( \frac{1}{N} \right)^{s} \left( \frac{1}{k} \right)^{1-s}
\end{equation}
for every $0\leq s \leq 1$.

By (\ref{ineq1}) and (\ref{ineq2}), we finally have  
\begin{equation*}
\begin{split}
&
\big\Vert \mbox{{\rm 1-Mart.Err}}(F^{N}) \big\Vert_{L^{2}}^{2} \\
&\hspace{5mm}\leq
CN^{-s} \sum_{n=2}^{\infty} n^{s} \Vert J_{n}F^{N} \Vert_{L^{2}}^{2}
\leq
CN^{-s} \sup_{N} \Vert F^{N} \Vert_{\mathbb{D}_{2,s}^{(N)}}^{2}.
\end{split}
\end{equation*}
\end{proof}%%%%%%%%%%%%%%%%%%%%%%%%%%%%%%%%%%%%%%%%%%%%%%%%%

\if0%%%%%%%%%%%%%%%%%%%%%%%%%%%%%%%%%%%%%%%%%%%%%%%%%%%%%%%%%%%%%%%%%% COMMENTED OUT BEGIN
Bearing in mind the left continuity of the mapping
$s^{\prime} \mapsto \Vert F \Vert_{ \mathbb{D}_{2,s^{\prime}}^{(1)} }$,
we have the following 
%we can improve Corollary \ref{Sob-Ord} slightly as follows.
\begin{coro}%%%%%%%%%%%%%%%%%%%%%%%%%%%%%%%%%%%%%%%%%%%%%%%%%
Assume that $F\in \mathbb{D}_{2,s}^{(1)}$ for some $0 \leq s \leq 1$.
Then there exists $\varepsilon >0$ such that
$$
\Vert
\mathrm{1\text{-}Mart.Err}
\Vert_{L^{2}}
=
O(N^{- \min \{ s+\varepsilon ,  1 \} /2})
$$
as $N \to \infty$.
\end{coro}%%%%%%%%%%%%%%%%%%%%%%%%%%%%%%%%%%%%%%%%%%%%%%%%%%

\begin{proof}
The left continuity of
$s^{\prime} \mapsto \Vert F \Vert_{\mathbb{D}_{2,s^{\prime}}^{(1)}}$
implies that there exists $\varepsilon >0$ such that $F \in \mathbb{D}_{2,s+\varepsilon}^{(1)}$
and then by Theorem \ref{ErrEst}
can be applied to guarantee the convergence rate of
$O(N^{ -\min \{ s+\varepsilon , 1 \} /2})$.
\end{proof}
\fi%%%%%%%%%%%%%%%%%%%%%%%%%%%%%%%%%%%%%%%%%%%%%%%%%%%%%%%%%%%%%%%%%%% COMMENTED OUT END

\subsection{A Study on Additive Functionals}\label{ADDsec}
In this subsection, we study sequences of
``additive functionals", 
\begin{equation}\label{Adf}
\begin{split}
& F^{N} := \sum_{i=1}^{N} f_N (t_i, W_{t^{(N)}_{i}}) \Delta t \\
& \text{where $ f_N (t_i, \cdot) $, $ i=1,\cdots N $
is a sequence in $ \mathbb{D}^{(1)}_{2,-\infty} $.}
\end{split}
\end{equation}
We are interested in the conditions 
for the sequence to be ``finite-dimensional" in the sense of 
(\ref{fdp}). 

We define an index to control the finite-dimensionality. 
Let 
\begin{equation*}
A_l := (\sum_{i=l}^N 
i^{-n/2} E[ f_N (t_i,W_{t_i})H_n (W_{t_i}/\sqrt{t_i})])^2
\end{equation*}
and
\begin{equation*}
\alpha_{N,n}(F^N) := \begin{cases}
0 &  \sum_{l=1}^N A_l\{l^n - (l-1)^n\} = 0 \\
\frac{N^n \sup A_l }{\sum_{l=1}^N A_l\{l^n - (l-1)^n\}} 
& \text{otherwise}.
\end{cases} 
\end{equation*}
Then, we have the following criterion. 
\begin{prop}%%%%%%%%%%%%%%%%%%%%%%%%%%%%%%%%%%%%%%%%%%%%%%%%
The sequence $ \{ F_N \} $ of (\ref{Adf}) satisfies (\ref{fdp})
if and only if 
\begin{equation*}
\sup_n \sup_N \alpha_{n,N} (F_N) < \infty. 
\end{equation*}
\end{prop}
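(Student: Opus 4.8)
The plan is to evaluate both sides of the finite–dimensionality condition (\ref{fdp}) explicitly for an additive functional $F^N=\sum_{i=1}^N f_N(t_i,W_{t_i})\Delta t$ and to notice that their quotient is exactly $\alpha_{n,N}(F^N)$, so that (\ref{fdp}) is nothing but uniform boundedness of that quotient. For the numerator, fix $n\ge 2$ and a multi-index $(k_1,\dots,k_N)$ with $k_1+\cdots+k_N=n$, and set $\ell(k):=\max\{j:k_j\ge 1\}$. Since $W_{t_i}=\Delta W^N_1+\cdots+\Delta W^N_i$ involves only the first $i$ coordinates, $\partial_1^{k_1}\cdots\partial_N^{k_N}$ annihilates the summand $f_N(t_i,W_{t_i})$ when $i<\ell(k)$ and equals $f_N^{(n)}(t_i,W_{t_i})$ (an $x$-derivative, in the distribution sense of Section~\ref{GWF}) when $i\ge\ell(k)$; hence $\partial_1^{k_1}\cdots\partial_N^{k_N}F^N=\Delta t\sum_{i=\ell(k)}^N f_N^{(n)}(t_i,W_{t_i})$. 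The one-dimensional Gaussian integration by parts — the special case of (\ref{DIBP}) — gives $\mathbf E[f_N^{(n)}(t_i,W_{t_i})]=\sqrt{n!}\,t_i^{-n/2}\mathbf E[f_N(t_i,W_{t_i})H_n(W_{t_i}/\sqrt{t_i})]$, and with $t_i=i\Delta t$ everything collapses to
\[
\mathbf E\big[\partial_1^{k_1}\cdots\partial_N^{k_N}F^N\big]^2=\frac{n!}{(\Delta t)^{n-2}}\,A_{\ell(k)} .
\]
As every $\ell\in\{1,\dots,N\}$ is realized (e.g.\ by $k_\ell=n$), the left-hand side of (\ref{fdp}) equals $\tfrac{n!}{(\Delta t)^{n-2}}\sup_{1\le\ell\le N}A_\ell$.

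For the denominator I would use the general identity $\|J_nG\|^2=(\Delta t)^n\sum_{k_1+\cdots+k_N=n}\frac{1}{k_1!\cdots k_N!}\mathbf E[\partial_1^{k_1}\cdots\partial_N^{k_N}G]^2$, valid for $G\in\mathbb D_{2,-\infty}^{(N)}$, which follows by iterating (\ref{DIBP}) over the $N$ coordinates. Inserting the previous display, grouping the multi-indices according to the value of $\ell(k)$, and invoking the multinomial theorem in the form $\sum_{k_1+\cdots+k_\ell=n,\ k_\ell\ge 1}\frac{n!}{k_1!\cdots k_\ell!}=\ell^n-(\ell-1)^n$, one obtains
\[
\|J_nF^N\|^2=(\Delta t)^2\sum_{\ell=1}^N A_\ell\,\{\ell^n-(\ell-1)^n\} .
\]

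Dividing the first display by $n!\,\|J_nF^N\|^2/T^n$ and using $(\Delta t)^{-n}T^n=N^n$, all common factors cancel and
\[
\frac{\sup_{k_1+\cdots+k_N=n}\mathbf E[\partial_1^{k_1}\cdots\partial_N^{k_N}F^N]^2}{n!\,\|J_nF^N\|^2/T^n}=\frac{N^n\sup_{\ell}A_\ell}{\sum_{\ell}A_\ell\{\ell^n-(\ell-1)^n\}}=\alpha_{n,N}(F^N),
\]
where in the degenerate case $\sum_\ell A_\ell\{\ell^n-(\ell-1)^n\}=0$ one has $A_\ell=0$ for all $\ell$ (since $\ell^n-(\ell-1)^n\ge 1$), so both sides vanish, consistently with the convention $\alpha_{n,N}:=0$. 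Condition (\ref{fdp}) is precisely the statement that this quotient stays bounded uniformly in $n=2,3,\dots$ as $N\to\infty$, i.e.\ $\sup_n\sup_N\alpha_{n,N}(F^N)<\infty$, which gives the equivalence.

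The only genuinely delicate point is the second display: the $n$-th homogeneous chaos components of the various summands $f_N(t_i,W_{t_i})$, $i\ge\ell$, all feed into the coefficient of one and the same Hermite product $\prod_j H_{k_j}(\Delta W^N_j/\sqrt{\Delta t})$, and this superposition has to be organized correctly. Routing the computation through the $\partial^k$-representation of $\|J_n\cdot\|^2$ (itself a consequence of (\ref{DIBP})) reduces this bookkeeping to the single multinomial identity for $\ell^n-(\ell-1)^n$; the remaining care is only to confirm that the distributional derivatives, the Gaussian integration by parts, and the chaos coefficients are all meaningful for $f_N(t_i,\cdot)\in\mathbb D_{2,-\infty}^{(1)}$ in the generalized sense of Section~\ref{GWF}.
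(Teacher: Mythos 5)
Your proposal is correct and follows essentially the same route as the paper: compute $\mathbf E[\partial_1^{k_1}\cdots\partial_N^{k_N}F^N]^2=n!(\Delta t)^{2-n}A_{\ell(k)}$ via the additive structure and one-dimensional Gaussian integration by parts (\ref{DIBP}), express $\Vert J_nF^N\Vert^2$ through the chaos coefficients and the multinomial identity to get $(\Delta t)^2\sum_{l}A_l\{l^n-(l-1)^n\}$, and identify the quotient with $\tfrac{n!}{T^n}\alpha_{n,N}(F^N)$, handling the degenerate case exactly as the paper does. No gaps.
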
%%%%%%%%%%%%%%%%%%%%%%%%%%%%%%%%%%%%%%%%%%%%%%%%%%
\begin{proof}
For arbitrary non-negative integers $ k_1, \cdots, k_N $
with $ k_1 + \cdots + k_N =n $, we have
\begin{equation*}
\begin{split}
& E [D_{t_1}^{k_1} \cdots D^{k_N}_{t_N}F^N]
= \sum_{i=1}^N 1_{\{k_{i+1}=\cdots=k_N=0\}} E[ f_N^{(n)} (t_i,W_{t_i})] \Delta t \\
& = (n!)^{1/2} (\Delta t)^{(2-n)/2}
\sum_{i=1}^N 1_{\{k_{i+1}=\cdots=k_N=0\}} i^{-n/2} E[ f_N (t_i,W_{t_i})H_n (W_{t_i}/\sqrt{t_i})] .
\end{split}
\end{equation*}
If further $ k_l \geq 1 $ and $ k_{l+1} =\cdots k_N =0 $ for some $ l $,
then
\begin{equation*}
\begin{split}
&E [D_{t_1}^{k_1} \cdots D^{k_l}_{t_l}F^N] \\
&= (n!)^{1/2} (\Delta t)^{(2-n)/2} \sum_{i=l}^N 
i^{-n/2} E[ f_N (t_i,W_{t_i})H_n (W_{t_i}/\sqrt{t_i})] \\
&=(n!)^{1/2} (\Delta t)^{(2-n)/2} A_l^{1/2} . 
\end{split}
\end{equation*}
Therefore, 
\begin{equation}\label{supDD}
\sup_{k_1+\cdots+k_N=n}
(E [D_{t_1}^{k_1} \cdots D^{k_N}_{t_N}F^N])^2
= n! (\Delta t)^{(2-n)} \sup_{l=1, \cdots, N} A_l
\end{equation}

On the other hand, 
we have
\begin{equation}\label{JN}
\begin{split}
& \Vert J_n F^N \Vert^2 \\
&= \sum_{l=1}^N \sum_{\substack{k_1+\cdots+k_l=n \\ k_l \geq 1}}
(E[F^N H_{k_1} ( \Delta W_{1} / \sqrt{\Delta t} ) 
\cdots H_{k_l}( \Delta W_{l} / \sqrt{\Delta t} )])^2 \\
&=  \sum_{l=1}^N \sum_{\substack{k_1+\cdots+k_l=n \\ k_l \geq 1}}
\frac{(\Delta t)^n}{k_1! \cdots k_l!} 
(E [D_{t_1}^{k_1} \cdots D^{k_l}_{t_l}F^N])^2 \\ 
&=  \sum_{l=1}^N A_l \sum_{\substack{k_1+\cdots+k_l=n \\ k_l \geq 1}}
\frac{(\Delta t)^2 n!}{k_1! \cdots k_l!} 
= (\Delta t )^2 \sum_{l=1}^N A_l \{ l^n -(l-1)^n\}. 
\end{split}
\end{equation}
Putting (\ref{supDD}) and (\ref{JN}) together, 
we have
\begin{equation*}
\begin{split}
\sup_{k_1+\cdots+k_N=n}\frac{
(E [D_{t_1}^{k_1} \cdots D^{k_N}_{t_N}F^N])^2}
{\Vert J_n F^N \Vert^2}
&= \frac{n!}{T^n}\frac{N^n \sup A_l }{\sum_{l=1}^N A_l\{l^n - (l-1)^n\}} \\
&=\frac{n!}{T^n}\alpha_{N,n}(F^N).
\end{split}
\end{equation*}
Note that $ \Vert J_n F^N \Vert^2 = 0 $ implies both $ \alpha_{N,n}(F^N) = 0 $ 
and 
\begin{equation*}
\sup_{k_1+\cdots+k_N=n} E [D_{t_1}^{k_1} \cdots D^{k_N}_{t_N}F^N])^2 = 0.
\end{equation*}
\end{proof}

\begin{coro}
If
\begin{equation*}
\sup_N \frac{\sup_l A_l}{\inf_l A_l} < \infty,
\end{equation*}
then $ \{F^N\} $ is finite-dimensional. 
\end{coro}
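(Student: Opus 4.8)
The plan is to derive the corollary directly from the preceding Proposition, which characterizes the ``finite-dimensional property'' \eqref{fdp} as the boundedness of $\sup_n \sup_N \alpha_{N,n}(F^N)$. So it suffices to show that the hypothesis $\sup_N \frac{\sup_l A_l}{\inf_l A_l} < \infty$ forces $\alpha_{N,n}(F^N)$ to stay bounded, uniformly in both $n$ and $N$. Recall from the proof of the Proposition that, when the relevant quantities are nonzero,
\[
\alpha_{N,n}(F^N) = \frac{N^n \sup_l A_l}{\sum_{l=1}^N A_l\{l^n - (l-1)^n\}},
\]
so the task reduces to bounding the denominator $\sum_{l=1}^N A_l\{l^n - (l-1)^n\}$ from below in terms of $N^n$ and $\sup_l A_l$.

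The key estimate is elementary: since each $A_l \geq \inf_l A_l =: a$ and the increments $l^n - (l-1)^n$ are nonnegative, I would bound
\[
\sum_{l=1}^N A_l\{l^n - (l-1)^n\} \geq a \sum_{l=1}^N \{l^n - (l-1)^n\} = a\, N^n
\]
by telescoping. On the other hand $\sup_l A_l \leq b := \sup_l A_l$, so
\[
\alpha_{N,n}(F^N) \leq \frac{N^n\, b}{a\, N^n} = \frac{b}{a} = \frac{\sup_l A_l}{\inf_l A_l}.
\]
Taking the supremum over $n$ and $N$ gives $\sup_n\sup_N \alpha_{N,n}(F^N) \leq \sup_N \frac{\sup_l A_l}{\inf_l A_l} < \infty$, which is exactly the criterion in the Proposition.

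The one point requiring a word of care is the degenerate case: if $\inf_l A_l = 0$ for some $N$ the ratio $\sup_l A_l / \inf_l A_l$ is interpreted as $+\infty$ and the hypothesis is vacuous; and if instead all $A_l = 0$ then by the Proposition's own remark both $\alpha_{N,n}(F^N) = 0$ and the left-hand side of \eqref{fdp} vanishes, so there is nothing to check for that $N$. In all other cases $0 < \inf_l A_l \leq \sup_l A_l < \infty$ and the telescoping bound above applies verbatim. I do not anticipate any real obstacle here — the content is entirely in the Proposition, and the corollary is just the observation that a uniformly bounded oscillation of the $A_l$'s is the simplest sufficient condition making the weighted average $\sum_l A_l\{l^n-(l-1)^n\}/N^n$ comparable to $\sup_l A_l$.
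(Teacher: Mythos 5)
Your proof is correct and follows essentially the same route as the paper: bound the denominator below by $\inf_l A_l \sum_{l=1}^N \{l^n-(l-1)^n\} = N^n \inf_l A_l$ via telescoping, so that $\alpha_{N,n}(F^N) \leq \sup_l A_l / \inf_l A_l$, and then invoke the Proposition. Your extra remark on the degenerate case $\inf_l A_l = 0$ is a harmless addition the paper leaves implicit.
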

\begin{proof}
Since
\begin{equation*}
\sum_{l=1}^N A_l\{l^n - (l-1)^n\} \geq 
\inf_l A_l \sum_{l=1}^N \{l^n - (l-1)^n\} = N^n \inf_l A_l,
\end{equation*}
we see that 
\begin{equation*}
\alpha_{n,N}(F^N) \leq \frac{\sup_l A_l}{\inf_l A_l}.
\end{equation*}
\end{proof}

%%%%%%%%%%%%%%%%%%%%%%%%%%%%%%%%%%%%%%%%%%%%%%%%%%%%%%%%%%%%
\subsection{Asymptotic Analysis of the Martingale Representation Error 
of a Discretization 
of Brownian Occupation Time}
%%%%%%%%%%%%%%%%%%%%%%%%%%%%%%%%%%%%%%%%%%%%%%%%%%%%%%%%%%%%
The sequence of Riemann sum approximations
\begin{equation}\label{ocp}
F^{N} := \sum_{i=1}^{N} 1_{[0,\infty )} (W_{t_{i}}) \Delta t, 
\quad N \in \mathbb{N} 
\end{equation}
of the Brownian occupation time $ \int_0^T 1_{[0,\infty)} (W_s)\,ds $ is 
an interesting example where an explicit calculation is possible. 
We first prove that the sequence is not finite-dimensional 
in the sense of (\ref{fdp}). 
However, it is rather difficult to check if the
condition for Corollary \ref{1stEE} is satisfied. 
Instead, 
by a direct calculation the martingale representation error 
of the sequence is proven to be of order $ 1/2 $. 

\begin{prop}
The index $ \alpha_{n,N} (F^N) $ of the sequence (\ref{ocp})
is not bounded. 
\end{prop}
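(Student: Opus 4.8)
The plan is to show that the defining index $\alpha_{n,N}(F^N)$ for the occupation-time sequence (\ref{ocp}) grows without bound by exhibiting, for suitable $n$ and $N$, a lower bound that diverges. Following the criterion in the preceding Proposition, $\alpha_{n,N}(F^N) = N^n \sup_l A_l / \sum_{l=1}^N A_l\{l^n - (l-1)^n\}$, so I must control the quantities
\begin{equation*}
A_l = \Big(\sum_{i=l}^N i^{-n/2}\, E\big[1_{[0,\infty)}(W_{t_i}) H_n(W_{t_i}/\sqrt{t_i})\big]\Big)^2.
\end{equation*}
The first step is to compute the single-term coefficient $c_{i,n} := E[1_{[0,\infty)}(W_{t_i}) H_n(W_{t_i}/\sqrt{t_i})]$. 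By scaling, $W_{t_i}/\sqrt{t_i}$ is standard normal, so $c_{i,n} = E[1_{[0,\infty)}(Z) H_n(Z)]$ with $Z\sim N(0,1)$, which is \emph{independent of $i$} (and of $N$); call it $c_n$. Using $H_n'= \sqrt{n}\,H_{n-1}$ together with the Gaussian-weight identity, one finds $c_n = 0$ for even $n\geq 2$ and, for odd $n$, $c_n = \frac{1}{\sqrt{n}}\,\frac{1}{\sqrt{2\pi}} H_{n-1}(0)$ (up to the normalization of $H_n$ used in (\ref{Her})); the key point is only that $c_n\neq 0$ for odd $n$. Hence for odd $n$, $A_l = c_n^2 \big(\sum_{i=l}^N i^{-n/2}\big)^2$, a strictly decreasing positive sequence in $l$, so $\sup_l A_l = A_1 = c_n^2\big(\sum_{i=1}^N i^{-n/2}\big)^2$.

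The second step is to estimate the denominator $D_{n,N} := \sum_{l=1}^N A_l\{l^n - (l-1)^n\}$ from above and compare with $N^n A_1$. Since $A_l$ is decreasing, $A_l \le A_1$ and therefore $D_{n,N} \le A_1 \sum_{l=1}^N\{l^n-(l-1)^n\} = A_1 N^n$, which only gives $\alpha_{n,N}\ge 1$ — not enough. To get divergence I would instead bound $A_l$ from above more sharply for the bulk of the indices: for $n\geq 3$, $\sum_{i=l}^N i^{-n/2} \le \sum_{i=l}^\infty i^{-n/2} \le C_n\, l^{-(n/2-1)}$, so $A_l \le c_n^2 C_n^2\, l^{-(n-2)}$. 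Then
\begin{equation*}
D_{n,N} \le c_n^2 C_n^2 \sum_{l=1}^N l^{-(n-2)}\{l^n-(l-1)^n\} \le c_n^2 C_n^2\, n \sum_{l=1}^N l^{-(n-2)} l^{n-1} = c_n^2 C_n^2\, n \sum_{l=1}^N l = c_n^2 C_n^2\, n\, \tfrac{N(N+1)}{2},
\end{equation*}
using $l^n-(l-1)^n \le n\, l^{n-1}$. Meanwhile $\sup_l A_l = A_1 \ge c_n^2$ (the $i=1$ term alone), in fact $A_1 = c_n^2(\sum_{i=1}^N i^{-n/2})^2 \ge c_n^2$. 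Therefore
\begin{equation*}
\alpha_{n,N}(F^N) = \frac{N^n A_1}{D_{n,N}} \ge \frac{N^n \cdot c_n^2}{c_n^2 C_n^2\, n\, N(N+1)/2} = \frac{2 N^{n-1}}{C_n^2\, n\, (N+1)},
\end{equation*}
which, for fixed odd $n\ge 3$, tends to $\infty$ as $N\to\infty$. Hence $\sup_n\sup_N \alpha_{n,N}(F^N) = \infty$, so by the criterion the sequence is not finite-dimensional.

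The main obstacle I anticipate is getting the constant $C_n$ in the tail bound $\sum_{i\ge l} i^{-n/2} \le C_n l^{-(n/2-1)}$ to behave well enough — it should be comparable to $\frac{1}{n/2-1}$ by integral comparison, which is harmless for fixed $n$ — and, relatedly, making sure the comparison is uniform in the regime of $l$ that dominates $D_{n,N}$. An alternative, perhaps cleaner, route is to fix a single convenient odd value, say $n=3$, compute $c_3$ explicitly, and then just show $\alpha_{3,N}(F^N) = \Theta(N^2/N) = \Theta(N)\to\infty$ directly from $A_l = c_3^2(\sum_{i=l}^N i^{-3/2})^2 \asymp c_3^2 l^{-1}$ for $l$ not too close to $N$; this avoids carrying $n$ around. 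I would present the $n=3$ version in the body and remark that the same computation works for every odd $n\ge 3$. A secondary subtlety is the degenerate branch in the definition of $\alpha_{n,N}$: one must note $A_l > 0$ for all $l$ when $n$ is odd (since $c_n\neq 0$ and the partial sums $\sum_{i=l}^N i^{-n/2}$ are strictly positive), so the ``otherwise'' branch applies and no division-by-zero issue arises.
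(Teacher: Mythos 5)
Your proof is correct and follows essentially the same strategy as the paper: you compute the Hermite coefficient of the indicator so that $A_l$ becomes a nonzero constant times $\big(\sum_{i=l}^N i^{-n/2}\big)^2$, then show the numerator $N^n\sup_l A_l$ is of order $N^n$ while the denominator is $O(N^2)$, giving $\alpha_{n,N}(F^N)\gtrsim N^{n-2}\to\infty$ for a fixed suitable $n\geq 3$. The only differences are cosmetic: your discrete tail-sum bound $A_l\lesssim l^{-(n-2)}$ together with $l^n-(l-1)^n\le n\,l^{n-1}$ replaces the paper's integral-comparison estimates (its $J_1^N, J_2^N, J_3^N$ decomposition), and you make explicit the parity restriction ($H_{n-1}(0)\neq 0$ only for odd $n$, so the nondegenerate branch of $\alpha_{n,N}$ applies), a point the paper leaves implicit when it cancels $H_{n-1}(0)^2$ in the ratio.
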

\begin{proof}
First, we observe that   
\begin{equation*}
\begin{split}
A_l & =
\left( \sum_{i=l}^N i^{-n/2} \mathbf{E} [1_{[0,\infty )} (W_{t_{i}}) 
H_n(W_{t_i}/\sqrt{t_i}) ] \right)^2\\
&= \left( \sum_{i=l}^N i^{-n/2}t^{1/2}_i n^{-1/2} 
\mathbf{E} [\delta_0 (W_{t_{i}}) 
H_{n-1} (W_{t_i}/\sqrt{t_i}) ] \right)^2 \\
&=
(2\pi n)^{-1} \big( H_{n-1} (0) \big)^2 
\left( \sum_{i=l}^N i^{-n/2} \right)^2. 
\end{split}
\end{equation*}
Then, we now see that 
\begin{equation}\label{index}
\alpha_{n,N} (F^N) 
=
\frac{N^n \left( \sum_{i=1}^N i^{-n/2} \right)^2}{
\sum_{l=1}^N \left( \sum_{i=l}^N i^{-n/2} \right)^2
\{ l^n - (l-1)^n\}} .
\end{equation} 
First, we estimate the numerator of (\ref{index}).
We let $ n \geq 5 $. Then
\begin{equation}\label{num}
\begin{split}
& N^n \left( \sum_{i=1}^N i^{-n/2} \right)^2
= N^2 \left( \sum_{i=1}^N 
\left(\frac{i}{N}\right)^{-n/2} \frac{1}{N} \right)^2 \\
& \geq N^2 \left(\int_{1/N}^1 x^{-n/2} dx \right)^2 
= N^2 \left\{ \frac{2}{n-2} (N^{(n-2)/2}-1) \right\}^2. 
\end{split}
\end{equation}
Next, the denominator is estimated as follows:
\begin{equation*}
\begin{split}
&\sum_{l=1}^N \left( \sum_{i=l}^N i^{-n/2} \right)^2
\{ l^n - (l-1)^n\} \\
&= N^2 \sum_{l=1}^N \left( \sum_{i=l}^N \left(
\frac{i}{N}\right)^{-n/2} \frac{1}{N} \right)^2 
\left\{ \left(\frac{l}{N}\right)^n - \left(\frac{l-1}{N}\right)^n
\right\} \\
&\leq  N^2 \sum_{l=1}^N \left( \int_{l/N}^1 x^{-n/2} dx + 
\left(\frac{l}{N}\right)^{-n/2} \frac{1}{N} \right)^2 \\
& \hspace{3cm} \times \left\{ \left(\frac{l}{N}\right)^n - \left(\frac{l-1}{N}\right)^n
\right\} \\
&\leq N^2 \sum_{l=1}^N \left( \frac{2}{n-2} 
\left\{ \left(\frac{l}{N}\right)^{(2-n)/2} - 1 \right\} + 
\left(\frac{l}{N}\right)^{-n/2} \frac{1}{N} \right)^2 \\
& \hspace{3cm} 
\times \left\{ \left(\frac{l}{N}\right)^n - \left(\frac{l-1}{N}\right)^n
\right\} \\
&=
N^{2} \big\{ J_{1}^{N} + J_{2}^{N} + J_{3}^{N}  \big\}
\end{split}
\end{equation*}
where
\begin{equation*}
\begin{split}
J_{1}^{N}
&:=
(n-2)^{-2} \sum_{l=1}^{N}
\Big\{ \Big( \frac{l}{N} \Big)^{(2-n)/2} - 1 \Big\}^{2}
\Big\{ \Big( \frac{l}{N} \Big)^{n} - \Big( \frac{l-1}{N} \Big)^{n} \Big\}, \\
%%%%%%%%%%
J_{2}^{N}
&:=
\frac{2 (n-2)^{-1}}{N} \sum_{l=1}^{N}
\Big\{ \Big( \frac{l}{N} \Big)^{1-n} - \Big( \frac{l}{N} \Big)^{-n/2} \Big\}
\Big\{ \Big( \frac{l}{N} \Big)^{n} - \Big( \frac{l-1}{N} \Big)^{n} \Big\}
\end{split}
\end{equation*}
and
\begin{equation*}
J_{3}^{N}
:=
\frac{1}{N^{2}}
\sum_{l=1}^{N} \Big( \frac{l}{N} \Big)^{-n}
\Big\{ \Big( \frac{l}{N} \Big)^{n} - \Big( \frac{l-1}{N} \Big)^{n} \Big\}.
\end{equation*}
It is easy to see that
$\sup_{N} J_{2}^{N} < \infty$
and
$\lim_{N\to\infty} J_{3}^{N}=0$.
Since $J_{1}^{N}$ behaves like
$$
(n-2)^{-2}
\int_{0}^{1}
\big\{ x^{(2-n)/2} - 1 \big\}^{2} n x^{n-1} dx < \infty
$$
as $N\to\infty$, it is also seen that $\sup_{N} J_{1}^{N} < \infty$.
Therefore, there is a constant $ C_n $ independent of $ N $ but possibly 
dependent on $ n $ such that
\begin{equation}\label{deno}
\sum_{l=1}^N \left( \sum_{i=l}^N i^{-n/2} \right)^2
\{ l^n - (l-1)^n\}
\leq
N^{2} C_{n}.
\end{equation}

From (\ref{num}) and (\ref{deno}), we see that 
$ \sup_N \alpha_{n,N} = \infty $. 
\end{proof}

Our main result in this subsection is the following. 
\begin{thm}\label{Occ}%%%%%%%%%%%%%%%%%%%%%%%%%%%%%%%%%%%%%%
\if Let
$\displaystyle
F^{N} := \sum_{i=1}^{N} 1_{[0,\infty )} (W_{t_{i-1}}) \Delta t
$,
then as $N\to\infty$ \fi
It holds that 
\begin{equation*}
\begin{split}
\Vert \text{{\rm \mbox{1-Mart.Err}}}(F^{N}) \Vert_{L^{2}}
=
O(N^{-1/2}). 
\end{split}
\end{equation*}
\end{thm}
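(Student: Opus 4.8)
The plan is to bypass the chaos expansion of Theorem~\ref{ACO2} and instead evaluate the error norm directly, exploiting that $1_{[0,\infty)}$ is the kind of functional for which conditional expectations of $\delta_{0}(W_{\cdot})$ are explicit Gaussians. First I would record the ``Pythagorean'' identity
\[
\big\Vert \mbox{{\rm 1-Mart.Err}}(F^{N}) \big\Vert_{L^{2}}^{2}
= \mathrm{Var}(F^{N}) - \Delta t \sum_{l=1}^{N} \big\Vert \mathbf{E}[\partial_{l} F^{N} \vert \mathcal{G}_{l-1}^{N}] \big\Vert_{L^{2}}^{2}.
\]
This is pure orthogonality: the integration-by-parts formula (\ref{DIBP}) with $k=1$ gives $\mathbf{E}[F^{N}\Delta W^{N}_{l}\vert\mathcal{G}_{l-1}^{N}] = \Delta t\,\mathbf{E}[\partial_{l}F^{N}\vert\mathcal{G}_{l-1}^{N}]$, so $F^{N}-\mathbf{E}[F^{N}] - \sum_{l}\mathbf{E}[\partial_{l}F^{N}\vert\mathcal{G}_{l-1}^{N}]\Delta W^{N}_{l}$ is $L^{2}$-orthogonal to the constant and to each summand of the discrete stochastic integral (one may also read this off from (\ref{PCO2})). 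It then suffices to prove the right-hand side is $O(N^{-1})$.

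Next I would compute the two terms explicitly. Since $\partial_{l}F^{N} = \Delta t\sum_{i=l}^{N}\delta_{0}(W_{t_{i}})$ and $W_{t_{i}}-W_{t_{l-1}}$ is independent of $\mathcal{G}_{l-1}^{N}$ with variance $(i-l+1)\Delta t$, one has $\mathbf{E}[\delta_{0}(W_{t_{i}})\vert\mathcal{G}_{l-1}^{N}] = p_{(i-l+1)\Delta t}(W_{t_{l-1}})$, where $p_{v}(x) = (2\pi v)^{-1/2}e^{-x^{2}/2v}$. Combining this with the elementary Gaussian identity $\mathbf{E}[p_{a}(W)p_{b}(W)] = \{2\pi[(a+b)\mathrm{Var}(W)+ab]\}^{-1/2}$ for a centred Gaussian $W$, and the simplification $(a+b)(l-1)\Delta t + ab = (\Delta t)^{2}(ij-(l-1)^{2})$ with $a=(i-l+1)\Delta t$, $b=(j-l+1)\Delta t$, I obtain
\[
\Delta t \sum_{l=1}^{N} \big\Vert \mathbf{E}[\partial_{l} F^{N} \vert \mathcal{G}_{l-1}^{N}] \big\Vert_{L^{2}}^{2}
= \frac{(\Delta t)^{2}}{2\pi} \sum_{l=1}^{N} \sum_{i,j=l}^{N} \frac{1}{\sqrt{ij-(l-1)^{2}}}
= \frac{(\Delta t)^{2}}{2\pi} \sum_{i,j=1}^{N} \sum_{m=0}^{(i\wedge j)-1} \frac{1}{\sqrt{ij-m^{2}}}.
\]
On the other hand, by the classical bivariate-Gaussian orthant formula (Sheppard) applied to $(W_{t_{i}},W_{t_{j}})$, whose correlation is $\sqrt{(i\wedge j)/(i\vee j)}$, one has $\mathrm{Var}(F^{N}) = (\Delta t)^{2}\sum_{i,j}\mathrm{Cov}(1_{\{W_{t_{i}}\geq 0\}},1_{\{W_{t_{j}}\geq 0\}}) = \frac{(\Delta t)^{2}}{2\pi}\sum_{i,j=1}^{N}\arcsin\sqrt{(i\wedge j)/(i\vee j)}$. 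Recognising $\arcsin\sqrt{(i\wedge j)/(i\vee j)} = \int_{0}^{i\wedge j}(ij-m^{2})^{-1/2}\,dm$, the identity of the first paragraph becomes
\[
\big\Vert \mbox{{\rm 1-Mart.Err}}(F^{N}) \big\Vert_{L^{2}}^{2}
= \frac{(\Delta t)^{2}}{2\pi} \sum_{i,j=1}^{N} E_{ij}, \qquad
E_{ij} := \int_{0}^{i\wedge j}\frac{dm}{\sqrt{ij-m^{2}}} - \sum_{m=0}^{(i\wedge j)-1}\frac{1}{\sqrt{ij-m^{2}}} \geq 0,
\]
a double sum of left-endpoint Riemann-sum errors of the increasing function $m\mapsto (ij-m^{2})^{-1/2}$ on $[0,i\wedge j]$.

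It remains to show $\sum_{i,j=1}^{N}E_{ij} = O(N)$, which together with $(\Delta t)^{2}=T^{2}/N^{2}$ gives $\Vert\mbox{{\rm 1-Mart.Err}}(F^{N})\Vert_{L^{2}}^{2}=O(N^{-1})$, hence the claimed order. By symmetry $E_{ij}=E_{ji}$, so I only need to bound the diagonal and the range $i<j$. For $i<j$ the function $g(m):=(ij-m^{2})^{-1/2}$ is bounded on $[0,i]$ and telescoping its monotone increments gives $E_{ij}\leq g(i)-g(0) = (i(j-i))^{-1/2}-(ij)^{-1/2} \leq (i(j-i))^{-1/2}$, whence
\[
\sum_{1\leq i<j\leq N}\frac{1}{\sqrt{i(j-i)}}
= \sum_{i=1}^{N-1}\frac{1}{\sqrt{i}}\sum_{r=1}^{N-i}\frac{1}{\sqrt{r}}
\leq \sum_{i=1}^{N-1}\frac{2\sqrt{N-i}}{\sqrt{i}}
\leq 2\sqrt{N}\sum_{i=1}^{N}\frac{1}{\sqrt{i}} = O(N).
\]

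The one delicate point is the diagonal $i=j$, where $g$ has a square-root singularity at the right endpoint $m=i$, so the telescoping bound blows up. There I would isolate the last subinterval, writing $E_{ii}\leq \big(g(i-1)-g(0)\big)+\int_{i-1}^{i}g(m)\,dm$; the first bracket equals $(2i-1)^{-1/2}-i^{-1}\leq i^{-1/2}$, and the second equals $\tfrac{\pi}{2}-\arcsin(1-1/i)=\arccos(1-1/i)$, which is $O(i^{-1/2})$ by Jordan's inequality (giving $\arccos(1-\varepsilon)\leq \tfrac{\pi}{\sqrt{2}}\sqrt{\varepsilon}$). Hence $\sum_{i}E_{ii}=O\big(\sum_{i}i^{-1/2}\big)=O(\sqrt{N})$, and altogether $\sum_{i,j}E_{ij}=O(N)$, completing the proof. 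The main obstacle is exactly this endpoint singularity of $(ij-m^{2})^{-1/2}$ on the diagonal, which rules out a uniform Riemann-sum estimate and forces (though it also rewards, with an $O(\sqrt{N})$ contribution rather than $O(N)$) a separate treatment of the diagonal terms.
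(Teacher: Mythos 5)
Your proof is correct, but it follows a genuinely different route from the paper's. The paper starts from Theorem \ref{ACO2} and keeps the chaos order $k$ as a parameter: it computes the coefficients $\mathbf{E}\big[\sum_i 1_{[0,\infty)}^{(k)}(W_{t_i})\Delta t\big]^2$ explicitly through $H_{k-1}(0)$, rewrites the squared error as $\sum_{k\geq 2} Z_{N,k}\,\mathbf{E}\big[1_{[0,\infty)}(W_T)H_k(W_T/\sqrt{T})\big]^2$, and then proves the uniform deterministic bound $Z_{N,k}\leq 9T^2/N$ (Lemma \ref{ZNk}) by a chaos-by-chaos comparison of discrete sums with integrals. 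You instead collapse the chaos decomposition at the outset via the Pythagorean identity $\Vert\mathrm{Err}\Vert_{L^2}^2=\mathrm{Var}(F^N)-\Delta t\sum_l\Vert\mathbf{E}[\partial_l F^N\,\vert\,\mathcal{G}_{l-1}^N]\Vert_{L^2}^2$, evaluate both terms in closed form (Gaussian product identity for the projection energy, Sheppard's arcsine orthant formula for the variance), and reduce the whole problem to a single Riemann-sum error estimate for $m\mapsto(ij-m^2)^{-1/2}$, handling the diagonal endpoint singularity separately; your bounds there (telescoping for $i\neq j$, the $\arccos(1-1/i)=O(i^{-1/2})$ estimate on the diagonal) are all sound, and the final count $\sum_{i,j}E_{ij}=O(N)$ with $(\Delta t)^2=T^2/N^2$ gives the stated rate. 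What your approach buys is a shorter, fully explicit and self-contained computation specific to the occupation-time functional; what the paper's approach buys is modularity, since the bound $Z_{N,k}\leq CT^2/N$ is uniform in $k$ and separates the combinatorial/time-grid estimate from the functional, which only enters through its one-dimensional chaos coefficients. One cosmetic slip: your displayed Gaussian identity should read $\mathbf{E}[p_a(W)p_b(W)]=\big\{2\pi\sqrt{(a+b)\mathrm{Var}(W)+ab}\big\}^{-1}$ (the $2\pi$ sits outside the square root); the constant you actually use in the subsequent double-sum formula, $\frac{(\Delta t)^2}{2\pi}$, is the correct one, so nothing downstream is affected.
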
%%%%%%%%%%%%%%%%%%%%%%%%%%%%%%%%%%%%%%%%%%%%%%%%%%%

\begin{proof}%%%%%%%%%%%%%%%%%%%%%%%%%%%%%%%%%%%%%%%%%%%%%%%
By Theorem \ref{ACO2}, we have
\begin{equation}\label{Basic1}
\begin{split}
&
\Vert \mathrm{Err}_{N} \Vert_{L^{2}}^{2} \\
&=
\sum_{l=1}^{N} \sum_{k=2}^{\infty}
\mathbf{E} \Big[
\mathbf{E} \big[
\sum_{i=1}^{N} 1_{[0,\infty )} (W_{t_{i}}) \Delta t
H_{k} \left( \frac{\Delta W^N_{l}}{\sqrt{\Delta t}} \right)
\big\vert \mathcal{G}_{l-1}^{N}
\big]^{2}
\Big] \\
&=
\sum_{l=1}^{N} \sum_{k=2}^{\infty}
\frac{ (\Delta t)^{k} }{ k! }
\mathbf{E} \Big[
\mathbf{E} \big[
\sum_{i=l}^{N} 1_{[0,\infty )}^{(k)} (W_{t_{i}}) \Delta t
\big\vert \mathcal{G}_{l-1}^{N}
\big]^{2}
\Big]. \\
\end{split}
\end{equation}
For $ l \geq 2 $, 
by the Hermite expansion in $ L^2 (\mathbf{R}, \mu_{t_{l-1}}) $,  
\begin{equation*}
\begin{split}
&
\mathbf{E} \big[
\sum_{i=l}^{N} 1_{[0,\infty )}^{(k)} (W_{t_{i}}) \Delta t
\big\vert \mathcal{G}_{l-1}^{N}
\big] \\
&=
\sum_{n=0}^{\infty}
\frac{ ( t_{l-1} )^{n/2} }{ \sqrt{n!} }
\mathbf{E} \big[
\sum_{i=l}^{N} 1_{[0,\infty )}^{(n+k)} (W_{t_{i}}) \Delta t
\big]
H_{n} \left( \frac{ W_{t_{l-1}} }{ \sqrt{t_{l-1}} } \right),
\end{split}
\end{equation*}
and by Parseval's identity we have
\begin{equation}\label{Parseval}
\begin{split}
&
E\left[ \mathbf{E} \big[
\sum_{i=l}^{N} 1_{[0,\infty )}^{(k)} (W_{t_{i}}) \Delta t
\big\vert \mathcal{G}_{l-1}^{N}
\big]^2 \right] \\
&=
\sum_{n=0}^{\infty}
\frac{ ( t_{l-1} )^{n} }{ n! }
\mathbf{E} \big[
\sum_{i=l}^{N} 1_{[0,\infty )}^{(n+k)} (W_{t_{i}}) \Delta t
\big]^2.
\end{split}
\end{equation}
Note that (\ref{Parseval}) is also valid for $ l=1 $ 
with the conventions $ t_0=0 $ and $ t_0^0 =1 $. 
Plugging (\ref{Parseval}) into (\ref{Basic1}), we have
\begin{equation*}
\begin{split}
&
\Vert \mathrm{Err}_{N} \Vert_{L^{2}}^{2} \\
&=
\sum_{l=1}^{N} \sum_{k=2}^{\infty}\sum_{n=0}^{\infty}
\frac{ (\Delta t)^{k} }{ k! }
\frac{ ( t_{l-1} )^{n} }{ n! }
\mathbf{E} \big[
\sum_{i=l}^{N} 1_{[0,\infty )}^{(n+k)} (W_{t_{i}}) \Delta t
\big]^2.
\end{split}
\end{equation*}
By the renumbering $ (n+k,n) \mapsto (k,n) $, 
we have 
\begin{equation*}
\begin{split}
&
\Vert \mathrm{Err}_{N} \Vert_{L^{2}}^{2} \\
&=
\sum_{l=1}^{N} \sum_{k=2}^{\infty}\frac{1}{k!}
\mathbf{E} \big[
\sum_{i=l}^{N} 1_{[0,\infty )}^{(k)} (W_{t_{i}}) \Delta t
\big]^2
\sum_{n=0}^{k-2}
\frac{ k!  }{ (k-n)!n!}(\Delta t)^{k}( t_{l-1} )^{n},
\end{split}
\end{equation*}
by keeping the conventions on $ t_0 $.  
With a use of the binomial theorem,  
\begin{equation*}
\begin{split}
&
\Vert \mathrm{Err}_{N} \Vert_{L^{2}}^{2} \\
&=
\sum_{l=1}^{N} \sum_{k=2}^{\infty}
\frac{1}{k!}
\mathbf{E} \big[
\sum_{i=l}^{N} 1_{[0,\infty )}^{(k)} (W_{t_{i}}) \Delta t
\big]^{2} \\
& \hspace{2cm} \times
\Big\{
( t_{l} )^{k} - ( t_{l-1} )^{k} - k ( \Delta t )( t_{l-1} )^{k-1}
\Big\} .
\end{split}
\end{equation*}

Then, on one hand, for $ l \geq 1 $ and $ k \geq 2 $, 
\begin{equation*}
\begin{split}
&
\mathbf{E} \big[
\sum_{i=l}^{N} 1_{[0,\infty )}^{(k)} (W_{t_{i-1}}) \Delta t
\big]^{2} \\
&=
\Big\{
\sum_{i=l}^{N}
\frac{ \sqrt{ (k-1)! } }{ (t_{i})^{ \frac{k-1}{2} } }
\mathbf{E} \big[
\delta_{0} (W_{t_{i}})
H_{k-1} \left( \frac{ W_{t_{i}} }{ \sqrt{t_{i}} } \right)
\big]
\Delta t
\Big\}^{2} \\
&=
\Big\{
\sum_{i=l}^{N}
\frac{ \sqrt{ (k-1)! } }{ (t_{i})^{ \frac{k-1}{2} } }
H_{k-1}(0) \frac{1}{ \sqrt{2\pi t_{i}} }
\Delta t
\Big\}^{2} \\
&=
k! \cdot \frac{ H_{k-1}(0)^{2} }{ 2\pi k }
\Big\{
\sum_{i=l}^{N}
\frac{ \Delta t }{ (t_{i})^{ n/2 } }
\Big\}^{2}.
\end{split}
\end{equation*}
By a similar argument, we find
$$
\mathbf{E} \big[
1_{[0,\infty )} (W_{T}) H_{k} \left( \frac{W_{T}}{\sqrt{T}} \right)
\big]
=
\frac{ H_{k-1}(0) }{ \sqrt{2\pi k} }
$$
and therefore
\begin{equation*}
\begin{split}
\Vert \mathrm{Err}_{N} \Vert_{L^{2}}^{2}
=
\sum_{k=2}^{\infty}
Z_{N,k}
\mathbf{E} \big[
1_{[0,\infty )} (W_{T}) H_{k} \left( \frac{W_{T}}{\sqrt{T}} \right)
\big]^{2}
\end{split}
\end{equation*}
where
\begin{equation}\label{ZNk-def}
\begin{split}
&
Z_{N,k}:=\sum_{l=1}^{N}
\Big\{
\sum_{i=l}^{N}
\frac{ \Delta t }{ (t_{i})^{ k/2 } }
\Big\}^{2}
\Big\{
( t_{l} )^{k} - ( t_{l-1} )^{k} - k ( \Delta t )( t_{l-1} )^{k-1}
\Big\} .
\end{split}
\end{equation}

On the other hand, by Lemma \ref{ZNk} below, we know that 
there exists a constant $ K > 0 $ such that
\begin{equation*}
Z_{N,k} \leq \frac{K}{N}
\end{equation*}
for each $k=2,3,\cdots$ and $N=3,4,\cdots$. 
Hence we have
\begin{equation*}
\Vert \mathrm{Err}_{N} \Vert_{L^{2}}^{2}
\leq
\frac{2K}{N}
\Vert 1_{[0,\infty )} (W_{T}) \Vert_{L^{2}}^{2}.
\end{equation*}

\end{proof}%%%%%%%%%%%%%%%%%%%%%%%%%%%%%%%%%%%%%%%%%%%%%%%%%

\begin{lemma}\label{ZNk}%%%%%%%%%%%%%%%%%%%%%%%%%%%%%%%%%%%%%%%%%%%%%%%
%There is a constant $K>0$ (which may depend on $T$) such that
For $ k \geq 2 $, it holds that 
\begin{equation}\label{est30}
Z_{N,k} \leq \frac{9 T^2}{N}. 
\end{equation}
where
$ Z_{N,k} $ is given as above in (\ref{ZNk-def}). 
\end{lemma}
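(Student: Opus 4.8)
The plan is first to scale out $T$ and then to evaluate the double sum by summation by parts. Writing $t_i = iT/N$ and $\Delta t = T/N$, one checks directly that
\[
Z_{N,k} = \Big(\frac{T}{N}\Big)^{2} \sum_{l=1}^{N} P_{l}^{2}\, Q_{l}, \qquad P_{l} := \sum_{i=l}^{N} i^{-k/2}, \quad Q_{l} := l^{k} - (l-1)^{k} - k(l-1)^{k-1},
\]
so it suffices to prove $\Sigma_{N,k} := \sum_{l=1}^{N} P_{l}^{2} Q_{l} \le 9N$. The key point is that $Q_{l}$ telescopes: with $b_{l} := l^{k} - k\sum_{j=0}^{l-1} j^{k-1}$ one has $b_{0}=0$ and $Q_{l} = b_{l}-b_{l-1}$, while $P_{l}-P_{l+1} = l^{-k/2}$. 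Abel summation therefore gives
\[
\Sigma_{N,k} = P_{N}^{2} b_{N} + \sum_{l=1}^{N-1} l^{-k/2}\big( 2P_{l+1} + l^{-k/2} \big) b_{l}.
\]

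I would then use only elementary inequalities. Convexity of $x\mapsto x^{k}$ together with comparison of sums with integrals gives $0 \le b_{l} \le l^{k-1}\min\{l,k\}$ and $P_{N}^{2} b_{N} = N^{-k}b_{N} \le 1$; the same comparison gives $P_{l+1} \le \int_{l}^{N} y^{-k/2}\,dy$, hence for $k\ge 3$ the crucial estimate $l^{k/2-1}P_{l+1} \le \tfrac{2}{k-2}\big(1-(l/N)^{k/2-1}\big)$, and for $k=2$ simply $P_{l+1}\le \log(N/l)$ (in fact for $k=2$ one has $b_{l}=l$, $Q_{l}\equiv 1$ and $\Sigma_{N,2} = 2N - H_{N}$ exactly, which is already $\le 9N$). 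Plugging these bounds into the Abel formula, one is left with sums of the shape $\sum_{l=1}^{N-1}\big(1-(l/N)^{k/2-1}\big)\min\{l,k\}$ and $\sum_{l=1}^{N-1} l^{-1}\min\{l,k\}$. The first is handled by Bernoulli's inequality, which yields $\sum_{l=1}^{N-1}(1-(l/N)^{k/2-1}) \le N\cdot\tfrac{k-2}{k}$; the factor $\tfrac{k-2}{k}$ then cancels the $\tfrac{1}{k-2}$ coming from the integral of $y^{-k/2}$ together with the $k$ hidden in $\min\{l,k\}$, which is exactly what produces a constant independent of $k$. The second sum is $\le N$ for every $k$. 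Splitting the analysis into the ranges $k=2$, $3\le k<N$, and $k\ge N$ (in the latter two using $\min\{l,k\}=l$, respectively $\le k$, as appropriate), each case yields a bound of the form $\Sigma_{N,k}\le cN+(\text{absolute constant})$ with $c\le 7$, hence $\le 9N$ once $N$ exceeds a small explicit threshold; the finitely many remaining pairs $(N,k)$ with $N$ small are disposed of by the trivial bounds $\Sigma_{1,k}=1$, $\Sigma_{2,k}\le 4$, and so on.

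The step I expect to be the main obstacle is the bookkeeping in the middle range $3\le k<N$: one must arrange the pairing of $\tfrac{1}{k-2}$, of $\min\{l,k\}$ and of the Bernoulli factor $\tfrac{k-2}{k}$ so that no $k$-dependence survives, and in doing so one must resist bounding $1-(l/N)^{k/2-1}$ by $1$, since that destroys precisely the cancellation that makes the constant uniform (the honest asymptotics being $\Sigma_{N,k}\sim 2N$, as the exact formula $\Sigma_{N,2}=2N-H_N$ already suggests). Everything else — the scaling reduction, the telescoping identity, the convexity/integral-comparison bounds on $b_l$ and $P_l$, and the $k=2$ case — is routine.
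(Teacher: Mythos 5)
Your route is correct, but it is genuinely different from the paper's. The paper telescopes the difference of squares in $l$ to reach the bound (\ref{EST-Z}), and then compares each of the two remaining Riemann-type sums with the \emph{common} limiting integral $2\int_0^T\!\int_t^T (t/s)^{k/2}\,ds\,dt = k\int_0^T\{\int_t^T t^{(k-1)/2}s^{-k/2}\,ds\}^2\,dt$, so that only discretization discrepancies remain; these are bounded separately as $Z^1_{N,k}\le 4T^2/N$ and $Z^2_{N,k}\le 5T^2/N$ (the latter via the further split into $I$ and $II$, with $k=2$ and $k\ge3$ treated apart), giving the constant $9=4+5$. You instead scale out $T$, reduce to the purely discrete statement $\Sigma_{N,k}=\sum_l P_l^2Q_l\le 9N$, and use Abel summation with $Q_l=b_l-b_{l-1}$; the uniformity in $k$ then comes from the explicit cancellation $\frac{4k}{k-2}\cdot\frac{k-2}{k}=4$ between the factor $k$ in $b_l\le k\,l^{k-1}$, the $\frac{2}{k-2}$ from $\int_l^N y^{-k/2}dy$, and the $\frac{k-2}{k}$ from $\sum_{l=1}^{N-1}\bigl(1-(l/N)^{k/2-1}\bigr)\le N\frac{k-2}{k}$. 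Your identities and bounds check out ($b_0=0$, $0\le b_l\le l^{k-1}\min\{l,k\}$, $P_N^2b_N\le1$, the Abel formula, and the exact $\Sigma_{N,2}=2N-H_N$), and assembled as you indicate they give $\Sigma_{N,k}\le 1+(N-1)+4N=5N$ for every $k\ge3$ and every $N$, so the constant $9$ is comfortable. What each approach buys: the paper's argument makes transparent \emph{why} $Z_{N,k}$ is small (two Riemann sums approximating the same integral), while yours is more self-contained, avoids the $I$/$II$ case analysis, is exact at $k=2$, yields the sharper constant $5$, and needs no threshold in $N$ — so the small-$N$ clean-up in your last paragraph is unnecessary.

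Two points to repair in the write-up. First, the inequality $\sum_{l=1}^{N-1}\bigl(1-(l/N)^{k/2-1}\bigr)\le N\frac{k-2}{k}$ is true, but it does not follow from Bernoulli's inequality: Bernoulli gives $1-x^{k/2-1}\le(\tfrac k2-1)(1-x)$, hence only $\tfrac{(k-2)(N-1)}{4}$, whose $k$-growth destroys the uniformity you need (and it is unavailable at $k=3$, where the exponent is $\tfrac12<1$); the correct and elementary derivation is the comparison of the sum of the decreasing nonnegative function $x\mapsto 1-x^{k/2-1}$ with $N\int_0^1(1-x^{k/2-1})\,dx=N\frac{k-2}{k}$, which is within the toolbox you already invoke. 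Second, your parenthetical assigning $\min\{l,k\}=l$ to the range $3\le k<N$ and $\le k$ to $k\ge N$ is backwards: taking $\min\{l,k\}=l$ for small $k$ produces a bound of order $N^2/(k+2)$, which is not $O(N)$; in fact no split in $k$ is needed at all, since $\min\{l,k\}\le k$ already gives $2\sum_{l}l^{-k/2}P_{l+1}b_l\le 4N$ uniformly for all $k\ge3$.
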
%%%%%%%%%%%%%%%%%%%%%%%%%%%%%%%%%%%%%%%%%%%%%%%%%
\begin{proof}%%%%%%%%%%%%%%%%%%%%%%%%%%%%%%%%%%%%%%%%%%%%%%%
We may write
\begin{equation*}
\begin{split}
&
Z_{N,k}
=
\sum_{ l=1 }^{N}
\Big[
\Big\{
\sum_{i=l}^{N}
\left( \frac{ t_{l} }{ t_{i} } \right)^{ k/2 } \Delta t
\Big\}^{2}
-
\Big\{
\sum_{i=l}^{N}
\left(
\frac{ t_{l-1} }{ t_{i} }
\right)^{ k/2 } \Delta t
\Big\}^{2}
\Big] \\
&\hspace{15mm}-
k
\sum_{ l=1 }^{ N }
\Big\{
\sum_{i=l}^{N}
\frac{ ( t_{l-1} )^{ (k-1)/2 }  }{ (t_{i})^{ k/2 } } \Delta t
\Big\}^{2}
\Delta t.
\end{split}
\end{equation*}
For $ l \geq 2 $, we have
\begin{equation*}
\begin{split}
&\Big\{
\sum_{i=l}^{N}
\left(
\frac{ t_{l-1} }{ t_{i} }
\right)^{ k/2 } \Delta t
\Big\}^{2} \\
& =
\Big\{
\sum_{i=l-1}^{N}
\left(
\frac{ t_{l-1} }{ t_{i} }
\right)^{ k/2 } \Delta t
\Big\}^{2}
-2
\sum_{i=l-1}^{N}
\left(
\frac{ t_{l-1} }{ t_{i} }
\right)^{ k/2 } ( \Delta t )^{2}
+
( \Delta t )^{2} ,
\end{split}
\end{equation*}
and therefore, 
\begin{equation*}
\begin{split}
& \sum_{ l=2 }^{N}
\Big[
\Big\{
\sum_{i=l}^{N}
\left( \frac{ t_{l} }{ t_{i} } \right)^{ k/2 } \Delta t
\Big\}^{2}
-
\Big\{
\sum_{i=l}^{N}
\left(
\frac{ t_{l-1} }{ t_{i} }
\right)^{ k/2 } \Delta t
\Big\}^{2}
\Big] \\
&= \sum_{ l=2 }^{N}
\Big[
\Big\{
\sum_{i=l}^{N}
\left( \frac{ t_{l} }{ t_{i} } \right)^{ k/2 } \Delta t
\Big\}^{2}
-
\Big\{
\sum_{i=l-1}^{N}
\left(
\frac{ t_{l-1} }{ t_{i} }
\right)^{ k/2 } \Delta t
\Big\}^{2}\Big] \\
&\hspace{15mm} + 2 \sum_{ l=2 }^{N}
\sum_{i=l-1}^{N}
\left(
\frac{ t_{l-1} }{ t_{i} }
\right)^{ k/2 } ( \Delta t )^{2}
- N ( \Delta t )^{2}.
\end{split}
\end{equation*}
Using this, 
\begin{equation}
\begin{split}
&
Z_{N,k}
=%%%%%%%%
(\Delta t)^{2}
\if -
\Big\{
\sum_{i=1}^{N}
\left(
\frac{ t_{1} }{ t_{i} }
\right)^{ k/2 } \Delta t
\Big\}^{2} \fi
+
2
\sum_{ l=2 }^{ N } \sum_{i=l-1}^{N}
\left(
\frac{ t_{l-1} }{ t_{i} }
\right)^{ n/2 } ( \Delta t )^{2} \\
&\hspace{15mm}-
N ( \Delta t )^{2}
-
k
\sum_{ l=2 }^{ N}
\Big\{
\sum_{i=l}^{N}
\frac{ ( t_{l-1} )^{ (k-1)/2 }  }{ (t_{i})^{ k/2 } } \Delta t
\Big\}^{2}
\Delta t \\
&\leq%%%%%
2
\sum_{ l=2 }^{ N } \sum_{i=l-1}^{N}
\left(
\frac{ t_{l-1} }{ t_{i} }
\right)^{ k/2 } ( \Delta t )^{2}
-
k
\sum_{ l=1}^{ N }
\Big\{
\sum_{i=l}^{N}
\frac{ ( t_{l-1} )^{ (k-1)/2 }  }{ (t_{i})^{ k/2 } } \Delta t
\Big\}^{2}
\Delta t.
\label{EST-Z}
\end{split}
\end{equation}

We observe that
$$\displaystyle
2 \sum_{l=2}^{N} \sum_{i={l-1}}^{N}
\left( \frac{ t_{l-1} }{ t_{i} } \right)^{k/2}
( \Delta t )^{2}
$$
behaves like
$$\displaystyle
2 \int_{0}^{T} \hspace{-2mm} \int_{t}^{T}
\left( \frac{t}{s} \right)^{k/2} \hspace{-2mm} ds dt
$$
and
$$\displaystyle
k \sum_{l=1}^{N} \Big\{ \sum_{i=l}^{N}
\frac{ ( t_{l-1} )^{ (k-1)/2 } }{ ( t_{i} )^{k/2} } \Delta t
\Big\}^{2} \Delta t
$$
behaves like
$$\displaystyle
k \int_{0}^{T} \Big\{
\int_{t}^{T} \hspace{-1mm}
\frac{ t^{ (k-1)/2 } }{ s^{k/2} } ds
\Big\}^{2} dt
$$
as $N\to\infty$ respectively. We note that 
$$
2 \int_{0}^{T} \hspace{-2mm} \int_{t}^{T} \hspace{-2mm}
\left( \frac{t}{s} \right)^{k/2} \hspace{-2mm} ds dt
=
n \int_{0}^{T} \hspace{-2mm} \Big\{
\int_{t}^{T} \hspace{-1mm}
\frac{ t^{ (k-1)/2 } }{ s^{k/2} } ds
\Big\}^{2} dt
=
\left\{ \begin{array}{ll}
\displaystyle \frac{ T^{2} }{2} & \text{if $k=2$,} \\
\displaystyle \frac{ 2T^{2} }{k+2} & \text{if $k\geq 2$.}
\end{array}\right.
$$
Based on the observations, 
we estimate $Z_{N,k}$ by separating it into two terms;
\begin{equation*}
Z_{N,k} \leq Z_{N,k}^{1} + Z_{N,k}^{2}
\end{equation*}
where
\begin{equation*}
\begin{split}
Z_{N,k}^{1}
&:=
2
\sum_{ l=2 }^{ N } \sum_{i=l-1}^{N}
\left(
\frac{ t_{l-1} }{ t_{i} }
\right)^{ k/2 } ( \Delta t )^{2}
-
2 \int_{0}^{T} \hspace{-2mm} \int_{t}^{T} \hspace{-2mm}
\left( \frac{t}{s} \right)^{k/2} \hspace{-2mm} ds dt, \\
%%%%%%%%%%
Z_{N,k}^{2}
&:=
k \int_{0}^{T} \hspace{-2mm} \Big\{
\int_{t}^{T} \hspace{-1mm}
\frac{ t^{ (k-1)/2 } }{ s^{k/2} } ds
\Big\}^{2} dt
-
k
\sum_{ l=1 }^{ N }
\Big\{
\sum_{i=l}^{N}
\frac{ ( t_{l-1} )^{ (k-1)/2 }  }{ (t_{i})^{ k/2 } } \Delta t
\Big\}^{2}
\Delta t
\end{split}
\end{equation*}

We estimate each of them. Firstly, we have
\begin{equation}\label{est20}
\begin{split}
Z_{N,n}^{1}
&\leq%%%%%%%%
2 \sum_{l=2}^{N} \sum_{i=l-1}^{N-1}
\int_{ t_{l-2} }^{ t_{l-1} } \int_{ t_{i} }^{ t_{i+1} }
\Big\{
	\left( \frac{ t_{l-1} }{ t_{i} } \right)^{k/2}
	-
	\left( \frac{t}{s} \right)^{k/2}
\Big\}
ds dt \\
& \hspace{2cm} + 2 \sum_{l=2}^{N} \left( \frac{ t_{l-1} }{ t_{N} } \right)^{k/2}
(\Delta t)^2 \\
&\leq%%%%%%%%
2 \sum_{l=2}^{N} \sum_{i=l-1}^{N-1}
\int_{ t_{l-2} }^{ t_{l-1} } \int_{ t_{i} }^{ t_{i+1} }
\Big\{
	\left( \frac{ t_{l-1} }{ t_{i} } \right)^{k/2}
	-
	\left( \frac{ t_{l-2} }{ t_{i+1} } \right)^{k/2}
\Big\}
ds dt \\
& \hspace{2cm} + 2 \sum_{l=2}^{N} 
\left( \frac{ t_{l-1} }{ t_{N} } \right)^{k/2} \\
%& \hspace{2cm} + 2 \frac{T^2}{N} 
%\sum_{l=1}^{N-1} \left(\frac{l}{N}\right)^{k/2}  \frac{1}{N} \\ &\leq
%\frac{ 4 T^{2} }{ N }, 
&= 2 (\Delta t)^2 \sum_{l=2}^{N} \sum_{i=l-1}^{N-1}\left\{ \left( \frac{l-1}{i} \right)^{k/2} - \left( \frac{l-2}{i} \right)^{k/2} \right\} \\
&+ 2 (\Delta t)^2 \sum_{l=2}^{N} \sum_{i=l-1}^{N-1} \left\{
\left( \frac{l-2}{i} \right)^{k/2} 
-\left( \frac{l-2}{i+1} \right)^{k/2}\right\} \\
& \hspace{2cm} + 2 \sum_{l=2}^{N} 
\left( \frac{ t_{l-1} }{ t_{N} } \right)^{k/2}
(\Delta t)^2.
\end{split}
\end{equation}
By a bit of algebra, the last term in (\ref{est20}) is seen to be
\begin{equation}\label{est21}
2 (\Delta t)^2\sum_{l=2}^N 
\left\{ 1 + \left(\frac{l-1}{l}\right)^{k/2} \right\},
\end{equation}
which is bounded above by $4T^2/N $. 

Next, we estimate $ Z_{N,k}^2 $. 
We set
\begin{equation*}
\begin{split}
& I= 
\sum_{l=1}^{N} \int_{ t_{l-1} }^{ t_{l} } t^{k-1}
\Big\{
\int_{ t }^{ T } \hspace{-1mm}
\frac{ ds }{ s^{k/2} } 
\Big\}^{2} dt  \\
& \hspace{3cm} -
\sum_{l=1}^{N} \int_{ t_{l-1} }^{ t_{l} } \hspace{-2mm}
t^{k-1} 
\Big\{ \sum_{i=l}^{N}
\frac{ \Delta t }{ ( t_{i} )^{k/2} } 
\Big\}^{2} dt
\end{split}
\end{equation*}
and
\begin{equation*}
\begin{split}
& II = 
\sum_{l=1}^{N} \int_{ t_{l-1} }^{ t_{l} } \hspace{-2mm}
t^{k-1} 
\Big\{ \sum_{i=l}^{N}
\frac{ \Delta t }{ ( t_{i} )^{k/2} } 
\Big\}^{2} dt \\
& \hspace{3cm} 
- \sum_{l=1}^{N} \int_{ t_{l-1} }^{ t_{l} } \hspace{-2mm}
(t_{l-1})^{k-1} 
\Big\{ \sum_{i=l}^{N}
\frac{ \Delta t }{ ( t_{i} )^{k/2} } 
\Big\}^{2} dt.
\end{split}
\end{equation*}
Note that $ Z_{N,k}^{2} = k (I+II) $. 
For $ t_{l-1} \leq t \leq t_l $, $ l=1,\cdots, N $, we have
\begin{equation}\label{lowerest}
\begin{split}
&\int_{ t }^{ T } \hspace{-1mm}
\frac{ ds }{ s^{k/2} } -  \sum_{i=l}^{N}
\frac{ \Delta t }{ ( t_{i} )^{k/2} } \\
&= \sum_{i=l+1}^N \int_{t_{i-1}}^{t_i} \left(\frac{1}{s^{k/2}}
- \frac{1}{(t_i)^{k/2}} \right)\,ds 
+ \int_t^{t_l} \frac{ ds }{ s^{k/2} }
- \frac{\Delta t}{(t_l)^{k/2}} \geq 0,
\end{split}
\end{equation}
and 
\begin{equation*}
\begin{split}
& \sum_{i=l+1}^N \int_{t_{i-1}}^{t_i} \left(\frac{1}{s^{k/2}}
- \frac{1}{(t_i)^{k/2}} \right)\,ds \\
& \hspace{2cm}
\leq \sum_{i=l+1}^N \int_{t_{i-1}}^{t_i} \left(\frac{1}{(t_{i-1})^{k/2}}
- \frac{1}{(t_i)^{k/2}} \right)\,ds \\
& \hspace{2cm} = \Delta t \left( \frac{1}{(t_l)^{k/2}} - 
\frac{1}{(t_N)^{k/2}}
\right).
\end{split}
\end{equation*}
Combining these two, 
we have
\begin{equation*}
\begin{split}
& \left\{ \int_{ t }^{ T } \hspace{-1mm}
\frac{ ds }{ s^{k/2} } \right\}^2
-
\left\{ \sum_{i=l}^{N}
\frac{ \Delta t }{ ( t_{i} )^{k/2} }
\right\}^2 \\
& 
\leq \int_t^{t_l} \frac{ ds }{ s^{k/2} } \left(
\int_{ t }^{ T } \hspace{-1mm}
\frac{ ds }{ s^{k/2} } + \sum_{i=l}^{N}
\frac{ \Delta t }{ ( t_{i} )^{k/2} }
\right) 
\leq 2 \int_t^{t_l} \frac{ ds }{ s^{k/2} } \left(
\int_{ t }^{ T } \hspace{-1mm}
\frac{ ds }{ s^{k/2} }
\right) \\
&=
\left\{\begin{array}{ll}
\displaystyle
\frac{4}{k-2} (t^{1-\frac{k}{2}}- T^{1-\frac{k}{2}}) \int_t^{t_l} \frac{ ds }{ s^{k/2} } 
\leq
\frac{4}{k-2} t^{1-\frac{k}{2}}\int_t^{t_l} \frac{ ds }{ s^{k/2} }
&
\text{if $k\geq 3$,} \\
\displaystyle
2 \int_{t}^{t_{l}} \frac{ds}{s} \log \frac{T}{t}
&
\text{if $k=2$.}
\end{array}\right.
\end{split}
\end{equation*}
Then for $k\geq 3$,
\begin{equation}\label{estI}
\begin{split}
I
&\leq
\frac{4}{k-2}  
\sum_{l=1}^{N} \int_{t_{l-1}}^{t_l} \int_t^{t_l} 
\left( \frac{t}{s} \right)^{k/2}\,dsdt
\\
&\leq
\frac{4}{k-2}
\sum_{l=1}^{N} \int_{t_{l-1}}^{t_l} \int_t^{t_l} \hspace{-2mm} dsdt
= \frac{2}{k-2} \sum_{l=1}^{N} (t_l - t_{l-1})^2 
= \frac{2}{k-2} \frac{T^2}{N}
\end{split}
\end{equation}
and for $k=2$, we have
\begin{equation} \label{estI-2}
\begin{split}
I
&\leq
2 \sum_{l=1}^{N} \int_{ t_{l-1} }^{ t_{l} } \int_{t}^{t_{l}}
\frac{t}{s} \log \frac{T}{t} ds dt
\leq
2 \sum_{l=1}^{N} \int_{ t_{l-1} }^{ t_{l} }
\int_{t}^{t_{l}} \hspace{-2mm} ds
\hspace{1mm} \log \frac{T}{t} \hspace{1mm} dt \\
&\leq
2 \Delta t \sum_{l=1}^{N}
\Big\{
\Delta t \log T - \big[ t\log t - t \big]_{t=t_{l-1}+0}^{t=t_{l}}
\Big\}
=
\frac{2T^{2}}{N}.
\end{split}
\end{equation}

Now we turn to the estimate of $ II $.
By (\ref{lowerest}), for $k\geq 3$,
\begin{equation}\label{estII}
\begin{split}
II & \leq \sum_{l=1}^{N} \int_{ t_{l-1} }^{ t_{l} } \hspace{-2mm}
\left\{t^{k-1} - (t_{l-1})^{k-1} \right\}
 \left( \int_{ t }^{ T } \hspace{-1mm}
\frac{ ds }{ s^{k/2} } \right)^2 dt \\
& \leq \frac{4}{(k-2)^2}\sum_{l=1}^{N} \int_{ t_{l-1} }^{ t_{l} } \hspace{-2mm}
\left\{t^{k-1} - (t_{l-1})^{k-1} \right\}
t^{2-k}
 dt \\
&= \frac{4(k-1)}{(k-2)^2}
\sum_{l=1}^{N} \int_{ t_{l-1} }^{ t_{l} }
\int_{t_{l-1}}^t \left( \frac{s}{t}
\right)^{k-2}
ds dt \\
&\leq \frac{4(k-1)}{(k-2)^2}
\sum_{l=1}^{N} \int_{ t_{l-1} }^{ t_{l} }
\int_{t_{l-1}}^t \hspace{-3mm}
ds dt = \frac{2(k-1)}{(k-2)^2}\frac{T^2}{N}.
\end{split}
\end{equation}
For $k=2$, we have
\begin{equation} \label{estII-2}
\begin{split}
II
&\leq
\sum_{l=1}^{N} \int_{ t_{l-1} }^{ t_{l} } \hspace{-2mm}
( t - t_{l-1} )
\left( \int_{ t }^{ T } \hspace{-1mm}
\frac{ ds }{ s } \right)^2 dt \\
&\leq
\Delta t \sum_{l=1}^{N} \int_{ t_{l-1} }^{ t_{l} }
\left( \log \frac{T}{t} \right)^2 dt
=
\frac{2T^{2}}{N}.
\end{split}
\end{equation}

By (\ref{estI}), (\ref{estI-2}), (\ref{estII}) and (\ref{estII-2}), we have 
\begin{equation}\label{est22}
Z_{N,k}^2 \leq \frac{5 T^2}{N}. 
\end{equation}
Combining (\ref{est21}) and (\ref{est22}), 
we obtained (\ref{est30}). 

\end{proof}%%%%%%%%%%%%%%%%%%%%%%%%%%%%%%%%%%%%%%%%%%%%%%%%%

\begin{rem}%%%%%%%%%%%%%%%%%%%%%%%%%%%%%%%%%%%%%%%%%%%%%%%%%
A result by Ngo-Ogawa (\cite{NgOg}, Theorem 2.2.) tells us that
the sequence of processes
$$
\Big\{
	n^{3/4}
	\Big(
	\frac{1}{N} \sum_{i=0}^{[Nt]} 1_{[0,\infty)} (X_{i/N})
	-
	\int_{0}^{t} \hspace{-2mm}
	1_{[0,\infty )} (X_{s}) ds
	\Big)
\Big\}_{t\geq 0}
$$
is tight for a diffusion $X=(X_{t})_{t\geq 0}$ although their results are more general.
Moreover they say that this is optimal in $L^{2}$-sense in the case where $X$ is the standard
Brownian motion (see \cite{NgOg}, Proposition 2.3).
\end{rem}%%%%%%%%%%%%%%%%%%%%%%%%%%%%%%%%%%%%%%%%%%%%%%%%%%%

\if0%------------------------------------- COMMENTED OUT BEGIN -------
%%%%%%%%%%%%%%%%%%%%%%%%%%%%%%%%%%%%%%%%%%%%%%%%%%%%%%%%%%%%%%%%%%%%%%
\subsection{Error with Euler-Maruyama Approximation}%%%%%%%%%%%%%%%%%%
%%%%%%%%%%%%%%%%%%%%%%%%%%%%%%%%%%%%%%%%%%%%%%%%%%%%%%%%%%%%%%%%%%%%%%

We shall consider the following stochastic differential equation
\begin{equation}
\left\{\begin{array}{l}
dX_{t} = \sigma (X_{t}) dW_{t} + b(X_{t}) dt, \\
X_{0}=x_{0}
\end{array}\right.
\label{SDE}
\end{equation}
where $W=(W_{t})_{0\leq t\leq T}$ is a one-dimensional Brownian motion.
In the following, we assume that the stochastic differential equation (\ref{SDE})
has a unique strong solution which we denote by $X=(X_{t})_{0\leq t \leq T}$.

For each partition
$
\Delta = \Delta^{(N)} : 0=t_{0} < t_{1} < \cdots < t_{N} = T
$
where $t_{k}=\frac{kT}{N}$ for $k=0,1,\cdots ,N$, we introduce
the Euler-Maruyama approximation $X^{N} = (X_{t}^{N})_{0 \leq t \leq N}$
of the stochastic differential equation (\ref{SDE}), which is defined by
\begin{equation*}
\left\{\begin{array}{l}
dX_{t}^{N} = \sigma (X_{\phi_{N} (t)}^{N}) dW_{t} + b(X_{\phi_{N} (t)}^{N}) dt, \\
X_{0}=x_{0}
\end{array}\right.
\end{equation*}
where $\phi_{N}:[0,T] \to [0,T]$ is defined by
$$
\phi_{N}(t) = t_{k-1}
\quad \text{if $t_{k-1} \leq t < t_{k}$}
$$
for $k=1,2,\cdots ,N$. We will also write $X_{t}^{N}=X_{t}^{N}(x_{0})$
when we want to emphasize the initial state $x$.

%We further set
%\begin{equation*}
%\Delta X_{k}^{N} := X_{t_{k}} - X_{t_{k-1}} = \sigma ( X_{t_{k-1}} ) \Delta W_{k}
%\end{equation*}
%for $k=1,2,\cdots ,N$.

Suppose we are given a function $f:\mathbb{R} \to \mathbb{R}$.
Let us assume that $( f(X_{T}^{N}) )_{N=1}^{\infty}$ is a finite dimensional
approximation of $f(X_{T})$ (Although in general we would have to impose certain
conditions on $f$, $\sigma$ and $b$).

This motivates our investigation of
\begin{equation*}
\begin{split}
&\mathrm{Err}_{N} := f(X_{T}^{N}) \\
&-
\Big\{
\mathbf{E} \big[ f(X_{T}^{N}) \big]
+
\sum_{k=1}^{N} \mathbf{E} \big[
\big( \partial_{k} g_{N} \big) ( \Delta W_{1}, \cdots , \Delta W_{N} )
\big\vert \mathcal{G}_{k-1}^{N}
\big]
\Delta W_{k}
\Big\}
\end{split}
\end{equation*}
where $g_{N}:\mathbb{R}^{N} \to \mathbb{R}$ is such that
$
f(X_{T}^{N}) = g_{N} ( \Delta W_{1}, \cdots , \Delta W_{N} )
$
and can be computed explicitly.

Let us further prepare some notations. We denote by $q_{n}^{\Delta}(x,y)$
the $n$-step transition density of the Markov chain $(X_{t_{n}}^{N})_{n=0}^{N}$:
$$
q_{n}^{\Delta} (x,y)
:=
\int_{-\infty}^{\infty} \hspace{-4mm} q_{1}^{\Delta} (x,z_{1}) dz_{1}
\int_{-\infty}^{\infty} \hspace{-4mm} q_{1}^{\Delta} (z_{1},z_{2}) dz_{2}
\cdots
\int_{-\infty}^{\infty} \hspace{-4mm} q_{1}^{\Delta} (z_{n-2},z_{n-1}) dz_{n-1}
q_{1}^{\Delta} (z_{n-1}, y)
$$
where for each $x$, $y \in \mathbb{R}$,
$$
q_{1}^{\Delta} (x, y)
:=
\frac{
	\exp \Big\{ - \frac{ \big( y - ( x + b(x) \Delta t ) \big)^{2} }{ 2\sigma (x)^{2} \Delta t } \Big\}
}{
	\sqrt{ 2\pi \sigma (x)^{2} \Delta t }
}.
$$
Additionally, for $v>0$, $t>0$ and $x$, $y \in \mathbb{R}$ we set
$$
p_{t}^{v} (x,y)
:=
\frac{
	e^{ - \frac{ (y-x)^{2} }{ 2 v^{2} t } }
}{
	\sqrt{ 2\pi v^{2} t }
}
$$
which is the transition density function of $(vW_{t})_{0\leq t\leq T}$.

\begin{ass}\label{GAUSS}%%%%%%%%%%%%%%%%%%%%%%%%%%%%%%%%%%%%
There are constants $c>0$ and $C\geq 1$ such that
\begin{equation}
C^{-1} p_{t_{n}}^{c^{-1}}(x,y)
\leq
q_{n}^{\Delta} (x,y)
\leq
C p_{t_{n}}^{c}(x,y)
\end{equation}
for every $x$, $y \in \mathbb{R}$, $n=1,2,\cdots ,N$ and every partition
$\Delta :0=t_{0} < \cdots < t_{n} \equiv \frac{nT}{N} < \cdots < t_{N} = T$.
\end{ass}%%%%%%%%%%%%%%%%%%%%%%%%%%%%%%%%%%%%%%%%%%%%%%%%%%%
This assumption is assured if, for example, $\sigma$ is uniformly elliptic and
uniformly $\eta$-Holder continuous for some $\eta >0$ and
$b$ is bounded (see Lemaire-Menozzi \cite{LM}, Theorem 2.1.).

We shall keep the symbols $c>0$ and $C\geq 1$ given in the above assumption.
In the following, we can, without loss of generality, assume that $c\geq 1$
as one can always modify the constant $C$ if necessary.

\begin{thm}\label{ERR-EM}%%%%%%%%%%%%%%%%%%%%%%%%%
Let $B^{x_{0}} = (B_{t}^{x_{0}})_{0\leq t \leq T}$ be a one-dimensional
Brownian motion starting from $x_{0}$ and independent of $W$.
Under Assumption \ref{GAUSS}, suppose that there exists a constant $K>0$ such that
\begin{equation}
\begin{split}
&
\frac{ 1 }{ (t_{l-1})^{n} }
\int_{-\infty}^{\infty}
\mathbf{E} \big[
f \big( X_{T-t_{l}}^{N} ( cB_{t_{l-1}}^{x_{0}} + cy ) \big)
H_{n} \left( \frac{W_{t_{l-1}}}{\sqrt{t_{l-1}}} \right)
\big]^{2}
\frac{ e^{ - \frac{y^{2}}{2\Delta t} } }{ \sqrt{2\pi \Delta t} } dy \\
&\hspace{5mm}\leq
\frac{ K }{ (T)^{n} }
\int_{-\infty}^{\infty}
\mathbf{E} \big[
f ( cB_{T}^{x_{0}} + cy )
H_{n} \left( \frac{W_{T}}{\sqrt{T}} \right)
\big]^{2}
\frac{ e^{ - \frac{y^{2}}{2\Delta t} } }{ \sqrt{2\pi \Delta t} } dy
\label{COND-K}
\end{split}
\end{equation}
for each $l=1,2,\cdots ,N$, $n=2,3,\cdots$ and $N=1,2,\cdots$. If
$
f(cB_{T}^{x_{0}}) \in \mathbb{D}_{2,s}
$
for some $0 \leq s \leq 1$ then
$$
\Vert \mathrm{Err}_{N} \Vert_{L^{2}} = O(N^{-s/2}) 
\quad\text{as $N\to \infty$.}
$$

\end{thm}%%%%%%%%%%%%%%%%%%%%%%%%%%%%%%%%%%%%%%%%%
\begin{proof}%%%%%%%%%%%%%%%%%%%%%%%%%%%%%%%%%%%%%
Set
\begin{equation*}
\begin{split}
u_{l}(x)
:=
\int_{-\infty}^{\infty}
q_{l}^{\Delta} ( x, y ) dy f(y),
\quad \text{for $l=1,2,\cdots , N$ and $ x \in \mathbb{R}$}
\end{split}
\end{equation*}
which enables us to write
$
\mathbf{E} [ f(X_{T}^{N}) \vert \mathcal{G}_{l}^{N} ] = u_{N-l} (X_{t_{l}})
$
for each $l=1,2,\cdots ,N$.

By Theorem \ref{ACO2} and (\ref{DIBP}), we have
\begin{equation*}
\Vert \mathrm{Err}_{N} \Vert_{L^{2}}^{2}
=
\sum_{k=2}^{\infty} \sum_{l=1}^{N}
\frac{ (\Delta t)^{k} }{ k! }
\mathbf{E} \big[
\mathbf{E} \big[
\partial_{l}^{k} f(X_{T}^{N} )
\big\vert
\mathcal{G}_{l-1}^{N}
\big]^{2}
\big] .
\end{equation*}
Since the operators $\mathbf{E}[ \cdot \vert \mathcal{G}_{l}^{N} ]$ and $\partial_{l}$
commutes on $L^{2}(\mathcal{G}_{N}^{N})$, we have
$$
\mathbf{E} \big[
\partial_{l}^{k} f(X_{T}^{N} )
\big\vert
\mathcal{G}_{l-1}^{N}
\big]
=
\mathbf{E} \big[
u_{N-l}^{(k)} ( X_{t_{l}}^{N} )
\big\vert
\mathcal{G}_{l-1}^{N}
\big]
=
v_{k,N-l} ( X_{t_{l-1}}^{N} )
$$
for each $k=2,3,\cdots ,N$ and $l=1,2,\cdots ,N$ where
$$
v_{k,N-l} (x)
:=
\int_{-\infty}^{\infty} q_{1}^{\Delta} (x,y) dy u_{N-l}^{(k)} ( y ),
\quad x \in \mathbb{R}.
$$
By Assumption \ref{GAUSS} and Schwartz' inequality, we have
\begin{equation*}
\begin{split}
&
\mathbf{E} [ v_{k,N-l} ( X_{t_{l-1}}^{N} )^{2} ]
\leq
C \mathbf{E} [ v_{k,N-l} ( c B_{t_{l-1}}^{x_{0}} )^{2} ] \\
&=
C \mathbf{E} \big[
\int_{-\infty}^{\infty} p_{t_{l-1}}^{c} ( x_{0}, x) dx
\Big\{
	\int_{-\infty}^{\infty} q_{1}^{\Delta} (x,y) dy u_{N-l}^{(k)} ( y )
\Big\}^{2}
\big] \\
&\leq
C \mathbf{E} \big[
\int_{-\infty}^{\infty} p_{t_{l-1}}^{c} ( x_{0}, x) dx
\int_{-\infty}^{\infty} q_{1}^{\Delta} (x,y) dy \big\{ u_{N-l}^{(k)} ( y ) \big\}^{2}
\big] \\
&\leq
C^{2} \mathbf{E} \big[
\int_{-\infty}^{\infty} p_{t_{l}}^{c} ( x_{0}, y) dy
\big\{ u_{N-l}^{(k)} ( y ) \big\}^{2}
\big] \\
&\leq
C^{2} c^{k} \int_{-\infty}^{\infty} \frac{ e^{ - \frac{y^{2}}{2\Delta t} } }{ \sqrt{2\pi \Delta t} } dy
\mathbf{E} [
u_{N-l}^{(k)} ( cB_{t_{l-1}}^{x_{0}} + cy )^{2}
\big]
\end{split}
\end{equation*}
where in the last inequality, we have used the assumption $c\geq 1$.
Moreover we have
\begin{equation*}
\mathbf{E} [
u_{N-l}^{(k)} ( cB_{t_{l-1}}^{x_{0}} + cy )^{2}
\big]
=
\sum_{n=0}^{\infty}
\frac{ c^{n} (t_{l-1})^{n} }{n!}
\mathbf{E} [
u_{N-l}^{(k+n)} ( cB_{t_{l-1}}^{x_{0}} + cy )
\big]^{2}.
\end{equation*}
Putting it all together, we have
\begin{equation*}
\begin{split}
&
\Vert \mathrm{Err}_{N} \Vert_{L^{2}}^{2}
=
C^{2}
\int_{-\infty}^{\infty} \frac{ e^{ - \frac{y^{2}}{2\Delta t} } }{ \sqrt{2\pi \Delta t} } dy
\sum_{l=1}^{N} \sum_{k=2}^{\infty} \sum_{n=0}^{\infty}
\frac{ (\Delta t)^{k} (t_{l-1})^{n} }{ k! n! }
\mathbf{E} \big[
u_{N-l}^{(k+n)} ( cB_{t_{l-1}}^{x_{0}} + cy )
\big]^{2} \\
&=
C^{2}
\int_{-\infty}^{\infty} \frac{ e^{ - \frac{y^{2}}{2\Delta t} } }{ \sqrt{2\pi \Delta t} } dy
\sum_{n=2}^{\infty} \frac{1}{n!} \\
&\hspace{15mm} \times
\sum_{l=1}^{N}
\Big\{
(t_{l})^{n} - (t_{l-1})^{n} - n ( \Delta t ) (t_{l-1})^{n-1}
\Big\}
c^{n}
\mathbf{E} \big[
u_{N-l}^{(n)} ( cB_{t_{l-1}}^{x_{0}} + cy )
\big]^{2}.
\end{split}
\end{equation*}
By our assumption in (\ref{COND-K}), we obtain
\begin{equation*}
\begin{split}
&
\int_{-\infty}^{\infty}
c^{n}
\mathbf{E} \big[
u_{N-l}^{(n)} ( cB_{t_{l-1}}^{x_{0}} + cy )
\big]^{2}
\frac{ e^{ - \frac{y^{2}}{2\Delta t} } }{ \sqrt{2\pi \Delta t} } dy \\
&=
\frac{ n! }{ (t_{l-1})^{n} }
\int_{-\infty}^{\infty}
\mathbf{E} \big[
u_{N-l} ( cB_{t_{l-1}}^{x_{0}} + cy )
H_{n} \left( \frac{ W_{t_{l-1}} }{ \sqrt{t_{l-1}} } \right)
\big]^{2}
\frac{ e^{ - \frac{y^{2}}{2\Delta t} } }{ \sqrt{2\pi \Delta t} } dy \\
&=
\frac{ n! }{ (t_{l-1})^{n} }
\int_{-\infty}^{\infty}
\mathbf{E} \big[
f \big( X_{T-t_{l}}^{N} ( cB_{t_{l-1}}^{x_{0}} + cy ) \big)
H_{n} \left( \frac{W_{t_{l-1}}}{\sqrt{t_{l-1}}} \right)
\big]^{2}
\frac{ e^{ - \frac{y^{2}}{2\Delta t} } }{ \sqrt{2\pi \Delta t} } dy \\
&\leq
\frac{K n! }{ (T)^{n} }
\int_{-\infty}^{\infty}
\mathbf{E} \big[
f ( cB_{T}^{x_{0}} + cy )
H_{n} \left( \frac{W_{T}}{\sqrt{T}} \right)
\big]^{2}
\frac{ e^{ - \frac{y^{2}}{2\Delta t} } }{ \sqrt{2\pi \Delta t} } dy .
\end{split}
\end{equation*}
Hence we have
\begin{equation*}
\begin{split}
&
\Vert \mathrm{Err}_{N} \Vert_{L^{2}}^{2} \\
&\leq
K C^{2}
\int_{-\infty}^{\infty} \frac{ e^{ - \frac{y^{2}}{2\Delta t} } }{ \sqrt{2\pi \Delta t} } dy
\sum_{n=2}^{\infty} n
\Big\{
\frac{1}{n} - \frac{1}{N} \sum_{l=0}^{N} \left( \frac{l}{N} \right)^{n-1}
\Big\}
\mathbf{E} \big[
f ( cB_{T}^{x_{0}} + cy )
H_{n} \left( \frac{W_{T}}{\sqrt{T}} \right)
\big]^{2} \\
&\leq
K C^{2} N^{-s}
\int_{-\infty}^{\infty}
\big\Vert f(cB_{T}^{x_{0}} + cy) \big\Vert_{\mathbb{D}_{2,s}}^{2}
\frac{ e^{ - \frac{y^{2}}{2\Delta t} } }{ \sqrt{2\pi \Delta t} } dy .
\end{split}
\end{equation*}
\end{proof}%%%%%%%%%%%%%%%%%%%%%%%%%%%%%%%%%%%%%%%

\begin{rem}%%%%%%%%%%%%%%%%%%%%%%%%%%%%%%%%%%%%%%%%%%%%%%%%%
A result by Ngo-Ogawa (\cite{NgOg}, Theorem 2.2.) tells us that
the sequence of processes
$$
\Big\{
	n^{3/4}
	\Big(
	\frac{1}{N} \sum_{i=0}^{[Nt]} 1_{[0,\infty)} (X_{i/N})
	-
	\int_{0}^{t} \hspace{-2mm}
	1_{[0,\infty )} (X_{s}) ds
	\Big)
\Big\}_{t\geq 0}
$$
is tight for a diffusion $X=(X_{t})_{t\geq 0}$ although their result states more general.
Moreover they say that this is optimal in $L^{2}$-sense in the case where $X$ is a standard
Brownian motion (see \cite{NgOg}, Proposition 2.3).
\end{rem}%%%%%%%%%%%%%%%%%%%%%%%%%%%%%%%%%%%%%%%%%%%%%%%%%%%

\fi%------------------------------------- COMMENTED OUT END ----------

%%%%%%%%%%%%%%%%%%%%%%%%%%%%%%%%%%%%%%%%%%%%%%%%%%%%%%%%%%%%
%\input{old-NonUnif.tex}*** OLD VERSION of the subsection ``Non-Uniform discretization case"
%\input{old-ErrWithEMAppr.tex}*** OLD VERSION of the subsection ASYMP
%\input{old-Appdx.tex}*** OLD VERSION of the subsection ``Appendix"
%\input{old-ActApprx.tex}*** OLD VERSION of the subsection ``Two Conceivable Finite-Dim. Apprx."
%\input{old-Poisson.tex}*** OLD VERSION of the subsection ``Poissonian Case"
%%%%%%%%%%%%%%%%%%%%%%%%%%%%%%%%%%%%%%%%%%%%%%%%%%%%%%%%%%%%
%%%%%%%%%%%%%%%%%%%%%%%%%%%%%%%%%%%%%%%%%%%%%%%%%%%%%%%%%%%%
%%%%%%%%%%%%%%% REFERENCES %%%%%%%%%%%%%%%%%%%%%%%%%%%%%%%%%
%%%%%%%%%%%%%%%%%%%%%%%%%%%%%%%%%%%%%%%%%%%%%%%%%%%%%%%%%%%%

\end{document}